\RequirePackage{fix-cm}
\documentclass{amsart} 
\usepackage[utf8]{inputenc}
\usepackage{mathptmx}
\usepackage{latexsym}
\usepackage{amsmath}
\usepackage{amssymb}
\usepackage{amsfonts} 
\usepackage{eucal} 
\usepackage{amsbsy}
\usepackage{amsthm}
\usepackage[all]{xy}
\usepackage{hyperref}
\usepackage{graphicx}

\textwidth13cm
\oddsidemargin1.3cm
\evensidemargin1.3cm

\newtheorem{thm}{Theorem}[section]
\newtheorem*{thm*}{Theorem}
\newtheorem{lemma}[thm]{Lemma}
\newtheorem{prop}[thm]{Proposition}

\newtheorem*{cor*}{Corollary}

\theoremstyle{definition}
\newtheorem{defn}[thm]{Definition}
\newtheorem{example}[thm]{Example}

\theoremstyle{remark}

\newcommand {\mbl}   {\ensuremath{\mathbb{L}}}

\newcommand {\real}  {\ensuremath{\mathbb{R}}}
\newcommand {\intg}  {\ensuremath{\mathbb{Z}}}

\newcommand {\rat}   {\ensuremath{\mathbb{Q}}}

\newcommand {\Smash} {\ensuremath{\wedge}}

\newcommand {\pro}   {\ensuremath{\operatorname{pr}}}

\newcommand {\redh}  {\ensuremath{\widetilde{H}}}

\newcommand {\syml}  {\ensuremath{\mathbb{L}^{\bullet}}}
\newcommand {\redsyml}  {\ensuremath{\widetilde{\mathbb{L}^\bullet}}}

\newcommand {\cone}  {\ensuremath{\operatorname{cone}}}

\newcommand {\id}    {\ensuremath{\operatorname{id}}}

\newcommand {\BSPL}   {\ensuremath{\operatorname{BSPL}}}

\newcommand {\BBSPL}   {\ensuremath{\operatorname{B}\widetilde{\operatorname{SPL}}}}

\newcommand {\BSTOP}   {\ensuremath{\operatorname{BSTOP}}}

\newcommand {\MSO}   {\ensuremath{\operatorname{MSO}}}

\newcommand {\MSPL}   {\ensuremath{\operatorname{MSPL}}}

\newcommand {\RMSPL}   {\ensuremath{\widetilde{\operatorname{MSPL}}}}
\newcommand {\MSTOP}   {\ensuremath{\operatorname{MSTOP}}}

\newcommand {\RMSTOP}   {\ensuremath{\widetilde{\operatorname{MSTOP}}}}

\newcommand {\SPL}   {{\ensuremath{\operatorname{SPL}}}}
\newcommand {\PL}   {{\ensuremath{\operatorname{PL}}}}

\newcommand {\STOP}   {{\ensuremath{\operatorname{STOP}}}}

\newcommand {\TOP}   {{\ensuremath{\operatorname{TOP}}}}

\newcommand {\PLB}   {{\ensuremath{\operatorname{PLB}}}}

\newcommand {\Witt}   {{\ensuremath{\operatorname{Witt}}}}

\newcommand {\MWITT}   {\ensuremath{\operatorname{MWITT}}}

\newcommand {\Th}   {\ensuremath{\operatorname{Th}}}

\newcommand {\PB}   {{\ensuremath{\operatorname{PB}}}}


\begin{document}


\title[Bundle Transfer of L-Orientations for Singular Spaces]
  {Bundle Transfer of L-Homology Orientation Classes for Singular Spaces}

\author{Markus Banagl}

\address{Mathematisches Institut, Universit\"at Heidelberg,
  Im Neuenheimer Feld 205, 69120 Heidelberg, Germany}

\email{banagl@mathi.uni-heidelberg.de}

\thanks{This work is funded by the
 Deutsche Forschungsgemeinschaft (DFG, German Research Foundation)
 -- research grant 495696766.}

\date{April, 2022}

\subjclass[2010]{55N33, 55R12, 57Q20, 57Q50, 57R20, 57N80}

\keywords{Intersection Homology, Stratified Spaces, Transfer Homomorphisms, 
 Characteristic Classes, Algebraic L-Theory, Block Bundles, Cobordism}


\begin{abstract}
We consider transfer maps on ordinary homology, bordism of
singular spaces and homology with coefficients in Ranicki's
symmetric L-spectrum, associated to
block bundles with closed oriented PL manifold fiber
and compact polyhedral base. 
We prove that if the base polyhedron
is a Witt space, for example a pure-dimensional compact complex algebraic
variety, then the symmetric L-homology orientation of the base,
constructed by Laures, McClure and the author, transfers
to the L-homology orientation of the total space.
We deduce from this that
the Cheeger-Goresky-MacPherson L-class of the
base transfers to the product of the L-class of the total space
with the cohomological L-class of the stable vertical
normal microbundle.  
\end{abstract}

\maketitle


\tableofcontents


\section{Introduction}

To a fiber bundle $p:X\to B$ whose structure group
is a compact Lie group acting smoothly on the compact smooth 
$d$-dimensional manifold fiber $F$,
and whose base space $B$ is a finite complex,
Becker and Gottlieb associate in \cite{beckergottlieb}
a transfer homomorphism
$p^!: H_n (B) \to H_{n+d} (X)$.
Boardman discusses this transfer and several closely related
constructions such as the Umkehr map and pullback transfers
in \cite{boardman}.
Let $L^* (\alpha)$ denote the cohomological Hirzebruch $L$-class of a 
vector bundle $\alpha$, and for a smooth closed oriented manifold $M$
with tangent bundle $TM$, let $L_* (M) \in H_* (M;\rat)$
denote the Poincar\'e dual of $L^* (TM)$. Suppose that $F$ is oriented
and the structure group of $p$ acts in an orientation preserving manner.
If the base $B$ of the fiber bundle is a smooth closed oriented 
manifold $M$, then 
\begin{equation} \label{equ.transferoflmformfds}
p^! L_* (M) = L^* (T_p)^{-1} \cap L_* (X), 
\end{equation}
where $T_p$ is the vertical tangent bundle of $p$.
This is a straightforward consequence of the bundle isomorphism
$TX \cong p^* TM \oplus T_p$, naturality and the Whitney sum formula
for the cohomological $L$-class, multiplicative properties of the
transfer, and the fact that $p^!$ maps the fundamental class of the base
to the fundamental class of the total space.\\

If the base $B$ is a singular pseudomanifold, then the above argument
does not apply. On the other hand, intersection homology methods still allow
for the construction of a homological $L$-class
$L_* (B) \in H_* (B;\rat)$ for many types of compact pseudomanifolds $B$:
In the case where $B$ allows for a stratification with only
even-codimensional strata, for example a pure-dimensional compact complex algebraic
variety, $L_* (B)$ has been defined by Goresky and MacPherson in
\cite{gmih1}. This construction has been extended by Siegel \cite{siegel}
to Witt spaces, i.e.
oriented polyhedral pseudomanifolds that may have strata of odd codimension
such that the middle-dimensional middle-perversity rational intersection homology 
of the corresponding links vanishes.
In \cite{banagl-mem}, \cite{banagl-lcl}, 
the author has yet more generally defined $L_* (B)$
for topologically stratified spaces $B$ 
that allow for Lagrangian structures along strata of
odd codimension. A local definition of $L$-classes for triangulated pseudomanifolds
with piecewise flat metric was given by Cheeger \cite{cheeger} in terms of
$\eta$-invariants of links. As for manifolds, the $L$-class of singular spaces plays
an important role in the topological classification of such spaces, as shown
by Cappell and Weinberger in \cite{cw2} and by Weinberger in \cite{weinberger}.\\

Let $F$ be a closed oriented $d$-dimensional PL manifold, $B$ a compact polyhedron
and $\xi$ an oriented PL $F$-block bundle over $B$ (Casson \cite{casson}).
Oriented PL $F$-fiber bundles $p: X\to B$ are a special case. Block bundles,
and hence the results of this paper, do not require a 
locally trivial projection map $p$.
Then $\xi$ still admits a transfer homomorphism
\[ \xi^!: H_n (B) \to H_{n+d} (X), \]
see Ebert and Randal-Williams
\cite{ebertrandal}, and Section \ref{sec.bundletransferstable} of this paper.
Furthermore, $\xi$ possesses a stable vertical normal
PL microbundle $\nu_\xi$, see
Hebestreit, Land, L\"uck, Randal-Williams \cite{hllrw}
and Section \ref{sec.stableverticalblockbundles} of this paper.
In the present paper we develop methods that yield, 
among other results, formula (\ref{equ.transferoflmformfds})
for $F$-block bundles over Witt spaces $B$
(Theorem \ref{thm.lclassbundletransfer}):

\begin{thm*}
Let $B$ be a closed Witt space (e.g. a pure-dimensional
compact complex algebraic variety)
and let $F$ be a closed oriented PL manifold.
Let $\xi$ be an oriented PL $F$-block bundle over $B$
with total space $X$ and
oriented stable vertical normal microbundle $\nu_\xi$ over $X$.
Then $X$ is a Witt space and 
the associated block bundle transfer $\xi^!$ sends the
Cheeger-Goresky-MacPherson-Siegel $L$-class of $B$ to the product
\begin{equation} \label{equ.mainbundletransferformulaintro} 
\xi^! L_* (B) = L^* (\nu_\xi) \cap L_* (X). 
\end{equation}
\end{thm*}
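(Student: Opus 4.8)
The plan is to deduce the rational formula (\ref{equ.mainbundletransferformulaintro}) from the integral assertion that the block bundle transfer carries the canonical symmetric $\syml$-homology orientation of the Witt space $B$ to that of $X$, namely $\xi^![B]_{\syml}=[X]_{\syml}$ in $H_{n+d}(X;\syml)$ with $n=\dim B$ and $d=\dim F$, which is the central orientation-transfer theorem of the paper. The passage from this identity to (\ref{equ.mainbundletransferformulaintro}) will be effected by the rational $L$-genus together with a Riemann--Roch comparison between the $\syml$-homology transfer and the ordinary rational transfer.

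First I would check that $X$ is a Witt space of dimension $n+d$. Around a point of an open stratum $S\subseteq B$ of codimension $c$ with link $L$, the base has a distinguished neighborhood PL homeomorphic to $\mathbb{R}^{\dim S}\times\oc L$, over which $\xi$ is trivial; hence $X$ is locally PL homeomorphic to $\mathbb{R}^{\dim S}\times\oc L\times F$. Since $F$ is a closed PL manifold, this shows that the strata of $X$ are exactly the preimages of the strata of $B$, with unchanged links $L$ and unchanged codimensions $c$, that $X$ has no codimension-one stratum, and that $X$ is oriented because $B$, $F$ and $\xi$ are. In particular the middle-dimensional middle-perversity rational intersection homology of every link of $X$ vanishes, so $X$ is Witt and possesses both a Cheeger--Goresky--MacPherson--Siegel class $L_*(X)\in H_*(X;\rat)$ and, by the construction of Laures, McClure and the author, a canonical orientation $[X]_{\syml}\in H_{n+d}(X;\syml)$.

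Recall that $\syml$ rationalizes to $\prod_{k\ge 0}\Sigma^{4k}H\rat$, giving for every polyhedron $Y$ a natural isomorphism $\lambda_Y\colon H_m(Y;\syml)\otimes\rat\xrightarrow{\ \cong\ }\bigoplus_{k\ge 0}H_{m-4k}(Y;\rat)$ normalized so that $\lambda_B([B]_{\syml})=L_*(B)$ and $\lambda_X([X]_{\syml})=L_*(X)$. The transfer $\xi^!$ constructed in Section \ref{sec.bundletransferstable} is the Pontryagin--Thom collapse $B_+\to X^{\nu_\xi}$ associated to a fibrewise embedding of $X$, followed by the Thom isomorphism of the stable vertical normal microbundle $\nu_\xi$ of Section \ref{sec.stableverticalblockbundles}; the collapse does not depend on the coefficient spectrum, so the ordinary rational transfer and the $\syml$-homology transfer differ only in the Thom class used for $\nu_\xi$. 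Invoking Ranicki's description of the canonical $\syml$-orientation of an oriented PL microbundle $\alpha$ — which under $\lambda$ corresponds to $L^*(\alpha)^{-1}$ times the ordinary Thom class — one obtains the Riemann--Roch identity
\begin{equation*}
\xi^!\bigl(\lambda_B(z)\bigr)=L^*(\nu_\xi)\cap\lambda_X\bigl(\xi^!(z)\bigr),\qquad z\in H_n(B;\syml)\otimes\rat,
\end{equation*}
the left transfer being the ordinary rational one and the right transfer the $\syml$-homology one. Taking $z=[B]_{\syml}$ and substituting the orientation-transfer identity $\xi^![B]_{\syml}=[X]_{\syml}$ yields
\[
\xi^!L_*(B)=\xi^!\bigl(\lambda_B([B]_{\syml})\bigr)=L^*(\nu_\xi)\cap\lambda_X([X]_{\syml})=L^*(\nu_\xi)\cap L_*(X),
\]
which is (\ref{equ.mainbundletransferformulaintro}); the compatibility with the smooth fibre bundle formula (\ref{equ.transferoflmformfds}) is just the stable identity $L^*(\nu_\xi)=L^*(T_p)^{-1}$.

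The main obstacle is the orientation-transfer theorem used here, and to a lesser extent the two characteristic-class inputs. I would prove the orientation transfer in Witt bordism: pulling back $\xi$ along a Witt cycle $g\colon W\to B$ gives an $F$-block bundle over $W$ whose total space is Witt by the link argument above, hence a geometric transfer $\omwt(B)\to\omwt(X)$ which visibly sends the tautological class $[\id_B]$ to $[\id_X]$; this transfer is compatible, through the symmetric-signature map $\MWITT\to\syml$, with the $\syml$-homology transfer, and since $[B]_{\syml}$ and $[X]_{\syml}$ are the images of $[\id_B]$ and $[\id_X]$ the identity $\xi^![B]_{\syml}=[X]_{\syml}$ follows. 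The delicate points in that argument are the compatibility of this geometric transfer with the Pontryagin--Thom model of $\xi^!$ on $\syml$-homology — a Chern--Hirzebruch--Serre type multiplicativity of the visible symmetric signature for block bundles with closed PL manifold fibre — and, for the Riemann--Roch step, nailing down that the rationalized $\syml$-Thom class of a PL microbundle is the inverse of its Hirzebruch $L$-class times the ordinary Thom class.
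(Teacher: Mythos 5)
Your proposal follows essentially the same route as the paper: Witt-ness of $X$, the geometric pullback transfer on Witt bordism sending $[B]_\Witt$ to $[X]_\Witt$, its compatibility with the homotopy-theoretic transfer and with $\tau:\MWITT\to\syml(\rat)$ to get $\xi^![B]_\mbl=[X]_\mbl$, and then rationalization using $[B]_\mbl\otimes\rat=L_*(B)$ together with Ranicki's identity $u_\mbl(\mu)\otimes\rat=\rho^*L^*(\mu)^{-1}\cup u_\rat(\mu)$, which is exactly the paper's Riemann--Roch step. The two points you flag as delicate are precisely where the paper invests its technical effort (the mock-bundle/BRS Thom class comparison of the pullback transfer with the Pontrjagin--Thom transfer in Section \ref{sec.pullbacktransfer}); the only small corrections are that the orientation class of a Witt space lives in $\syml(\rat)$-homology rather than integral $\syml$-homology, and that Witt-ness of $X$ is obtained in the paper from the mock-bundle lemma (Lemma \ref{lem.mockbundleoverwittiswitt}) rather than from local triviality, since a block bundle is not locally trivial --- your local product description is only justified because block bundles are trivial, up to equivalence, over contractible subcomplexes such as closed stars.
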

Note that since the cohomological class $L^* (\nu_\xi)$ is invertible,
this formula yields a method for computing the Cheeger-Goresky-MacPherson
$L$-class of the total space.

Our method of proof rests on the geometric description of 
PL cobordism provided by Buoncristiano, Rourke and Sanderson
in \cite{buonrs} in terms of mock bundles. We construct a transfer
$\xi^!: E_n (B) \to E_{n+d} (X)$ for any module spectrum
$E$ over the Thom spectrum $\MSPL$ of oriented PL-bundle theory.
In addition to the transfer on ordinary homology,
this yields transfer homomorphisms on Ranicki's homology with coefficients
in the symmetric $\syml$-spectrum and on Witt bordism theory, $\Omega^\Witt_*$.
We describe the latter transfer geometrically as a pullback transfer
and use this together with mock bundle theory to show that the 
Witt bordism transfer sends
the fundamental class $[B]_\Witt \in \Omega^\Witt_* (B)$ to the
fundamental class $[X]_\Witt \in \Omega^\Witt_* (X)$
(Proposition \ref{prop.transferofwittfundclass}).
Using the work of Laures, McClure and the author (\cite{blm}),
which provides a map of ring spectra
$\MWITT \to \syml (\rat)$, where $\MWITT$ represents Witt-bordism,
as well as a fundamental class $[B]_\mbl \in \syml (\rat)_* (B)$,
we then show (Theorem \ref{prop.gysinpreserveslhomfundclass}):

\begin{thm*} 
Let $B$ be a closed Witt space of dimension $n$
and let $F$ be a closed oriented PL manifold of dimension $d$.
Let $\xi$ be an oriented PL $F$-block bundle over $B$
with total space $X$.
Then the $\syml$-homology block bundle transfer 
\[ \xi^!:  \syml (\rat)_n (B) \longrightarrow
         \syml (\rat)_{n+d} (X) \]
maps the $\syml (\rat)$-homology fundamental class of $B$ to the
$\syml (\rat)$-homology fundamental class of $X$,
\[ \xi^! [B]_\mathbb{L} = [X]_\mathbb{L}. \]         
\end{thm*}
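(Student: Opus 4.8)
The plan is to deduce this from the corresponding statement for Witt bordism (Proposition \ref{prop.transferofwittfundclass}) by using that the block bundle transfer $\xi^!$ was defined uniformly for all module spectra over $\MSPL$ and is therefore natural in the coefficient spectrum. Recall from \cite{blm} that there is a map of ring spectra $\phi\colon \MWITT \to \syml(\rat)$ and that, for a closed Witt space $Y$, the $\syml(\rat)$-homology fundamental class is by construction the image of the Witt-bordism fundamental class, $[Y]_\mbl = \phi_*[Y]_\Witt$, under $\phi_*\colon \MWITT_n(Y) = \Omega^\Witt_n(Y) \to \syml(\rat)_n(Y)$. Since $B$ is a closed Witt space and $F$ is a closed PL manifold, the total space $X$ of $\xi$ is again a closed Witt space (as recorded in the proof of Theorem \ref{thm.lclassbundletransfer}), so $[X]_\Witt$ and $[X]_\mbl = \phi_*[X]_\Witt$ are both defined.

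First I would check that $\phi$ is a morphism of $\MSPL$-module spectra. Both $\MWITT$ and $\syml(\rat)$ are $\MSPL$-algebras: a closed oriented PL manifold, with its trivial stratification, is a Witt space, and Cartesian product with such a manifold preserves the Witt condition, giving a ring spectrum map $\MSPL \to \MWITT$; composing with $\phi$ equips $\syml(\rat)$ with its $\MSPL$-algebra structure, and since $\phi$ is a ring spectrum map it is in particular $\MSPL$-linear. Consequently, by the naturality of the block bundle transfer in the coefficient module spectrum built into the construction of Section \ref{sec.bundletransferstable} — the map $\xi^!\colon E_n(B) \to E_{n+d}(X)$ arises from a fixed geometric datum attached to $\xi$ together with the $\MSPL$-action on $E$, so any $\MSPL$-module map intertwines the transfers — there is a commutative square
\[ \xymatrix{ \MWITT_n(B) \ar[r]^-{\xi^!} \ar[d]_-{\phi_*} & \MWITT_{n+d}(X) \ar[d]^-{\phi_*} \\ \syml(\rat)_n(B) \ar[r]^-{\xi^!} & \syml(\rat)_{n+d}(X). } \]

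The proof is then completed by chasing $[B]_\Witt$ around this square. Going across the top and then down gives $\phi_*\bigl(\xi^![B]_\Witt\bigr) = \phi_*[X]_\Witt = [X]_\mbl$, where the first equality is Proposition \ref{prop.transferofwittfundclass} and the second is the defining property of the $\syml(\rat)$-fundamental class from \cite{blm}. Going down the left column and then across the bottom gives $\xi^!\bigl(\phi_*[B]_\Witt\bigr) = \xi^![B]_\mbl$. Commutativity yields $\xi^![B]_\mbl = [X]_\mbl$, as asserted.

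The main obstacle is the second step: making the phrase ``natural in the coefficient module spectrum'' precise enough that it applies to $\phi$, and in particular verifying that the mock-bundle transfer of Section \ref{sec.bundletransferstable} specialized to $E = \MWITT$ literally agrees with the geometric pullback transfer on $\Omega^\Witt_*$ used in Proposition \ref{prop.transferofwittfundclass}, so that the latter may be inserted into the diagram; one must also confirm that the a priori intrinsically defined class $[X]_\mbl$ coincides with $\phi_*[X]_\Witt$, which is where the functoriality of the $\syml(\rat)$-homology fundamental class established in \cite{blm} enters. Both are matters of unwinding the constructions rather than of new ideas, but they are the points at which the argument must be handled with care.
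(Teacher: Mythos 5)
Your proposal is correct and follows essentially the same route as the paper: transfer of the Witt fundamental class (Proposition \ref{prop.transferofwittfundclass}), compatibility of the transfer with $\tau_*\colon \MWITT \to \syml(\rat)$ deduced from the fact that this is an $\MSPL$-module morphism (exactly the paper's Lemma \ref{lem.phitau} and Proposition \ref{prop.transftaucomm}), and the definition $[X]_\mbl = \tau_*[X]_\Witt$. The step you flag but leave open --- that the spectrum-level transfer for $E=\MWITT$ agrees with the geometric pullback transfer on $\Omega^\Witt_*$ --- is precisely Proposition \ref{prop.transfchiischitransf} (resting on Propositions \ref{prop.geopullbtransfistpcapu} and \ref{prop.htpythomhomisgeothom}), so it is available to cite; note, however, that its proof is the technical heart of Section \ref{sec.pullbacktransfer} rather than a routine unwinding of the constructions.
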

The result on Cheeger-Goresky-MacPherson $L$-classes is then deduced from an explicit formula for
the transfer by tensoring with the rationals.
For a PL $F$-fiber bundle $p:X\to B$ over a PL manifold base $B$,
the formula 
\[ p^! [B]_\mathbb{L} = [X]_\mathbb{L} \in \syml (\intg)_{n+d} (X) \]
was stated by L\"uck and Ranicki in \cite{lueckranicki}.
The behavior of the $L$-class for singular spaces under
transfers associated to finite degree covering projections has already been
clarified in \cite{banaglcovertransfer}, where we showed that for a
closed oriented Whitney stratified pseudomanifold $B$ admitting Lagrangian structures
along strata of odd codimension (e.g. $B$ Witt)
and $p: X \to B$ an orientation preserving covering map of finite degree,
the $L$-class of $B$ transfers to the $L$-class of the cover, i.e.
\[ p^! L_* (B) = L_* (X). \]
For the Witt case, this is from the perspective of the present paper
a special case of (\ref{equ.mainbundletransferformulaintro}).

An inclusion $g: Y \hookrightarrow X$ of stratified spaces is called
normally nonsingular if $Y$ possesses a tubular neighborhood in $X$
that can be equipped with the structure of a real vector bundle;
see e.g. \cite{gmsmt}, \cite{banaglnyjm}.
An oriented normally nonsingular inclusion $g$ 
of real codimension $c$ has a Gysin map 
\[ g^!: H_* (X;\rat) \longrightarrow H_{*-c} (Y;\rat) \]
on ordinary homology, 
\[ g^!: \syml (\rat)_* (X) \longrightarrow \syml_{*-c} (\rat) (Y) \]
on $\syml (\rat)$-homology, and
\[ g^!: \Omega^\Witt_* (X) \longrightarrow \Omega^\Witt_{*-c} (Y) \]
on Witt bordism.
In \cite{banaglnyjm}, we showed that if $g$ is
a normally nonsingular inclusion of closed
oriented even-dimensional piecewise-linear Witt pseudomanifolds,
for example pure-dimensional compact complex algebraic varieties, then
\begin{equation} \label{equ.gysinlformula}
g^! L_* (X) = L^* (\nu_g) \cap L_* (Y), 
\end{equation}
\[   g^! [X]_\mathbb{L} = [Y]_\mathbb{L},~
   g^! [X]_\Witt = [Y]_\Witt \]
where $\nu_g$ is the normal bundle of $g$. These formulae have been
applied in \cite{banaglnyjm} to compute the Cheeger-Goresky-MacPherson
$L$-class of certain singular Schubert varieties. No previous computations
of such classes seem to be available in the literature.
Together with the bundle transfer formula 
(\ref{equ.mainbundletransferformulaintro}), this makes it possible
to compute the transfer of the Cheeger-Goresky-MacPherson $L$-class
associated to a normally nonsingular
map, i.e. a map which can be
factored as a composition of a normally nonsingular inclusion,
followed by the projection of an oriented PL $F$-fiber bundle $\xi$
with closed PL manifold fiber $F$, see Section 
\ref{sec.normnonsingmaps}.

For complex algebraic, possibly singular, varieties $X$, Brasselet, Sch\"urmann and Yokura
introduced in \cite{bsy} Hodge-theoretic intersection Hirzebruch characteristic classes
$IT_{y,*} (X)$, which agree with $L_* (X)$ for $y=1$ and $X$ nonsingular
or, more generally, a rational homology manifold \cite{bobadilla}.
Using results of Sch\"urmann \cite{schuermannspecial}
and Maxim and Sch\"urmann \cite{msinschubertvar},
we established an algebraic version of (\ref{equ.gysinlformula})
for $IT_{1,*}$
in a context of appropriately normally nonsingular regular algebraic embeddings
\cite[Theorem 6.30]{banaglnyjm}.
Similarly, we expect $IT_{1*}$ to satisfy a relation analogous to
(\ref{equ.mainbundletransferformulaintro}) for
smooth algebraic morphisms $p:X\to B$, where $\nu_p$ would now be inverse to
the algebraic relative tangent bundle $T_{X/B}$. Such a relation together
with the results of the present paper then enable further comparison
between the Hodge-theoretic class $IT_{1*}$ and the topological class $L_*$.
The aforementioned normally nonsingular maps form a topological parallel
to the algebraic concept of a local complete intersection morphism, i.e.
a morphism of varieties that can be factored into a closed regular embedding
and a smooth morphism. Hence the results of the present paper impact the 
behavior of topological characteristic classes under transfers associated
to local complete intersection morphisms.

\section{Stable Vertical Normal Block Bundles}
\label{sec.stableverticalblockbundles}

Block bundles with manifold fiber over compact polyhedra 
admit stable vertical normal closed disc block bundles,
see e.g. \cite{ebertrandal}, \cite{hllrw}, 
as well as \cite[p. 83]{buonrs} for 
the more general mock bundle situation.
We will use the vertical normal block bundle later in the
description of the Umkehr map and thus recall the construction
in the form we need it for our purposes.

Let $F$ be a closed oriented PL manifold of dimension $d$ and
let $K$ be a finite ball complex with associated
polyhedron $B=|K|$. (The polyhedron $B$ is not required to
be a manifold.) 
Let $\xi$ be an oriented PL $F$-block bundle over $K$ (Casson \cite{casson})
with total space $X=E(\xi)$.
Let $b$ denote the dimension of $B$ so that $\dim X = d+b.$
The block of $\xi$ over a cell $\sigma \in K$ will be denoted
by $\xi (\sigma)$. For every $\sigma$, there is a block-preserving
PL homeomorphism
$\xi (\sigma) \cong F \times \sigma$. Thus the blocks of $\xi$
are compact PL manifolds with boundary 
\[ \xi (\partial \sigma) := \bigcup_{\tau \in \partial \sigma} \xi (\tau). \]
Over the interiors 
$\overset{\circ}{\sigma}$ of cells, we set
$\xi (\overset{\circ}{\sigma}) := \xi (\sigma) - \xi (\partial \sigma)$.

In order to construct a stable vertical normal PL block bundle
(and hence a stable PL microbundle, since $\BSPL \simeq \BBSPL$)
for $\xi$, choose a block preserving PL embedding
\[ \theta: X \hookrightarrow \real^s \times B\]
for sufficiently large $s>2d+b+1$, i.e. a PL embedding such that
\[ \theta (\xi (\overset{\circ}{\sigma})) 
  \subset \real^s \times \overset{\circ}{\sigma} \]
and
\[ \theta|: (\xi (\sigma), \xi (\partial \sigma)) 
  \longrightarrow (\real^s \times \sigma, 
    \real^s \times \partial \sigma) \]
is a locally flat PL embedding of manifolds
for every simplex $\sigma \subset K$.
One way to obtain such an embedding is to
choose first a PL embedding $e: X \hookrightarrow \real^s$.
By Casson \cite[Lemma 6, p. 43]{casson}, $\xi$ can be equipped
with a choice of block fibration $p: X\to B$. This is a PL map
such that $\xi (\sigma) = p^{-1} (\sigma)$ for every cell 
$\sigma \in K$.
Then
$\theta := (e,p): X \hookrightarrow \real^s \times B$
is a block preserving PL embedding. (The local flatness is ensured 
by requiring the codimension to be at least $3$.)
Another method to construct a block preserving embedding 
$\theta$ is by induction
over the cells $\sigma \in K$, starting with the $0$-cells $\sigma^0$
and embeddings 
$\theta: \xi (\sigma^0)\cong F \subset \real^s \times \sigma^0 \cong \real^s$.
These are then extended to proper embeddings of manifolds-with-boundary
$\theta: \xi (\sigma^1) \subset \real^s \times \sigma^1$
for every $1$-cell $\sigma^1$ in $K$, etc. 
As in \cite{rosablockbundles1}, 
an embedding $j:M\to Q$ of manifolds is \emph{proper}
if $j^{-1}(\partial Q)=\partial M$.

Recall that one says that a PL embedding $j: A\to P$ of polyhedra
possesses a 
\emph{normal PL closed disc block bundle} if there exists a regular
neighborhood $N$ of $j(A)$ in $P$ such that $N$ is the total space
of a PL closed disc block bundle over $j(A)$ whose zero section
embedding agrees with the inclusion $j(A) \subset N$.

\begin{prop} \label{thm.existencenormaldiscblockbundle}
Let $\xi$ be an $F$-block bundle over a finite cell complex $K$
with polyhedron $B=|K|$,
where $F$ is a closed PL manifold.
A block preserving PL embedding $\theta: X \to \real^s \times B$ 
of the total space $X$ of $\xi$ possesses
a normal PL closed $(s-d)$-disc block bundle $\nu_\theta$ over 
$\theta (X)$.
If $\xi$ is oriented, then $\nu_\theta$ is canonically oriented.
\end{prop}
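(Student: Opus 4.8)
The plan is to construct $\nu_\theta$ one block at a time over the cells of $K$, assemble them, and then read off the orientation from the orientations of $F$ and $\real^s$.

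First I would spell out what the block preserving hypothesis on $\theta$ gives: for each cell $\sigma\in K$ of dimension $k$, the map $\theta$ restricts to a proper locally flat PL embedding of the compact manifold-with-boundary pair $(\xi(\sigma),\xi(\partial\sigma))$ into $(\real^s\times\sigma,\real^s\times\partial\sigma)$, of codimension $s-d$ (which by the standing hypothesis $s>2d+b+1$ exceeds $d+1$). Over a $0$-cell $\sigma^0$ this says that $\theta(\xi(\sigma^0))\cong F$ is a closed locally flat PL submanifold of $\real^s\times\sigma^0\cong\real^s$ of codimension $s-d$; by Rourke and Sanderson's existence theorem for normal block bundles of locally flat PL submanifolds (for the mock bundle generality see \cite[p.~83]{buonrs}) it admits a normal closed $(s-d)$-disc block bundle $\nu_\theta(\sigma^0)$.

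Next I would induct on the dimension of the cells of $K$. Assume that for every cell $\tau$ of dimension $<k$ a normal closed $(s-d)$-disc block bundle $\nu_\theta(\tau)$ of $\theta(\xi(\tau))$ in $\real^s\times\tau$ has been chosen, agreeing over faces. For a $k$-cell $\sigma$, the bundles $\nu_\theta(\tau)$, $\tau\in\partial\sigma$, then glue to a normal block bundle of $\theta(\xi(\partial\sigma))$ in $\real^s\times\partial\sigma$, and by the relative form of the same existence theorem, applied to the proper locally flat submanifold pair $(\xi(\sigma),\xi(\partial\sigma))\hookrightarrow(\real^s\times\sigma,\real^s\times\partial\sigma)$, this extends to a normal closed $(s-d)$-disc block bundle $\nu_\theta(\sigma)$ of $\theta(\xi(\sigma))$ in $\real^s\times\sigma$ restricting over $\real^s\times\partial\sigma$ to the given one. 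The resulting face-compatible family $\{\nu_\theta(\sigma)\}_{\sigma\in K}$ is, by definition, a PL closed $(s-d)$-disc block bundle $\nu_\theta$ over $\theta(X)$ with zero section $\theta(X)$; its total space $N=\bigcup_{\sigma\in K}E(\nu_\theta(\sigma))$, being glued from regular neighborhoods of the $\theta(\xi(\sigma))$ matching along faces, is a regular neighborhood of $\theta(X)$ in $\real^s\times B$. I expect the only real work here to be the cited relative normal block bundle existence theorem in the manifold-with-boundary setting; the rest is bookkeeping about ball complexes and regular neighborhoods.

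Finally, for the orientation claim, assume $\xi$ oriented, so that the trivialisations $\xi(\sigma)\cong F\times\sigma$ carry the fixed orientation of $F$ compatibly over faces. Over $\xi(\sigma)$ the block bundle $\nu_\theta(\sigma)$ is stably complementary to the tangent block bundle of $\xi(\sigma)$ inside that of $\real^s\times\sigma$; since $\xi(\sigma)\cong F\times\sigma$ gives $\tau_{\xi(\sigma)}\cong\tau_F\boxplus\tau_\sigma$ with $\tau_\sigma$ and $\tau_{\real^s\times\sigma}$ trivial, cancelling the $\sigma$-tangent directions leaves a stable isomorphism $\nu_\theta(\sigma)\oplus\tau_F\cong\underline{\real}^s$ over $\xi(\sigma)$. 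Hence, with a fixed convention for the orientation of a Whitney sum, the standard orientation of $\underline{\real}^s$ together with the orientation of $\tau_F$ coming from the orientation of $F$ determines an orientation of $\nu_\theta(\sigma)$; crucially the $\sigma$-directions have cancelled, so this prescription uses only the orientations of $F$ and $\real^s$ and no orientation of $B$ (which in general does not exist). These orientations of the blocks $\nu_\theta(\sigma)$ are then compatible over faces and assemble to the asserted canonical orientation of $\nu_\theta$.
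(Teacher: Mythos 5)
Your skeleton is the same as the paper's: induct over the cells of $K$ and invoke Rourke--Sanderson's existence theorem for normal disc block bundle structures on regular neighborhoods (\cite[Thm.~4.3]{rosablockbundles1}), in its relative form at the inductive step. The genuine difference is the order of construction, and that is where your write-up has a gap you dismiss as bookkeeping. The paper first builds one global regular neighborhood $N=f^{-1}[0,1/2]\subset\real^s\times B$, using a triangulation $L$ of $\real^s\times B$ containing $\theta(X)$ and all $\real^s\times\sigma$ as subcomplexes and meeting $\theta(X)$ fully; this guarantees at the outset that each $Q_\sigma=N\cap(\real^s\times\sigma)$ is a regular neighborhood of $\theta(\xi(\sigma))$ in $\real^s\times\sigma$ meeting $\real^s\times\partial\sigma$ regularly, that the base complexes over all cells are restrictions of one complex $T$ triangulating $\theta(X)$, and that the total space of the final block bundle is a regular neighborhood of $\theta(X)$ in the non-manifold polyhedron $\real^s\times B$, as the definition of a normal block bundle requires. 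In your cell-by-cell version you must instead (i) at each inductive step produce a regular neighborhood of $\theta(\xi(\sigma))$ in $\real^s\times\sigma$ whose intersection with $\real^s\times\partial\sigma$ is exactly the total space already constructed over the boundary, with a base cell structure restricting to the ones already chosen over the faces, and (ii) prove that the resulting union is a regular neighborhood of $\theta(X)$ in $\real^s\times B$ in Cohen's sense. Neither point is automatic, and (ii) in particular is not addressed by the manifold existence theorem you cite; it is precisely to avoid such a gluing argument that the paper constructs $N$ globally first (and the explicit description $N=f^{-1}[0,1/2]$ is reused later, in the uniqueness proof). So either supply these regular-neighborhood extension and gluing arguments, or reorganize as in the paper.

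Your orientation argument is a legitimately different route from the paper's. The paper orients each normal bundle $\nu_\sigma$ directly from the embedding of the oriented manifold $M_\sigma=\theta(\xi(\sigma))$ in the oriented manifold $\real^s\times\sigma$ and checks face compatibility via the incidence relation $\epsilon(M_\tau,M_\sigma)=\epsilon(\tau,\sigma)$ inherited from the orientation of $\xi$; you instead use the stable Whitney-sum relation $\nu_\theta(\sigma)\oplus\tau_F\cong\underline{\real}^s$ over $\xi(\sigma)\cong F\times\sigma$, which, after cancelling the trivial $\tau_\sigma$ summand, pins down an orientation from those of $F$ and $\real^s$ alone. That is workable in the stable range and makes face compatibility nearly tautological, but you should say explicitly that reversing the orientation of $\tau_\sigma$ flips both sides of the isomorphism, so the induced orientation of $\nu_\theta(\sigma)$ is independent of the choice, and that the cancellation is happening for block bundles (or after passing to the unique stable microbundle lift); as written this step is asserted rather than argued, whereas the paper's incidence-number check is elementary and complete.
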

\begin{proof}
We begin by constructing
a particular regular neighborhood $N$ of 
$\theta (X)$ in $\real^s \times B$ such that $N$ is compatible
with the blocks $\theta (\xi (\sigma))$ and $\real^s \times \sigma$
for all cells $\sigma \in K$.
There exists a locally finite simplicial complex
$L$ with subcomplexes $T,L_\sigma \subset L$ 
($\sigma \in K$) such that the following
properties hold:\\

\noindent (i) $|L| = \real^s \times B$,\\

\noindent (ii) $\theta (X) = |T|$,\\

\noindent (iii) for every cell $\sigma$ in $K$, 
        $\real^s \times \sigma = |L_\sigma|,$ and\\

\noindent (iv) each simplex of $L$ meets $T$ in a single face or not at all.\\

\noindent It follows from (ii) and 
$\theta (\xi (\sigma))\subset \real^s \times \sigma$
that the compact manifold
$M_\sigma := \theta (\xi (\sigma))$ is triangulated by $L_\sigma \cap T$.
The boundary of $M_\sigma$ is triangulated
by the subcomplex $L_{\partial \sigma} \cap T$ of $L$, where
$L_{\partial \sigma}$ is the subcomplex of $L$
given by
\[ L_{\partial \sigma} = \bigcup_{\tau \in \partial \sigma}
     L_\tau. \]
Furthermore, (iv) implies that
each simplex of $L_\sigma$ meets $L_\sigma \cap T$ 
in a single face or not at all.
Let $f: L \to [0,1] = \Delta^1$ be the unique simplicial map
such that $f^{-1} (0) = |T|$.
Then the preimage
\[ N := f^{-1} [0, 1/2] \subset \real^s \times B \]
is a regular neighborhood of $\theta (X)$ in $\real^s \times B$.
The intersection $Q_\sigma := N \cap (\real^s \times \sigma)$ 
is a regular neighborhood of the manifold $M_\sigma$ 
in the manifold $\real^s \times \sigma$.
This regular neighborhood
meets the boundary $\real^s \times \partial \sigma$
transversely, i.e.
$N\cap (\real^s \times \partial \sigma)$ is a regular 
neighborhood of
$\theta (\xi (\sigma)) \cap (\real^s \times \partial \sigma)
= \theta (\xi (\partial \sigma))$
in $\real^s \times \partial \sigma$.
The boundary of the compact manifold $Q_\sigma$ is described by
\begin{equation} \label{equ.boundaryqsigma}
\partial Q_\sigma =
(f^{-1} (1/2) \cap (\real^s \times \sigma)) \cup
 \bigcup_{\tau \in \partial \sigma} Q_\tau 
\end{equation}
and $M_\sigma$ is a proper submanifold of $Q_\sigma$.

We will construct a PL closed disc block bundle
$\nu_\theta$ over $\theta (X)$ by induction on the cells $\sigma$ of $K$. 
The total space $E(\nu_\theta)$ of $\nu_\theta$
is given by $E(\nu_\theta) := N$.
Given a nonnegative integer $n$, we set
\[ L_n := \bigcup_\sigma L_\sigma, \]
where the union is taken over all cells $\sigma \in K$
of dimension at most $n$. The corresponding polyhedron is 
$|L_n| = \real^s \times B^n$, where
$B^n$ denotes the $n$-skeleton of $B$.
Set
\[ Q_n := \bigcup_\sigma Q_\sigma \subset \real^s \times B^n, \]
where the union is taken over all cells $\sigma \in K$
of dimension at most $n$, so that
\[ Q_n = N\cap (\real^s \times B^n). \]
For $n=b=\dim B$, $B^n = B$
and thus $Q_b = N$.

Let $\sigma$ be a $0$-cell of $K$.
By \cite[Thm. 4.3, p. 16]{rosablockbundles1},
there is a disc block bundle
$\nu_\sigma$ over the complex $L_\sigma \cap T$ with total space
$E(\nu_\sigma) = Q_\sigma.$
Then the collection of blocks
$\nu_\sigma (\beta),$ $\beta \in L_\sigma \cap T,$
of the bundles $\nu_\sigma$ endow $Q_0$ with the structure of
a disc block bundle $\nu_0$ over $L_0 \cap T$.
Assume inductively that a block bundle $\nu_{n-1}$
over $L_{n-1} \cap T$ with total space
\[ E(\nu_{n-1}) = Q_{n-1} \]
has been constructed such that
for all cells $\sigma \in K$ with 
$\dim \sigma < n,$ the restriction $\nu_\sigma$
of $\nu_{n-1}$
to the  subcomplex $L_\sigma \cap T \subset L_{n-1} \cap T$
has total space 
$E(\nu_\sigma) = Q_\sigma.$
Let $\sigma \in K$ be an $n$-cell.
The pair $(M_\sigma, \partial M_\sigma)$ is triangulated by
$(L_\sigma \cap T, L_{\partial \sigma} \cap T)$.
Using the description (\ref{equ.boundaryqsigma}) of $\partial Q_\sigma$,
we have
\[ E(\nu_{n-1}|_{L_{\partial \sigma} \cap T}) 
 = \bigcup_{\tau \in \partial \sigma}
    E(\nu_{n-1}|_{L_\tau \cap T})
 = \bigcup_\tau Q_\tau
   \subset \partial Q_\sigma. \]
Since $Q_\sigma$ is a regular neighborhood of
the compact manifold $M_\sigma$, there exists, again 
by \cite[Thm. 4.3]{rosablockbundles1}, a disc block bundle
$\nu_\sigma$ over $L_\sigma \cap T$ with total space
$E(\nu_\sigma) = Q_\sigma$
such that
\[ \nu_\sigma|_{L_{\partial \sigma} \cap T} 
  = \nu_{n-1}|_{L_{\partial \sigma} \cap T}.  \]
Then the collection of blocks
$\nu_\sigma (\beta),$ $\beta \in L_\sigma \cap T,$ $\dim \sigma \leq n,$
of the bundles $\nu_\sigma$ endow 
$Q_n = \bigcup_{\dim \sigma \leq n} Q_\sigma$ with the structure of
a disc block bundle $\nu_n$ over $L_n \cap T$.
By construction,
\[ E(\nu_n|_{L_\sigma \cap T}) = E(\nu_\sigma) = Q_\sigma \]
for all $\sigma \in K,$ $\dim \sigma \leq n$.
This concludes the inductive step.
For $n=b$, 
$\nu_\theta := \nu_b$ is a PL closed disc block bundle
over $L_b \cap T =T$ with total space
$E(\nu) = N.$

If $P$ is an oriented codimension $0$ submanifold of the
boundary $\partial M$ of an oriented manifold $M$, 
then the incidence number $\epsilon (P,M)$ is defined to
be $+1$ if the orientation on $P$ induced by the orientation
of $M$ agrees with the given orientation of $P$, and
$-1$ otherwise. 
Suppose that $\xi$ is oriented as an $F$-block bundle.
Then $K$ is an oriented cell complex and each block
$\xi (\sigma)$ is oriented (as a manifold) such that
$\epsilon (\xi (\tau), \xi (\sigma)) = \epsilon (\tau, \sigma)$
whenever $\tau$ is a codimension $1$ face of a cell $\sigma \in K$.
Requiring $\theta$ to be orientation preserving, we obtain
orientations of all $M_\sigma$ such that
$\epsilon (M_\tau, M_\sigma) = \epsilon (\tau, \sigma)$.
Give every $\real^s \times \sigma$ the product orientation
determined by the orientation of the cell $\sigma$ and the
standard orientation of $\real^s$. Then the inclusion embeddings
of oriented manifolds $M_\sigma \subset \real^s \times \sigma$
induce unique orientations of the normal bundles
$\nu_\sigma$. The above incidence number relation implies that
these orientations fit together to give an orientation of $\nu_\theta$.
\end{proof}

Using the PL homeomorphism $\theta: X \to \theta (X)$, we may think
of $\nu_\theta$ as a bundle over $X$.

\begin{prop} \label{prop.nuthetaindepoftheta}
For $s$ sufficiently large (compared to the dimension of $X$),
the equivalence class of the disc block bundle $\nu_\theta$ 
as constructed in Proposition \ref{thm.existencenormaldiscblockbundle}
is independent of the
choice of blockwise embedding $\theta:X\hookrightarrow \real^s \times B$
and thus only depends on the $F$-block bundle $\xi$.
\end{prop}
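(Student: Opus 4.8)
The statement is a uniqueness result: any two block-preserving embeddings $\theta_0, \theta_1: X \hookrightarrow \real^s \times B$ produce equivalent normal disc block bundles, for $s$ large. The standard template for such results is an isotopy/concordance argument: connect $\theta_0$ and $\theta_1$ by a block-preserving concordance (embedding of $X \times [0,1]$), appeal to uniqueness of regular neighborhoods and normal block bundles in the concordance, and restrict to the two ends. I would structure the proof accordingly.

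First I would reduce to the standard-position case. Given $\theta_0, \theta_1: X \hookrightarrow \real^s \times B$, consider them as embeddings into $\real^{2s} \times B = (\real^s \times \real^s) \times B$ via $x \mapsto (\theta_0(x), 0)$ and $x \mapsto (0, \theta_1(x))$; by Proposition \ref{thm.existencenormaldiscblockbundle} and an easy stabilization lemma, the normal disc block bundle of $\theta_i$ viewed in $\real^{2s} \times B$ is $\nu_{\theta_i}$ stabilized by a trivial summand of the appropriate rank, hence equivalent (as a stable block bundle, which is all we need since $\BSPL \simeq \BBSPL$) to $\nu_{\theta_i}$ itself. So it suffices to compare the two embeddings after this stabilization, where the codimension is very large compared to $\dim X$.

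Next, the core step: I would build a block-preserving PL concordance $\Theta: X \times [0,1] \hookrightarrow \real^{2s} \times B \times [0,1]$ with $\Theta|_{X \times \{i\}} = \theta_i$ (in the stabilized picture), proper over $B \times [0,1]$ and blockwise a proper locally flat embedding of manifolds with boundary. Such a $\Theta$ exists by the usual inductive construction over the cells $\sigma \in K$ exactly as in the proof of Proposition \ref{thm.existencenormaldiscblockbundle}: on a cell $\sigma$ one has two proper embeddings of $\xi(\sigma) \times [0,1]$-type data agreeing with the already-constructed concordance over $\xi(\partial\sigma)$, and concordance (in fact isotopy) uniqueness of embeddings of manifolds in a fixed codimension $\geq 3$ lets one fill in; here one uses that the codimension after stabilization is large, so the general-position and engulfing arguments of PL topology apply blockwise. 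Then Proposition \ref{thm.existencenormaldiscblockbundle}, applied to $\Theta$ over the base polyhedron $B \times [0,1]$ and the block complex $K \times \{0,1,[0,1]\}$, produces a normal PL closed disc block bundle $\nu_\Theta$ over $\Theta(X \times [0,1])$. Restricting $\nu_\Theta$ to the two ends $X \times \{0\}$ and $X \times \{1\}$ gives disc block bundles over $\theta_0(X)$ and $\theta_1(X)$; since a disc block bundle over a polyhedron times $[0,1]$ is, by the standard block-bundle covering homotopy/uniqueness theorem (Rourke--Sanderson, as in \cite{rosablockbundles1}), equivalent to the product of $[0,1]$ with its restriction to either end, we conclude $\nu_{\theta_0} \cong \nu_\Theta|_0 \cong \nu_\Theta|_1 \cong \nu_{\theta_1}$. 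The normal disc block bundle constructed in Proposition \ref{thm.existencenormaldiscblockbundle} depends on auxiliary choices (the simplicial complex $L$, the map $f$), but uniqueness of regular neighborhoods of a compact polyhedron in a PL manifold, together with uniqueness of block bundle structures on a given regular neighborhood (again \cite{rosablockbundles1}), shows the equivalence class is independent of these, so $\nu_\Theta|_i$ is genuinely $\nu_{\theta_i}$.

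The main obstacle is the concordance-uniqueness input used in the inductive step: one needs that block-preserving PL embeddings of $X$ in $\real^s \times B$ are unique up to block-preserving concordance once $s$ is large, which blockwise reduces to the statement that two proper PL embeddings of a compact manifold-with-boundary in a disc bundle over a simplex, agreeing on the boundary, are properly concordant rel boundary in high codimension. This is where the hypothesis ``$s$ sufficiently large'' is really consumed, and care is needed to ensure the concordances assembled over the cells are compatible, i.e. that the induction genuinely goes through with the relative (rel $\xi(\partial\sigma)$) form of the uniqueness statement; this is the same bookkeeping as in Proposition \ref{thm.existencenormaldiscblockbundle} but now one dimension up. All other steps — stabilization, restriction to ends, invariance under the choices of $L$ and $f$ — are formal consequences of standard PL regular neighborhood and block bundle theory.
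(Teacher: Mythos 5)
Your proposal follows essentially the same route as the paper: construct a block-preserving PL concordance $X\times I \hookrightarrow \real^s\times B\times I$ between the two embeddings by induction over cells (using $s$ large), apply the construction of Proposition \ref{thm.existencenormaldiscblockbundle} to the concordance to obtain a disc block bundle restricting to $\nu_{\theta_0}$ and $\nu_{\theta_1}$ at the two ends (with Cohen's regular-neighborhood uniqueness and Rourke--Sanderson's block-structure uniqueness disposing of the auxiliary choices), and conclude equivalence from the resulting bundle over $X\times I$. The only real deviation is your preliminary stabilization into $\real^{2s}\times B$, which the paper avoids by building the concordance directly in $\real^s\times B\times I$ (the blockwise codimension bound $s>2d+b+1$ already suffices), and which, as you note only in passing, then requires destabilizing via block-bundle stability of $\BBSPL_{s-d}$ over complexes of dimension $\dim X < s-d-1$ rather than the cited comparison $\BSPL\simeq\BBSPL$.
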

\begin{proof}
Let $\theta, \theta': X \hookrightarrow \real^s \times B$ be
$\xi$-block preserving PL embeddings, giving rise to
vertical normal disc block bundles $\nu_\theta$ and $\nu_{\theta'}$.
The idea is to construct a $(\xi \times I)$-block preserving concordance
$\overline{\theta}: X\times I \hookrightarrow \real^s \times B \times I$ 
between $\theta$ and $\theta'$ and
then apply Proposition \ref{thm.existencenormaldiscblockbundle}
to endow inductively a suitable regular neighborhood of the image of the concordance
with the structure of a disc block bundle, extending the
disc block bundles $\nu_\theta$ and $\nu_{\theta'}$.
This implies that $\nu_\theta$ and $\nu_{\theta'}$ are equivalent. 

We begin by observing that the equivalence class of the 
block bundle $\nu_\theta$ does not change under passage to a
simplicial subdivision $L_0$ of
the complex $L$ used in the proof of 
Proposition \ref{thm.existencenormaldiscblockbundle}.
This is a consequence of Cohen's
uniqueness theorem for regular neighborhoods in general polyhedra,
\cite[Thm. (3.1), p. 196]{cohenregnbhdsinpolyhedra} and
Rourke-Sanderson's uniqueness theorem for disc block bundle
structures, \cite[Thm. 4.4, p. 16]{rosablockbundles1}.

The cylinder $\overline{B} = B\times I$ is the polyhedron of 
the product cell complex
$\overline{K} = K \times I$, where $I$ carries the minimal cell structure.
Let $\overline{X} = X\times I$. 
The product block bundle $\overline{\xi} := \xi \times I$
is an $F$-block bundle over the cell complex $\overline{K}$
with total space $E(\xi \times I) = \overline{X}$
and blocks
$(\xi \times I)(\sigma \times \tau) = \xi (\sigma) \times \tau$,
where $\sigma$ is a cell in $K$ and $\tau$ a cell of $I$.
For sufficiently large $s$, by induction over the finitely many cells
$\sigma$ in $\overline{K}$, 
there exists a PL embedding
\[ \overline{\theta}: \overline{X} \longrightarrow 
   \real^s \times \overline{B} = (\real^s \times B) \times I \]
such that
$\overline{\theta}_0 = \theta \times 0,$
$\overline{\theta}_1 = \theta' \times 1,$
$\overline{\theta} 
  (\overline{\xi} (\overset{\circ}{\sigma})) 
  \subset \real^s \times \overset{\circ}{\sigma}$ 
and
\[ \overline{\theta}|: 
  (\overline{\xi} (\sigma), \overline{\xi} (\partial \sigma)) 
  \longrightarrow (\real^s \times \sigma, 
    \real^s \times \partial \sigma) \]
is a locally flat PL embedding of manifolds
for every cell $\sigma \subset \overline{K}$.
Thus $\overline{\theta}$ is a block preserving concordance
between $\theta$ and $\theta'$ satisfying
\[
\overline{\theta}(X\times I) \cap (\real^s \times B \times 0)
  = \theta (X) \times 0,~  
  \overline{\theta}(X\times I) \cap (\real^s \times B \times 1)
  = \theta' (X) \times 1.  
\]  
There exists a locally finite simplicial complex
$\overline{L}$ with subcomplexes $\overline{T}, 
\overline{L}_\sigma \subset \overline{L}$ 
($\sigma \in \overline{K}$)
such that the following
properties hold:\\  
  
\noindent (i) $|\overline{L}| = \real^s \times \overline{B}$,\\

\noindent (ii) the complexes $L\times 0$ and $L' \times 1$ 
 used in constructing $\nu_\theta$ and $\nu_{\theta'}$
are both subcomplexes of $\overline{L}$ such that
\[ |L| = \real^s \times B\times 0,~  
   |L'| = \real^s \times B\times 1, \]

\noindent (iii) $\overline{\theta} (\overline{X}) = |\overline{T}|$,\\

\noindent (iv) for every cell $\sigma$ in $\overline{K}$,
 \[ \real^s \times \sigma = |\overline{L}_\sigma|, \text{ and} \]

\noindent (v) each simplex of $\overline{L}$ meets 
   $\overline{T}$ in a single face or not at all.\\

(To achieve the fullness property (v), it may be necessary
to subdivide $L\times 0$ and $L' \times 1$, but we have
observed earlier that this does not change the equivalence class
of $\nu_\theta, \nu_{\theta'}$. Thus we may call the subdivisions
$L\times 0$ and $L' \times 1$ again.)
Let $f: L \to [0,1]$ be the unique simplicial map
such that $f^{-1} (0) = |T| = \theta (X)$.
The disc block bundle $\nu_\theta$ over $T$ has total space
\[ E(\nu_\theta) = N = f^{-1} [0, 1/2] \subset \real^s \times B, \]
a regular neighborhood of $\theta (X)$ in $\real^s \times B$.
Let $f': L' \to [0,1]$ be the unique simplicial map
such that $f'^{-1} (0) = |T'| = \theta' (X)$.
The disc block bundle $\nu_{\theta'}$ over $T'$ has total space
\[ E(\nu_{\theta'}) = N' = f'^{-1} [0, 1/2] \subset \real^s \times B, \]
a regular neighborhood of $\theta' (X)$ in $\real^s \times B$.
Let $\overline{f}: \overline{L} \to [0,1]$ be the unique simplicial map
such that $\overline{f}^{-1} (0) = |\overline{T}| 
= \overline{\theta} (X\times I)$.
By Proposition \ref{thm.existencenormaldiscblockbundle} and its proof,
the regular neighborhood
\[ \overline{N} := \overline{f}^{-1} [0, 1/2] 
     \subset \real^s \times B \times I \]
of $\overline{\theta} (X\times I)$
is the total space 
$E(\nu_{\overline{\theta}})=\overline{N}$ of a PL disc block bundle
$\nu_{\overline{\theta}}$ over $\overline{T}$ such that
\[ \nu_{\overline{\theta}}|_{L\times 0} = \nu_\theta,~
   \nu_{\overline{\theta}}|_{L' \times 1} = \nu_{\theta'}. \]
Thus, pulling back $\nu_{\overline{\theta}}$ to $X\times I$ along 
$\overline{\theta}$, we obtain a PL disc block bundle
over $X\times I$ whose restriction
to $X\times 0$ is $\nu_\theta$ and whose restriction to
$X\times 1$ is $\nu_{\theta'}$. This implies that
$\nu_\theta$ and $\nu_{\theta'}$ are equivalent as disc block bundles.
\end{proof}

The oriented normal block bundle $\nu_\theta$ provided by 
Theorem \ref{thm.existencenormaldiscblockbundle} is classified by a map
\[ X \longrightarrow \BBSPL_{s-d}. \]
If $s$ is sufficiently large, then by Proposition \ref{prop.nuthetaindepoftheta},
the homotopy class of this map does not depend on the choice of
blockwise embedding $\theta$. We denote the resulting 
disc block bundle equivalence class by $\nu_\xi$ and refer to it as the
\emph{stable vertical normal block bundle} of $\xi$.
The restriction $s> b+2d+1$ ensures that
the block bundle $\nu_\xi$ is in the stable range,
there exists a unique (up to equivalence) 
oriented PL microbundle $\mu$ over $X$ whose
underlying block bundle is $\nu_\xi$, and this microbundle is also in the
stable range: Since $\dim X = d+b < (s-d)-1,$
the  natural map 
\[ [X,\BSPL_{s-d}] \cong [X,\BBSPL_{s-d}] \]
is a bijection.
We will refer to $\mu$ as the \emph{stable vertical normal microbundle}
of $\xi$. 

\begin{example} \label{exple.trivialxiverticalbundle}
For the trivial $F$-block bundle $\xi$ with total space
$X = F \times B$, we may choose a PL embedding
$\theta_F: F \hookrightarrow \real^s$, $s$ large, and take
$\theta: X \hookrightarrow \real^s \times B$ to be
$\theta = \theta_F \times \id_B: F\times B \hookrightarrow \real^s \times B$,
which is $\xi$-block preserving.
Let $\nu_F$ be the (stable) normal disc block bundle of $\theta_F$
and $\mu_F$ its unique lift to a PL microbundle.
Then the stable vertical normal block bundle $\nu_\xi$ is represented by
$\nu_\theta = \pro^*_1 \nu_F$ and the stable vertical normal microbundle 
is $\mu = \pro^*_1 \mu_F,$ where $\pro_1: F\times B \to F$ is the factor projection.
\end{example}

\begin{example}
If $F$ is a point, then $X=B$ and we may take
$\theta: X=B \hookrightarrow \real^s \times B$ to be
$\theta (x) = (0,x)$.
The stable vertical normal block bundle $\nu_\xi$ and
the stable vertical normal microbundle $\mu$ are both trivial.
\end{example}

\section{The PL Umkehr Map}
\label{sec.umkehrmap}

Given an oriented $F$-block bundle $\xi$ with nonsingular fiber $F$
over a compact polyhedron and
a module spectrum $E$ over the Thom spectrum $\MSPL$, we will
construct a transfer homomorphism 
$\xi^!: E_n (B) \longrightarrow E_{n+d} (X).$ This will be done
in Section \ref{sec.bundletransferstable} by composing suspension,
the PL Umkehr map $T(\xi)$ and the Thom homomorphism $\Phi$.
The Umkehr map will be constructed in the present section, and
the Thom homomorphism in the next.

As in the previous section, let $F$ be a closed oriented PL manifold 
of dimension $d$ and let $K$ be a finite ball complex with associated
polyhedron $B=|K|$.
Let $\xi$ be an oriented PL $F$-block bundle over $K$
with total space $X=E(\xi)$. Fix a block preserving PL embedding
$\theta: X \hookrightarrow \real^s \times B$ for sufficiently large $s$
and let us briefly write $\nu$ for the vertical normal disc block bundle 
$\nu_\theta$ given
by Proposition \ref{thm.existencenormaldiscblockbundle}.
As discussed in the previous section, there is a unique PL microbundle $\mu$ 
whose underlying block bundle is $\nu$.
The total space $E(\nu)=N$
is a $\xi$-block preserving
regular neighborhood of $\theta (X)$ in $B\times \real^s$.
Let $\dot{\nu}$ denote the sphere block bundle of $\nu$
and write $\partial N$ for the total space of $\dot{\nu}$. 
Let 
\[ \Th (\nu) := N \cup_{\partial N} \cone (\partial N)  \]
be the Thom space of $\nu$.
The cone point in $\Th (\nu)$ will
be denoted by $\infty$. Thom spaces of PL microbundles have been
constructed by Williamson in \cite{williamson}.
By his construction, we may take $\Th (\mu) = \Th (\nu)$, since
the underlying block bundle of $\mu$ is $\nu$ and the homotopy type
of the Thom space depends only on the underlying block bundle
(in fact only on the underlying spherical fibration).

We shall construct a PL map
\[ T(\xi): S^s B^+ = \Th (\real^s \times B) \longrightarrow \Th (\nu), \]
called the \emph{Umkehr map} following the terminology
of \cite{beckergottlieb}.
Points in $N \subset S^s B^+$ are to be mapped by the identity
to points in $N\subset \Th (\nu)$. 
By M. Cohen's \cite[Theorem 5.3]{cohenregnbhdsinpolyhedra},
$\partial N$ is collared in the closure of $(\real^s \times B) - N$.
Thus there exists a polyhedral neighborhood $V$ of $\partial N$ in
the closure of $(\real^s \times B) - N$ and a PL homeomorphism
$h: (\partial N) \times [0,1] \cong V$ such that
$h(x,0)=x,$ $x\in \partial N$.
Now let $T(\xi)$
map those points of $V$ that lie in
$h((\partial N)\times \{ 1 \})$ to $\infty$.
Map all points in
$S^s B^+ - (N \cup V)$ to $\infty$.
Finally, map the points in $V$, using the collar coordinate in $[0,1]$, 
correspondingly along cone lines in 
$\cone (\partial N) \subset \Th (\nu)$.
This concludes the description of the Umkehr map
$T(\xi): S^s B^+ \to \Th (\nu)$.
Since it sends $\infty$ to $\infty$, this is a pointed map.

\begin{example} \label{exple.trivialxiumkehrmap}
We continue Example \ref{exple.trivialxiverticalbundle} 
on the trivial $F$-block bundle $\xi$.
Let 
$T(F): S^s \to \Th (\nu_F)=\Th (\mu_F)$ be the standard Pontrjagin-Thom collapse
over a point, associated to the embedding $\theta_F:F \hookrightarrow \real^s$.
The Umkehr map for $\xi$ is given by
\[ T(\xi): S^s \Smash B^+ 
  \stackrel{T(F) \Smash \id_{B^+}}{\longrightarrow}
   \Th (\nu_F) \Smash B^+= \Th (\nu_\theta). \]
\end{example}

If $E$ is any spectrum, then on reduced $E$-homology 
the Umkehr map induces a homomorphism
\[ T(\xi)_*: \widetilde{E}_{n+s} (S^s B^+) \longrightarrow 
              \widetilde{E}_{n+s} (\Th (\nu)). \]
The suspension isomorphism provides an identification
\[ \sigma: E_n (B) = \widetilde{E}_n (B^+) \cong \widetilde{E}_{n+s} (S^s B^+). \]
The composition yields a map
\[ T(\xi)_* \circ \sigma: E_n (B) 
   \longrightarrow \widetilde{E}_{n+s} (\Th (\nu))
              = \widetilde{E}_{n+s} (\Th (\mu)). \]

\begin{example} \label{exple.trivialxiumkehronehomol}
We continue Example \ref{exple.trivialxiumkehrmap} on the trivial $F$-block bundle $\xi$.
Let $E$ be a commutative ring spectrum and let
$[S^s]_E \in \widetilde{E}_s (S^s)$ denote the image of the unit
$1 \in \pi_0 (E)$ under 
$\sigma: \widetilde{E}_0 (S^0) \stackrel{\cong}{\longrightarrow}
 \widetilde{E}_s (S^s)$.
Then the above map $T(\xi)_* \circ \sigma$ has the description
\begin{align*}
T(\xi)_* \sigma (a)
&= (T(F) \Smash \id_{B^+})_* \sigma (a) 
  = (T(F) \Smash \id_{B^+})_* ([S^s]_E \Smash a) \\
&= (T(F)_* [S^s]_E) \Smash a,
\end{align*}
where $a\in E_n (B)$.
Setting $[\Th \mu_F]_E = T(F)_* [S^s]_E$,
we thus arrive at
\[ T(\xi)_* \sigma (a) = [\Th \mu_F]_E \Smash a. \]
\end{example}

\section{The Thom Homomorphism, Mock Bundles and Witt Spaces}
\label{sec.thomhomomorphism}

We recall the Thom homomorphism $\Phi$ associated to an
oriented PL microbundle $\mu$. This homomorphism will later be
used in the definition of the $F$-block bundle transfer $\xi^!$
with $\mu$ the stable vertical normal PL microbundle of $\xi$. 
The Thom map is given by cap product with the Thom class of 
$\mu$. Therefore, we will recall the homotopy-theoretic description
$u_\SPL (\mu)$ of this class, as well as its geometric description
$u_{BRS} (\mu_\PLB)$ in terms of mock bundles as given by
Buoncristiano, Rourke and Sanderson in \cite{buonrs},
where $\mu_\PLB$ denotes the underlying PL closed disc block bundle
of $\mu$. In particular, we take the opportunity to provide
a brief review of mock bundle theory. Mock bundles over Witt
spaces will play an important role later on. One key fact in
the subsequent development is that the total space of a mock
bundle over a Witt space is again a Witt space.

Let $\MSPL$ be the Thom spectrum associated to PL microbundles
(or PL $(\real^m,0)$-bundles, Kuiper-Lashof \cite{kuiperlashof}).
This is a ring spectrum whose homotopy groups can be identified with the bordism
groups of oriented PL manifolds via the Pontrjagin-Thom isomorphism.
Let $\gamma^\SPL_m$ denote the universal oriented rank $m$
$\PL$-bundle over the classifying space $\BSPL_m$.
An oriented PL microbundle $\mu$ of rank $m$ over a
compact polyhedron $X$ is classified by a map 
$X\to \BSPL_m$,
which is covered by a bundle map $\mu \to \gamma^\SPL_m$.
The induced map on Thom spaces yields
a homotopy class 
\[ u_\SPL (\mu) \in [\Sigma^\infty \Th (\mu), \Sigma^m \MSPL]
  = \RMSPL^m (\Th (\mu)). \]
This class $u_\SPL (\mu)$ is the
\emph{Thom class} of $\mu$ in oriented PL cobordism.
It is in fact an $\MSPL$-orientation of $\mu$
in Dold's sense. Indeed,  
every $H\intg$-orientable $PL$-bundle is $\MSPL$-orientable
(Hsiang-Wall \cite[Lemma 5, p. 357]{hsiangwall}, 
Switzer \cite[p. 308]{switzer}).

Buoncristiano, Rourke and Sanderson
give a geometric description of $\MSPL$-cobordism in \cite{buonrs}
and use it to obtain in particular a geometric description of the Thom
class $u_\SPL (\mu).$ 
The geometric cocycles are given by oriented mock bundles, whose
definition we recall here.
\begin{defn}
Let $K$ be a finite ball complex and $q$ an integer (possibly negative).
A \emph{$q$-mock bundle} $\eta^q/K$ with base $K$ and
total space $E(\eta)$ consists of a PL map
$p:E(\eta)\to |K|$ such that, for each $\sigma \in K$,
$p^{-1} (\sigma)$ is a compact PL manifold of dimension
$q + \dim (\sigma),$ with boundary $p^{-1} (\partial \sigma)$.
The preimage $\eta (\sigma) := p^{-1} (\sigma)$ is called the
\emph{block} of $\eta$ over $\sigma$.
\end{defn}
The empty set is regarded as a manifold of
any dimension; thus $\eta (\sigma)$ may be empty for some
cells $\sigma \in K$.
Note that if $\sigma^0$ is a $0$-dimensional cell of $K$,
then $\partial \sigma^0 = \varnothing$ and thus
$p^{-1} (\partial \sigma)=\varnothing$.
Hence the blocks over $0$-dimensional cells are \emph{closed} manifolds.
Mock bundles over the same complex are \emph{isomorphic} if
there exists a block-preserving PL homeomorphism of total spaces.
(The homeomorphism is \emph{not} required to preserve the projections.)
For our purposes, we need \emph{oriented}
mock bundles, which are defined using incidence numbers of cells and blocks:
Suppose that $(M^n,\partial M)$ is an oriented PL manifold
and $(N^{n-1}, \partial N)$ is an oriented PL manifold
with $N \subset \partial M$. Then an incidence number
$\epsilon (N,M)=\pm 1$ is defined by comparing the orientation
of $N$ with that induced on $N$ from $M$;
$\epsilon (N,M)=+1$ if these orientations agree and $-1$ if they
disagree. An \emph{oriented cell complex} $K$ is a cell complex
in which each cell is oriented. We then have the incidence
number $\epsilon (\tau, \sigma)$ defined for codimension $1$ faces
$\tau < \sigma \in K$.
\begin{defn}
An \emph{oriented mock bundle} is a mock bundle $\eta/K$
over an oriented (finite) ball complex $K$ in which every block
is oriented (i.e. is an oriented PL manifold) such that
for each codimension $1$ face $\tau$ of $\sigma \in K$,
$\epsilon (\eta (\tau), \eta (\sigma)) = \epsilon (\tau, \sigma)$.
\end{defn}

Using intersection homology, Witt spaces have been introduced by 
P. Siegel in \cite{siegel}
as a geometric cycle reservoir representing $\operatorname{KO}$-homology 
at odd primes. Sources on intersection homology include
\cite{gmih1}, \cite{gmih2}, \cite{kirwanwoolf}, 
\cite{friedmanihbook}, \cite{banagltiss}.
\begin{defn}
A \emph{Witt space} is an oriented PL pseudomanifold such that
the links $L^{2k}$ of odd codimensional PL intrinsic strata have
vanishing middle-perversity degree $k$ rational intersection homology,
$IH^{\bar{m}}_k (L;\rat)=0$.
\end{defn}
For example, pure-dimensional complex algebraic varieties are Witt spaces,
since they are oriented pseudomanifolds and 
possess a Whitney stratification whose strata all have
even codimension. The vanishing condition on the intersection
homology of links $L^{2k}$ is equivalent to requiring the canonical morphism
from lower middle to upper middle perversity intersection chain sheaves
to be an isomorphism in the derived category of sheaf complexes.
Consequently, these middle perversity intersection chain sheaves are
Verdier self-dual, and this induces global Poincar\'e duality for
the middle perversity intersection homology groups of a Witt space.
In particular, Witt spaces $X$ have a well-defined bordism invariant 
signature and $L$-classes $L_* (X)\in H_* (X;\rat)$ which agree with
the Poincar\'e duals of Hirzebruch's tangential $L$-classes when $X$ is smooth.
The notion of \emph{Witt spaces with boundary} 
can be introduced as pairs
$(X,\partial X),$ where $X$ is a PL space and $\partial X$ a 
stratum preservingly collared PL subspace of $X$ such that
$X-\partial X$ and $\partial X$ are both compatibly oriented Witt spaces.
The following result is \cite[Lemma 3.11]{banaglnyjm}, which is itself
an analog of \cite[Lemma 1.2, p. 21]{buonrs}).

\begin{lemma} \label{lem.mockbundleoverwittiswitt}
Let $(K,L)$ be a finite ball complex pair such that the polyhedron
$|K|$ is an $n$-dimensional compact Witt space with (possibly empty) boundary
$\partial |K|=|L|$.
Orient $K$ in such a way that the sum of oriented $n$-balls
is a cycle rel boundary. (This is possible since $|K|$,
being a Witt space, is an oriented
pseudomanifold with boundary.)
Let $\eta/K$ be an oriented $q$-mock bundle over $K$ with projection $p$.
Then the total space $E(\eta)$ is an
$(n+q)$-dimensional compact Witt space with boundary 
$p^{-1} (\partial |K|).$
\end{lemma}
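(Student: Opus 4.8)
The plan is to analyze the local structure of the total space $E(\eta)$ over a neighborhood of a point of $|K|$, reducing the Witt condition to the corresponding condition on $|K|$ via the mock bundle structure. The key point is that over the (open) star of a cell, a mock bundle looks like a product-up-to-cone, so the links of strata of $E(\eta)$ are built from links of strata of $|K|$ together with manifold fibers, and the intersection homology vanishing condition is inherited.

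First I would recall that a point $x \in E(\eta)$ lies over some point $y = p(x) \in |K|$, and that a neighborhood of $y$ in $|K|$ has the form $\real^j \times \oc(\ell)$, where $j$ is the dimension of the intrinsic stratum of $|K|$ through $y$ and $\ell$ is the link of that stratum (an $(n-j-1)$-dimensional Witt space, by hypothesis and the characterization of Witt spaces via iterated links). By the definition of a mock bundle, the blocks $\eta(\sigma)$ are compact PL manifolds fitting together along boundaries according to the face relations of $K$; restricting $p$ over the relevant cells, the preimage $p^{-1}(\real^j \times \oc(\ell))$ is PL homeomorphic to $\real^j \times (\text{cone on } p^{-1}(\ell \ast \{\text{something}\}))$—more precisely, one identifies $p^{-1}$ of the open cone neighborhood with an open cone on $p^{-1}$ of a compact polyhedron $\widehat{\ell}$ that is itself the total space of a mock bundle over $\ell$ (the ``restricted'' mock bundle $\eta|_\ell$). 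Here I would lean on the standard fact, recorded in Buoncristiano--Rourke--Sanderson \cite[Lemma 1.2, p. 21]{buonrs} and its analog \cite[Lemma 3.11]{banaglnyjm}, that the structure of a mock bundle is compatible with taking links. So the link of the intrinsic stratum of $E(\eta)$ through $x$ is again the total space of an oriented mock bundle over $\ell$, of one lower dimension.

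The heart of the argument is then an induction on the dimension $n$ of $|K|$. For the base case, if $n$ is small enough that $|K|$ is a closed PL manifold (or handled trivially), then $E(\eta)$ is a PL manifold bundle-like object over it and the Witt condition is vacuous. For the inductive step, I would verify two things: (a) $E(\eta)$ is an oriented PL pseudomanifold of dimension $n+q$ with boundary $p^{-1}(\partial|K|)$—this follows from orienting $K$ so that the top balls form a cycle rel boundary, using the incidence-number condition in the definition of oriented mock bundle to check that the top-dimensional blocks of $E(\eta)$ assemble into a cycle rel boundary, together with the fact (Proposition-type statement, blocks over $0$-cells are closed) that interior codimension-one strata cancel in pairs; and (b) the links of odd-codimensional strata of $E(\eta)$ have vanishing middle rational intersection homology. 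For (b), if a stratum of $E(\eta)$ has odd codimension and link $\widehat{\ell}$, then $\widehat{\ell}$ is (by the previous paragraph) the total space of an oriented mock bundle over an odd-codimensional link $\ell$ of $|K|$; by the inductive hypothesis applied to the smaller-dimensional Witt space $\ell$, $\widehat{\ell}$ is a Witt space, hence in particular its own intersection homology in the relevant degree vanishes. A dimension count—mock bundle fibers over $0$-cells are closed manifolds of dimension $q$, and the fiber dimension shifts stratum dimensions by a constant $q$ while preserving parity of codimension—shows the degrees match up, so the required vanishing $IH^{\bar m}_k(\widehat\ell;\rat)=0$ holds.

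The main obstacle, I expect, is step (b)'s bookkeeping: one must carefully track how the stratification of $E(\eta)$ refines the pullback of the stratification of $|K|$ (the strata of $E(\eta)$ are not simply preimages of strata of $|K|$, because the PL manifold fibers $\eta(\sigma)$ can themselves be cut up, though being \emph{manifolds} they contribute no new singularities), and confirm that the intrinsic strata of $E(\eta)$ over a given intrinsic stratum $S$ of $|K|$ all have links that are mock bundles over the link of $S$. Granting the Buoncristiano--Rourke--Sanderson-style local normal form for mock bundles near a cell, this is a matter of assembling the pieces; the orientation compatibility in (a) is then a routine consequence of the incidence-number axiom in the definition of oriented mock bundle. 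Since the statement is explicitly quoted as \cite[Lemma 3.11]{banaglnyjm}, an efficient write-up would cite that proof and only indicate the inductive skeleton above.
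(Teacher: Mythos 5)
First, note that this paper does not actually prove the lemma: it quotes it as \cite[Lemma 3.11]{banaglnyjm} (an analog of \cite[Lemma 1.2]{buonrs}), so there is no internal argument to compare yours against; your sketch therefore has to stand on its own, and as it stands it has two genuine gaps. The first is the local normal form you assert. The claim that $p^{-1}(\real^j \times \oc(\ell))$ is PL homeomorphic to $\real^j \times \cone(\text{something})$, with the cone taken on the total space of the restricted mock bundle $\eta|_\ell$, is false even for the trivial mock bundle $F\times |K| \to |K|$ with $\dim F = q>0$: there the preimage is $F\times \real^j\times \oc(\ell)$, which is not of the form $\real^j\times(\text{open cone})$ (it is not contractible in the cone direction), and the dimensions do not match either, since the link of the relevant $(q+j)$-dimensional stratum of $E(\eta)$ has dimension $n-j-1=\dim \ell$, whereas $E(\eta|_\ell)$ has dimension $q+\dim\ell$. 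Any correct local statement must have the shape ``$\operatorname{lk}(x,E(\eta))$ is built from $\operatorname{lk}(x,\eta(\sigma))\cong S^{q+j-1}$ together with blocks of dimension $q+\dim\tau-1$ over the cofaces $\tau>\sigma$, assembled in the pattern of $\operatorname{lk}(\sigma,K)$,'' and establishing such a normal form is genuine work: mock bundles have no local triviality, and the lower blocks are not assumed to sit locally flatly inside the boundaries of the higher blocks, so the strata of $E(\eta)$ need not lie over strata of $|K|$ in the simple way your picture suggests.

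The second and more serious gap is the inductive step (b). Even granting that the link $\widehat{\ell}$ of an odd-codimension stratum of $E(\eta)$ is the total space of an oriented mock bundle over the link $\ell$ of an odd-codimension stratum of $|K|$, your induction hypothesis only yields that $\widehat{\ell}$ \emph{is a Witt space}, and from this you conclude ``hence in particular its own intersection homology in the relevant degree vanishes.'' That implication is false: being Witt is a condition on the links of the strata of $\widehat{\ell}$, not on $\widehat{\ell}$ itself, and Witt spaces routinely have nonzero middle-perversity middle-degree intersection homology (any closed oriented surface of positive genus, or any even-dimensional closed manifold with nonzero middle Betti number). What the Witt condition for $E(\eta)$ requires is precisely $IH^{\bar m}_k(\widehat{\ell};\rat)=0$, and the only available input is the corresponding vanishing $IH^{\bar m}_k(\ell;\rat)=0$ coming from the Witt condition on $|K|$; the missing content of the proof is exactly the comparison of the middle intersection homology of $\widehat{\ell}$ with that of $\ell$ (or some other mechanism forcing the vanishing upstairs), which your outline never supplies. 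So the heart of the Witt verification is absent; the pseudomanifold/orientation part (a) is plausible as sketched, and of course citing \cite[Lemma 3.11]{banaglnyjm} outright is legitimate, but the inductive skeleton you propose would not prove the statement.
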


Let $(K,L)$ be any finite ball complex pair.
Oriented mock bundles $\eta_0$ and $\eta_1$ over $K$,
both empty over $L$,
are \emph{cobordant}, if there is an oriented mock bundle
$\eta$ over $K\times I$, empty over $L\times I$,
such that $\eta|_{K\times 0} \cong \eta_0$,
$\eta|_{K\times 1} \cong \eta_1$.
This is an equivalence relation and we set
\[ \Omega^q_\SPL (K,L) :=
 \{ [\eta^q/K] :~ \eta|_L = \varnothing \}, \]
where $[\eta^q/K]$ denotes the cobordism class of the 
oriented $q$-mock bundle $\eta^q/K$ over $K$.
Then by the duality theorem \cite[Thm. II.3.3]{buonrs}
of Buoncristiano, Rourke and Sanderson,
Spanier-Whitehead duality, together with the
Pontrjagin-Thom isomorphism, provides an isomorphism
\begin{equation} \label{equ.identmsplcohbrstq} 
\beta: \Omega^{-q}_\SPL (K,L) \cong\MSPL^q (|K|,|L|) 
\end{equation}
for compact $|K|, |L|$, which is natural with respect to
inclusions $(K', L')\subset (K,L)$;
see also \cite[Remark 3, top of p. 32]{buonrs}.
This is the geometric description of oriented
PL cobordism that we will use throughout the paper.
The functor $\Omega^*_\SPL (-)$ gives rise to a functor
on the category of compact polyhedral pairs and homotopy
classes of continuous maps, which will be denoted by the same symbol
(\cite[Thm. II.1.1]{buonrs}).

Let $\alpha:|K|\to \BBSPL_m$ be an oriented PL closed disc block bundle
of rank $m$ over a finite complex $K$. 
Let $N$ denote the total space of $\alpha$ and $\partial N$ the total space
of the sphere block bundle of $\alpha$.
Then $\alpha$ has a Thom class
(cf. \cite[p. 26]{buonrs})
\begin{equation} \label{equ.defubrs} 
u_{BRS} (\alpha) \in \Omega^{-m}_\SPL (N,\partial N),  
\end{equation}   
which we shall call the
\emph{BRS-Thom class} of $\alpha$, given as follows:
Let $i: K \to N$ be the zero section of $\alpha$.
Endow $N$ with the ball complex structure given by taking the
blocks $\alpha (\sigma)$ of the bundle $\alpha$ as balls,
together with the balls of a suitable ball complex structure on the total
space $\partial N$ of the sphere block bundle of $\alpha$.
Then $i: K\to N$ is the projection of an oriented 
$(-m)$-mock bundle $\eta$, and thus determines an element
\[ u_{BRS} (\alpha) = [\eta] \in \Omega^{-m}_\SPL (N,\partial N). \]
The block of $\eta$ over a ball $\alpha (\sigma)$ of $N$ is $\sigma \in K$.
The following lemma is Lemma 3.14 in \cite{banaglnyjm}.
\begin{lemma} \label{lem.usplgoestoubrs}
Let $\alpha:|K|\to \BSPL_m$ be an oriented PL $(\real^m,0)$-bundle, $|K|$ compact.
Under the isomorphism $\beta$ in (\ref{equ.identmsplcohbrstq}),
the BRS-Thom class $u_{BRS} (\alpha_\PLB)$ 
of the underlying oriented PL block bundle $\alpha_\PLB$ of $\alpha$
gets mapped to the Thom class $u_\SPL (\alpha)$ .
\end{lemma}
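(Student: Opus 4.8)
The plan is to show that the homotopy-theoretic Thom class $u_\SPL(\alpha)$ and the geometric BRS-Thom class $u_{BRS}(\alpha_\PLB)$ are two incarnations of the same thing by tracking both through the chain of identifications in \eqref{equ.identmsplcohbrstq}. Since this is stated as Lemma 3.14 of \cite{banaglnyjm}, the honest approach is to reduce it to the defining property of the isomorphism $\beta$: the Pontrjagin-Thom/Spanier-Whitehead duality isomorphism $\beta \colon \Omega^{-q}_\SPL(K,L) \cong \MSPL^q(|K|,|L|)$ of Buoncristiano-Rourke-Sanderson sends the cobordism class of a mock bundle $\eta^{q}/K$ represented by a PL projection $p\colon E(\eta)\to |K|$ to the cohomology class obtained by transversality/collapse applied to $p$. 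So the whole content is to identify, for the specific mock bundle $\eta$ underlying $u_{BRS}(\alpha_\PLB)$ — namely the zero section $i\colon K\to N$ viewed as a $(-m)$-mock bundle over the ball complex $N$ — the resulting $\MSPL$-cohomology class on $(N,\partial N)$ with the class $u_\SPL(\alpha)\in\MSPL^m(\Th(\mu))=\MSPL^m(N/\partial N)$ coming from the classifying map of $\alpha$.

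First I would unwind what the BRS duality does to $[\eta]$. The mock bundle $\eta$ has base the ball complex $N$ (whose cells are the blocks $\alpha(\sigma)$ of $\alpha$ together with cells of a complex on $\partial N$), total space $E(\eta)=|K|$, and projection $i$; over the ball $\alpha(\sigma)$ the block is the single cell $\sigma$, of dimension $\dim\sigma = \dim\alpha(\sigma) - m$, so $\eta$ is a $(-m)$-mock bundle. Under $\beta$, the class $[\eta]\in\Omega^{-m}_\SPL(N,\partial N)$ corresponds to the $\MSPL^m$-class represented geometrically by the normal data of the embedded submanifold "$E(\eta)$" inside "$N$" — but here $E(\eta)=|K|$ is exactly the zero section, whose normal block bundle inside $N$ is, by definition of $N$ as the total space of the disc block bundle $\alpha$, precisely $\alpha$ itself (resp. its underlying block bundle $\alpha_\PLB$). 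So the geometric cocycle $\beta[\eta]$ is the Pontrjagin-Thom collapse $N/\partial N \to \Th(\alpha_\PLB)$ followed by the classifying bundle map $\Th(\alpha_\PLB)\to \MSPL_m$. On the other hand, $u_\SPL(\alpha)$ is by its very definition the composite of the (identity) collapse $N/\partial N=\Th(\mu)\to\Th(\alpha)$ with the bundle map $\Th(\alpha)\to\Sigma^m\MSPL$ induced by the classifying map $|K|\to\BSPL_m$, and $\Th(\mu)=\Th(\alpha_\PLB)$ since Thom spaces depend only on the underlying block bundle (indeed spherical fibration), as recalled in the text. Thus both classes are represented by the same map of spectra, hence agree.

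The step I expect to carry the real weight — and the one most in need of care — is the compatibility of the BRS duality isomorphism $\beta$ with the Pontrjagin-Thom construction, i.e. verifying that $\beta$ applied to a zero-section mock bundle genuinely yields the collapse-plus-classify map rather than something differing by a sign or an orientation twist. This is where orientations must be matched: one has to check that the given orientation of the disc block bundle $\alpha$ (equivalently, the chosen orientation making the sum of top balls of $K$ a cycle, together with the product orientations on the blocks $\alpha(\sigma)$) is exactly the orientation data that BRS duality uses to produce the $\MSPL$-orientation $u_{BRS}$, and that this matches the orientation built into $u_\SPL(\alpha)$ via the map $\BSPL_m\to\BSPL_m$. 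Concretely I would invoke \cite[Thm. II.3.3]{buonrs} together with the explicit identification there of Thom classes of block bundles with the "diagonal" mock bundles, and then note that the square relating $\beta$ for $(N,\partial N)$ and $\beta$ for a point (where $u_{BRS}$ of the universal bundle manifestly corresponds to the universal Thom class) commutes by naturality of $\beta$ with respect to the classifying map, pulled back along $|K|\to\BSPL_m$. Naturality of $\beta$ under inclusions of pairs is already recorded in the excerpt, and the extension to all maps follows from the functor $\Omega^*_\SPL(-)$ on homotopy classes; this lets one reduce the entire orientation bookkeeping to the single universal case, which is the cleanest way to close the argument.
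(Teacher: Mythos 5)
There is nothing in the paper for you to be compared against here: the paper does not prove this lemma at all, but imports it verbatim as Lemma 3.14 of \cite{banaglnyjm}. Judged on its own merits, your sketch identifies the correct mechanism --- the zero-section mock bundle $\eta$ has the block bundle $\alpha_\PLB$ itself as its normal data, $\Th(\alpha_\PLB)\simeq N/\partial N$, so the dual class ought to be the collapse-and-classify map, which is $u_\SPL(\alpha)$ --- and your remarks about orientations and about reducing the bookkeeping are sensible. The problem is that the decisive step is assumed rather than proved. The isomorphism $\beta$ in (\ref{equ.identmsplcohbrstq}) is not \emph{defined} as ``apply Pontrjagin--Thom to the embedded total space $E(\eta)\subset N$ with its normal block bundle''; as the paper itself says, it is a composite of the geometric duality theorem \cite[Thm.~II.3.3]{buonrs} (a Poincar\'e--Lefschetz--Alexander-type duality between mock-bundle cobordism of $(N,\partial N)$ and bordism of complementary data), Spanier--Whitehead duality, and the Pontrjagin--Thom isomorphism. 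Verifying that the class $[\eta]$, pushed through this composite, comes out as the map $N/\partial N\to \Th(\gamma^\SPL_m)$ induced by the classifying bundle map is precisely the content of the lemma; invoking ``compatibility of $\beta$ with the Pontrjagin--Thom construction'' at that point assumes what is to be shown. The real work is to embed $N$ (rel $\partial N$) ambiently and chase the class through the S-duality step, and your sketch compresses exactly this into one sentence.

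A second, smaller issue is the proposed reduction to ``the universal case'': $\Omega^*_\SPL(-)$ and $\beta$ are only defined, and only stated to be natural, on compact polyhedral pairs, whereas $\BSPL_m$ is not compact. One must either pull back along a map into a finite compact model (a skeleton through which the classifying map of $\alpha$ factors) or argue directly with the bundle map $\alpha\to\gamma^\SPL_m$, rather than applying naturality of $\beta$ at the universal bundle itself. Neither objection means your outline fails --- the statement is true and your route is the natural one --- but as written the essential identification is asserted by appeal to \cite{buonrs} rather than established, so the proof is incomplete at its central point.
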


Let $E$ be an $\MSPL$-module spectrum. Then there is a cap product
\[ \cap: \MSPL^p (X,A) \otimes E_q (X,A) \longrightarrow E_{q-p} (X).  \]
The reduced cobordism group of the Thom space can be expressed as a relative group,
\[ \RMSPL^p (\Th (\mu)) \cong \MSPL^p (N,\partial N), \]
where $N$, as in Section \ref{sec.umkehrmap},
is the total space of the underlying oriented PL closed disc block bundle of $\mu$.
Let
\[ \rho_*: E_* (N) \stackrel{\cong}{\longrightarrow} E_* (X) \]
be the inverse of the isomorphism induced on $E$-homology by the inclusion
$X \hookrightarrow N$ of the zero section.
Using the cap product
\[ \cap: \MSPL^m (N,\partial N) \otimes E_q (N,\partial N) 
   \longrightarrow E_{q-m} (N)
   \stackrel{\rho_*}{\cong} E_{q-m} (X),  \]
we obtain the \emph{Thom homomorphism}
\[   
\Phi := \rho_* (u_\SPL (\mu) \cap -):
\widetilde{E}_q (\Th (\mu)) \cong E_q (N,\partial N) \longrightarrow
 E_{q-m} (X).
\]
Under suitable conditions this map is an isomorphism, for example
if $X$ is connected, $E$ is a ring spectrum and
$u_\SPL (\mu)$ determines an $E$-orientation of $\mu$
(\cite[p. 309, Thm. 14.6]{switzer}; recall that our $X$ is a finite complex).

\section{Block Bundle Transfer}
\label{sec.bundletransferstable}

Let $E$ be a module spectrum over the Thom spectrum $\MSPL$
of oriented $\PL$-bundle theory.
As in Section \ref{sec.stableverticalblockbundles},
$F$ denotes a closed oriented PL manifold of dimension $d$ and
$K$ a finite ball complex with associated
polyhedron $B=|K|$ of dimension $b$.
Let $\xi$ be an oriented PL $F$-block bundle over $K$
with total space $X=E(\xi)$.
Following Boardman \cite{boardman} and 
Becker-Gottlieb \cite{beckergottlieb}, we shall construct a
\emph{transfer homomorphism}
\begin{equation} \label{equ.etransferofp}
\xi^!: E_n (B) \longrightarrow E_{n+d} (X). 
\end{equation}
Let $\mu$ denote the stable oriented vertical normal PL microbundle of $\xi$
whose underlying disc block bundle 
is $\nu_\theta$, the oriented vertical normal disc block bundle of 
the $F$-block bundle $\xi$,
associated to a block preserving embedding $\theta$ for $\xi$.
The rank of $\mu$ and $\nu_\theta$ is $m=s-d$, $d=\dim F,$ $s$ large.
The block bundle transfer is defined to be the composition
\[ E_n (B) \stackrel{T(\xi)_* \sigma}{\longrightarrow}
   \widetilde{E}_{n+s} (\Th (\mu))
   \stackrel{\Phi}{\longrightarrow}
   E_{n+d} (X), \]
where $\sigma$ is the suspension isomorphism, $T(\xi)$ is the
Umkehr map of Section \ref{sec.umkehrmap},
and $\Phi$ is the Thom homomorphism of $\mu$ as described in 
Section \ref{sec.thomhomomorphism}.   
In the present paper, we are mainly interested in the case where $E$ is
ordinary homology, Ranicki's symmetric $\syml$-spectrum, or Witt bordism.   
Let us discuss each of these cases in turn.

\subsection{Block Transfer on Ordinary Homology}

Let $H\intg$ denote the Eilenberg-MacLane spectrum of the ring $\intg$.
The stable universal Thom class in $H^0 (\MSPL)$ yields a map
$\alpha: \MSPL \to H\intg$, and this map is a ring map.
Thus $\alpha$ makes the ring spectrum $H\intg$ 
into an $\MSPL$-module by taking the action map to be
\[ \MSPL \wedge H\intg
   \stackrel{\alpha \wedge \id}{\longrightarrow}
   H\intg \wedge H\intg
   \stackrel{\mu_H}{\longrightarrow}
   H\intg, \]
where $\mu_H$ is the product on $H\intg$.   
The induced map
\[ \alpha_*: \Omega^\SPL_n (Z) \cong \MSPL_n (Z)
  \longrightarrow H_n (Z;\intg) \]
is the Steenrod-Thom homomorphism that sends the bordism class of 
a singular PL manifold 
$[f:M^n \to Z] \in \Omega^\SPL_n (Z)$
to $f_* [M] \in H_n (Z;\intg)$.
We recall the following standard fact.
\begin{prop}
(Rudyak \cite[Prop. V.1.6]{rudyak}.)
Let $\tau: D\to E$ be a ring morphism of ring spectra.
Let $\gamma$ be an $(S^n,*)$-fibration equipped with a
$D$-orientation $u_D \in \widetilde{D}^n (\Th \gamma)$.
Then the image $\tau (u_D) \in \widetilde{E}^n (\Th \gamma)$
is an $E$-orientation of $\gamma$.
\end{prop}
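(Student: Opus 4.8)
The plan is to check the orientation condition one fiber at a time and reduce it to the elementary fact that a unital ring homomorphism carries units to units. Recall that $u_D \in \widetilde{D}^n(\Th\gamma)$ is a $D$-orientation (a $D$-orientation in Dold's sense) of the $(S^n,*)$-fibration $\gamma$ over a base $B$ precisely when, for every point $b \in B$, the restriction $i_b^* u_D \in \widetilde{D}^n(S^n)$ along the fiber inclusion $i_b \colon S^n = \gamma_b \hookrightarrow \Th\gamma$ generates $\widetilde{D}^n(S^n)$ as a module over $D^0(\pt) = \pi_0(D)$; equivalently, under the $n$-fold suspension identification $\widetilde{D}^n(S^n) \cong \pi_0(D)$ the element $i_b^* u_D$ corresponds to a unit of the ring $\pi_0(D)$. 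The same definition applies verbatim with $E$ in place of $D$.

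First I would record that, being a morphism of spectra, $\tau \colon D \to E$ induces a natural transformation of reduced cohomology theories $\widetilde{D}^{*}(-) \to \widetilde{E}^{*}(-)$, and in particular it commutes with the fiber restriction maps: $i_b^*\bigl(\tau(u_D)\bigr) = \tau\bigl(i_b^* u_D\bigr)$ in $\widetilde{E}^n(S^n)$ for every $b \in B$. Next I would invoke that $\tau$ is a \emph{ring} morphism: it sends the unit of $D$ to the unit of $E$, so after $n$-fold suspension the map induced by $\tau$ between $\widetilde{D}^n(S^n) \cong \pi_0(D)$ and $\widetilde{E}^n(S^n) \cong \pi_0(E)$ is exactly the ring homomorphism $\pi_0(\tau)$, compatibly with the chosen (unit) generators. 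Finally, a unital ring homomorphism sends units to units, so $\pi_0(\tau)$ carries the unit $i_b^* u_D$ to a unit of $\pi_0(E)$; that is, $i_b^*\bigl(\tau(u_D)\bigr)$ generates $\widetilde{E}^n(S^n)$. Since $b$ was arbitrary, $\tau(u_D)$ is an $E$-orientation of $\gamma$.

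The one step that requires care — and the place where the hypothesis ``ring morphism'' is genuinely used, rather than merely ``morphism of spectra'' — is the middle one: identifying the map induced by $\tau$ on the groups $\widetilde{D}^n(S^n)$ and $\widetilde{E}^n(S^n)$ with $\pi_0(\tau)$ in a way compatible with the normalization of generators. A bare map of spectra induces on $\pi_0$ only an additive homomorphism, which need not preserve $1$; it is precisely multiplicativity of $\tau$ (hence $\tau$ preserving the spectrum units, hence $\pi_0(\tau)$ being unital) that pins this down and makes ``generator maps to generator'' automatic. The remaining ingredients — naturality of the suspension isomorphism and of fiber restriction — are entirely routine, so I do not anticipate any real obstacle beyond this bookkeeping.
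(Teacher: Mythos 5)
The paper does not prove this statement at all: it is quoted as a standard fact with a citation to Rudyak (Prop.\ V.1.6), so there is no in-paper proof to compare against. Your argument is the standard one and is correct: orientability is a fiberwise condition, restriction to a fiber commutes with $\tau$ by naturality, and under the suspension identifications $\widetilde{D}^n(S^n)\cong\pi_0(D)$, $\widetilde{E}^n(S^n)\cong\pi_0(E)$ the induced map is $\pi_0(\tau)$, which, being a unital ring homomorphism, carries units to units (and in particular carries $\pm 1$ to $\pm 1$, which covers the stricter normalization of the Thom class used by some authors, including Rudyak). One small correction of emphasis: the identification of the map on $\widetilde{D}^n(S^n)\to\widetilde{E}^n(S^n)$ with $\pi_0(\tau)$, compatibly with the suspension isomorphisms, holds for \emph{any} map of spectra — it is not where multiplicativity enters; what the ring-morphism hypothesis genuinely buys is that $\pi_0(\tau)$ is \emph{unital}, so the canonical generator goes to the canonical generator and units go to units, exactly as you say in your final step.
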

We apply this Proposition to the ring morphism
$\alpha: \MSPL \to H\intg$ and to our microbundle $\mu$, which we
had already equipped with the $\MSPL$-orientation
$u_\SPL (\mu)$. By the Proposition,
the homomorphism
\[ \alpha: \RMSPL^{s-d} (\Th (\mu)) \longrightarrow 
     \widetilde{H}^{s-d} (\Th (\mu);\intg) \]
sends $u_\SPL (\mu)$ to
an $H\intg$-orientation
\begin{equation} \label{lem.uspltouz}
u_\intg (\mu) := \alpha (u_\SPL (\mu)) 
  \in \widetilde{H}^{s-d} (\Th (\mu);\intg).
\end{equation}
(This is the Thom class of $\mu$ in ordinary cohomology.)
Another standard fact from stable homotopy theory is:
\begin{lemma} \label{lem.tderingmorcapprods}
Let $D,E$ be ring spectra and $\tau:D\to E$ a ring morphism.
We consider $E$ as a $D$-module via the action map
\[  D \wedge E
   \stackrel{\tau \wedge \id}{\longrightarrow}
   E \wedge E
   \stackrel{\mu_E}{\longrightarrow} E. \] 
This module structure yields a cap product
$\cap_{D,E}: D^* (X) \otimes E_* (X) \to E_* (X).$   
The ring structure on $E$ yields a cap product
$\cap_E: E^* (X) \otimes E_* (X) \to E_* (X).$ 
Then the diagram
\[ \xymatrix@C=40pt{
D^* (X) \otimes E_* (X) \ar[r]^{\cap_{D,E}} \ar[d]_{\tau \otimes \id} 
  & E_* (X) \ar@{=}[d] \\
E^* (X) \otimes E_* (X) \ar[r]_{\cap_E} & E_* (X)
} \]
commutes.
\end{lemma}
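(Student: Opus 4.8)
The plan is to reduce the statement to a formal compatibility between the two $D$-module structures on $E$ that are in play, and then to unwind the definition of each cap product in terms of the coevaluation (diagonal) map and the relevant pairings. Recall that for any ring spectrum $R$ and any $R$-module $M$, the cap product $\cap_{R,M} \colon R^p(X)\otimes M_q(X)\to M_{q-p}(X)$ is defined by sending $u\otimes x$, with $u\colon \Sigma^\infty X_+ \to \Sigma^p R$ and $x\colon S^q \to M\wedge X_+$, to the composite
\[
S^q \xrightarrow{x} M\wedge X_+ \xrightarrow{\id\wedge\Delta} M\wedge X_+ \wedge X_+ \xrightarrow{\id\wedge u\wedge\id} M\wedge \Sigma^p R\wedge X_+ \xrightarrow{\mathrm{act}\wedge\id} \Sigma^p M\wedge X_+,
\]
where $\mathrm{act}\colon M\wedge R\to \Sigma^{-0}M$ (after a swap) is the module action and $\Delta\colon X_+\to X_+\wedge X_+$ is the reduced diagonal. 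So $\cap_E$ uses $\mathrm{act}=\mu_E$ (with $M=E$ viewed as a module over itself), while $\cap_{D,E}$ uses the action map $\mu_E\circ(\tau\wedge\id)$ from the hypothesis.

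First I would fix careful point-set models: let me take $\tau\colon D\to E$ to be a map of (commutative, say, or at least associative) ring spectra in whatever symmetric-monoidal category of spectra is being used, and represent classes in $D^*(X)$ and $E_*(X)$ by actual maps of spectra as above. Then the proof is a diagram chase. Starting from an element $v\otimes x$ with $v\in D^p(X)$, $x\in E_q(X)$, the top-then-right route through the square computes $\cap_{D,E}(v\otimes x)$ using the action $\mu_E\circ(\tau\wedge\id_E)\colon D\wedge E\to E$; the left-then-bottom route first replaces $v$ by $\tau\circ v \in E^p(X)$ and then applies $\cap_E$, which uses $\mu_E\colon E\wedge E\to E$. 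Inserting the map $\tau$ on the $D$-factor in the defining composite for $\cap_{D,E}$ and using naturality of the smash product (functoriality in each variable), one sees that $(\id_E\wedge(\tau\circ v)\wedge\id)$ equals $(\id_E\wedge\tau\wedge\id)\circ(\id_E\wedge v\wedge\id)$; then the identity $\mu_E\circ(\tau\wedge\id_E) = (\text{the $D$-action on } E)$ is precisely the definition of the $D$-module structure on $E$, so the two composites literally agree. Hence the square commutes on the nose, not merely up to homotopy, once suitable models are chosen; at the level of homotopy categories this gives the asserted commutativity.

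The only genuine points requiring care — and where I expect the main (modest) obstacle to lie — are bookkeeping ones: getting the suspension coordinates $\Sigma^p$ to line up consistently on both routes, and tracking the transposition isomorphisms that permute the $R$-factor past the $M$-factor so that the module action is applied in the correct slot. These are exactly the sign/coherence subtleties that make such "standard facts" tedious rather than deep. In practice I would phrase the argument by noting that both cap products are special cases of a single construction: given any pairing $R\wedge M\to M'$ of spectra, one gets $R^*(X)\otimes M_*(X)\to M'_*(X)$ by the formula above, and this construction is manifestly functorial in the pairing. The square in the Lemma is then obtained by applying this functoriality to the evident map of pairings from $(D\wedge E \xrightarrow{\tau\wedge\id} E\wedge E\xrightarrow{\mu_E}E)$ to $(E\wedge E\xrightarrow{\mu_E}E)$ over the identity of $E$ (on source and target) and $\tau$ on the contravariant variable. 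This reduces the whole statement to the (trivial) naturality of one construction, and one can cite Rudyak \cite{rudyak} or Switzer \cite{switzer} for the underlying formalism rather than re-deriving the coherence diagrams. I would therefore keep the proof to a few lines: recall the defining formula for $\cap$ from a pairing, observe both cap products are instances of it, and note that the square is the image of an obvious commuting square of pairings under this functorial construction.
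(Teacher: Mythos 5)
Your proposal is correct: the square commutes because both cap products are instances of the construction associated to a pairing, and the $D$-module action on $E$ is by definition $\mu_E\circ(\tau\wedge\id)$, so inserting $\tau$ into the $D$-slot is the same as first applying $\tau_*$ to the cohomology class. The paper offers no proof at all --- it records the lemma as a standard fact from stable homotopy theory --- so your definitional unwinding and the functoriality-in-the-pairing argument is exactly the standard justification (as in Switzer or Rudyak) that the paper implicitly defers to.
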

By this lemma and (\ref{lem.uspltouz}),
the transfer on ordinary homology is given by
\begin{align*}
\xi^! (-)
&= \rho_* (u_\SPL (\mu) \cap_{\MSPL, H\intg} T(\xi)_* \sigma (-)) \\
&= \rho_* (\alpha (u_\SPL (\mu)) \cap_{H\intg} T(\xi)_* \sigma (-)) \\
&= \rho_* (u_\intg (\mu) \cap_{H\intg} T(\xi)_* \sigma (-)).
\end{align*}
We summarize: The block bundle transfer (\ref{equ.etransferofp}) on ordinary
homology $E=H\intg$ is given by
\[ \xi^! = \rho_* (u_\intg (\mu) \cap T(\xi)_* \sigma (-)): H_n (B;\intg) 
   \longrightarrow H_{n+d} (X;\intg). \]

\subsection{Block Transfer on Witt Bordism}

Let $\Omega^\Witt_* (-)$ denote Witt bordism theory as defined
by Siegel in \cite{siegel}. Elements of $\Omega^\Witt_n (Z)$ are
Witt bordism classes of continuous maps $f: W^n \to Z$ defined on
an $n$-dimensional closed Witt space $W$.
Let $\MWITT$ be the Quinn spectrum associated to the ad-theory of Witt spaces,
representing Witt bordism via a natural equivalence
\begin{equation} \label{equ.mwittisomegawitt} 
 \MWITT_* (-) \cong
   \Omega^\Witt_* (-),
\end{equation}   
see Banagl-Laures-McClure \cite{blm}.
A weakly equivalent spectrum had been considered 
first by Curran in \cite{curran}. He verified that this spectrum is an
$\MSO$-module (\cite[Thm. 3.6, p. 117]{curran}).
The product of
two Witt spaces is again a Witt space. This implies essentially that
$\MWITT$ is a ring spectrum; for more details see \cite{blm}.
(There, we focused on the spectrum
$\operatorname{MIP}$ representing
bordism of integral intersection homology Poincar\'e spaces studied by
Goresky and Siegel in \cite{gorsie} and by Pardon in \cite{pardon}, 
but everything works in an analogous, indeed simpler, manner for $\rat$-Witt spaces.)
Every oriented PL manifold is a Witt space. Hence there is a map
\[ \phi_W: \MSPL \longrightarrow \MWITT, \]
which, using the methods of ad-theories and Quinn spectra
employed in \cite{blm}, can be constructed to be multiplicative.
Using this ring map, the spectrum $\MWITT$
becomes an $\MSPL$-module with action map
\[ \MSPL \wedge \MWITT \longrightarrow \MWITT \]
given by the composition
\[ \MSPL \wedge \MWITT \stackrel{\phi_W \wedge \id}{\longrightarrow}
   \MWITT \wedge \MWITT \longrightarrow \MWITT. \]
(The product of a Witt space
and an oriented PL manifold is again a Witt space.)
In particular, there is a cap product
\begin{equation} \label{equ.capprodmsplmwitt}
\cap: \MSPL^j (Z,Y) \otimes \MWITT_n (Z,Y) 
   \longrightarrow \MWITT_{n-j} (Z) 
\end{equation}   
and a transfer
\[ \xi^!: \MWITT_n (B) \longrightarrow \MWITT_{n+d} (X), \]
where $\xi$ is our $F$-block bundle over $B$, $d=\dim F$.

Let $C$ be any finite ball complex with subcomplex $D\subset C$
and suppose that $Z=|C|,$ $Y=|D|$.
By Buoncristiano-Rourke-Sanderson \cite{buonrs}, 
a geometric description of the above cap product
(\ref{equ.capprodmsplmwitt}) is given as follows:
One uses the canonical identifications to think
of the cap product as a product
\[ \cap: \Omega_\SPL^{-j} (C,D) \otimes \Omega^\Witt_n (|C|,|D|) \longrightarrow
  \Omega^\Witt_{n-j} (|C|). \]
Let us first discuss the absolute case $D=\varnothing$, and then
return to the relative one.
If $C$ is simplicial, $f: W\to C$ is a simplicial map from an
$n$-dimensional triangulated closed Witt space $W$ to
$C$, and $\eta^q$ is a $q$-mock bundle over $C$ (with $q=-j$), 
then one has (cf. \cite[p. 29]{buonrs})
\[ [\eta^q/C] \cap [f:W\to |C|] =
  [g: E(f^* \eta)\to |C|] \in \Omega^\Witt_{n-j} (|C|),  \]
where $g$ is the diagonal arrow in the cartesian diagram
\[ \xymatrix{
E(f^* \eta) \ar[r] \ar[d] \ar[rd]^g & E(\eta) \ar[d]^p \\
W \ar[r]^f & C.
} \]
Here, one uses the fact (\cite[II.2, p. 23f]{buonrs}) that mock
bundles over simplicial complexes admit pullbacks under simplicial maps.
By Lemma \ref{lem.mockbundleoverwittiswitt},
$E(f^* \eta)$ is a closed Witt space.
For the relative case, we observe that
if $(W,\partial W)$ is a compact Witt space with boundary,
$f:(W,\partial W)\to (|C|,|D|)$ maps the boundary into $|D|$,
and $\eta|_D = \varnothing$, then
$f^* \eta|_{\partial W} = \varnothing$
and so $\partial E(f^* \eta) = \varnothing,$
i.e. the Witt space $E(f^* \eta)$ is \emph{closed}.
Hence it defines an absolute bordism class.

In Section \ref{sec.pullbacktransfer},
we provide a more direct geometric description of the 
Witt bordism transfer
$\xi^!: \MWITT_n (B) \to \MWITT_{n+d} (X)$
as a pullback transfer 
$\xi^!_\PB: \Omega^\Witt_n (B) \to \Omega^\Witt_{n+d} (X)$.

\subsection{Block Transfer on $\syml$-Homology}

We write $\syml = \syml (\intg) = \syml \langle 0 \rangle (\intg)$ for Ranicki's
connected symmetric algebraic $L$-spectrum
with homotopy groups $\pi_n (\syml)=L^n (\intg),$ the symmetric
$L$-groups of the ring of integers; see e.g. \cite{ranickialtm}. 
Technically, we shall use the construction of $\syml$ as the Quinn spectrum of
a suitable ad-theory, see 
Banagl-Laures-McClure \cite{blm}. 
That construction is weakly equivalent to Ranicki's.
Localization $\intg \to \rat$ induces a map
$\epsilon_\rat:\syml (\intg) \to \syml (\rat),$ 
and $\pi_n (\syml (\rat))=L^n (\rat)$ with
\[ L^n (\rat) \cong \begin{cases}
 \intg \oplus (\intg/_2)^\infty \oplus (\intg/_4)^\infty,& n\equiv 0 (4) \\
   0,& n\not\equiv 0 (4).
\end{cases} \]
The spectra $\syml (\intg)$ and $\syml (\rat)$ are ring spectra.
Let $\MSTOP$ be the Thom spectrum associated to oriented topological
$(\real^n, 0)$-bundles. There is a canonical forget map
\[ \phi_F: \MSPL \longrightarrow \MSTOP. \]
Ranicki constructed a map
\[ \sigma^*: \MSTOP \longrightarrow \syml, \]
see \cite[p. 290]{ranickitotsurgob}, and 
in \cite{blm}, we constructed a map
\[ \tau: \MWITT \longrightarrow \syml (\rat). \]
(Actually, we even constructed an integral map
$\operatorname{MIP} \to \syml,$ where $\operatorname{MIP}$ represents
bordism of integral intersection homology Poincar\'e spaces,
but everything works in the same manner for Witt theory, if one
uses the $\syml$-spectrum with rational coefficients.)
This map is multiplicative, i.e. a ring map, as shown
in \cite[Section 12]{blm}, and the diagram
\begin{equation} \label{equ.esigmaphifistauphiw}
\xymatrix{
& \MSTOP \ar[r]^{\sigma^*} & \syml (\intg) \ar[dd]^{\epsilon_\rat} \\
\MSPL \ar[ru]^{\phi_F} \ar[rd]_{\phi_W} & & \\
& \MWITT \ar[r]_\tau & \syml (\rat)
} \end{equation}
homotopy commutes, since it comes from a
commutative diagram of ad-theories under applying
the symmetric spectrum functor $\mathbf{M}$ of 
Laures and McClure \cite{lauresmcclure}.
Using the ring map $\tau \phi_W: \MSPL \to \syml (\rat)$, 
the spectrum $\syml (\rat)$
becomes an $\MSPL$-module with action map
\[ \MSPL \wedge \syml (\rat) \longrightarrow \syml (\rat) \]
given by the composition
\[ \MSPL \wedge \syml (\rat) 
   \stackrel{(\tau \phi_W) \wedge \id}{\longrightarrow}
   \syml (\rat) \wedge \syml (\rat) \longrightarrow \syml (\rat). \]
The associated transfer is
\[ \xi^!: \syml (\rat)_n (B) \longrightarrow \syml (\rat)_{n+d} (X), \]
with $\xi$ our $F$-block bundle over $B$, $d=\dim F$. \\

We shall show that the block bundle transfer $\xi^!$ commutes
with the passage, under $\tau_*$, from Witt bordism theory to 
$\syml (\rat)$-homology.
The homotopy commutative diagram
\[ \xymatrix{
\MSPL \wedge \MWITT \ar[r]^{\id \wedge \tau} \ar[d]_{\phi_W \wedge \id} & 
   \MSPL \wedge \syml (\rat) \ar[d]^{(\tau \phi_W)\wedge \id} \\
\MWITT \wedge \MWITT \ar[r]^{\tau \wedge \tau} \ar[d] 
   & \syml (\rat) \wedge \syml (\rat) \ar[d] \\
\MWITT \ar[r]^\tau & \syml (\rat)
} \]
shows that $\tau: \MWITT \to \syml (\rat)$ is an
$\MSPL$-module morphism.
In the proof of Lemma \ref{lem.phitau} below,
we shall use the following standard fact:
\begin{lemma} \label{lem.caponemodmor}
If $E$ is a ring spectrum, $F,F'$ module spectra over $E$ and
$\phi: F\to F'$ an $E$-module morphism, then the diagram
\[ \xymatrix{
E^m (X,A) \otimes F_n (X,A) \ar[r]^>>>>>\cap  \ar[d]_{\id \otimes \phi_*}
  & F_{n-m} (X) \ar[d]^{\phi_*} \\
E^m (X,A) \otimes F'_n (X,A) \ar[r]^>>>>>\cap & F'_{n-m} (X)
} \]
commutes: if $u\in E^m (X,A),$ and $a\in F_n (X,A)$, then
\[ \phi_* (u\cap a) = u\cap \phi_* (a). \]
\end{lemma}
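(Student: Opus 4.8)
The plan is to unwind the spectrum-level construction of the cap product and to observe that the two sides of the asserted identity are represented by composites of spectrum maps differing only in the position of $\phi$ relative to the module action map $E\wedge F\to F$; interchanging the two is precisely the hypothesis that $\phi$ is an $E$-module morphism.

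First I would fix convenient models: represent a class in $E^m(X,A)$ by a map $\widetilde u\colon \Sigma^\infty(X/A)\to\Sigma^m E$ and a class $a\in F_n(X,A)$ by a map $\widetilde a\colon S^n\to F\wedge(X/A)$ (after the usual passage to a cofinal system of suspensions). Writing $\lambda\colon E\wedge F\to F$ and $\lambda'\colon E\wedge F'\to F'$ for the module actions, the relative cap product $\cap\colon E^m(X,A)\otimes F_n(X,A)\to F_{n-m}(X)$ is, up to the standard suspension identifications, the homotopy class of
\[
\begin{aligned}
S^n &\xrightarrow{\widetilde a} F\wedge(X/A)
 \xrightarrow{\id_F\wedge\overline\Delta} F\wedge(X/A)\wedge X^+ \\
 &\xrightarrow{\id_F\wedge\widetilde u\wedge\id} F\wedge\Sigma^m E\wedge X^+
 \xrightarrow{t\wedge\id} \Sigma^m E\wedge F\wedge X^+
 \xrightarrow{\lambda\wedge\id} \Sigma^m F\wedge X^+,
\end{aligned}
\]
where $\overline\Delta\colon X/A\to(X/A)\wedge X^+$ is the reduced relative diagonal induced by $x\mapsto(x,x)$ and $t$ denotes the symmetry isomorphism of the smash product; the same formula with $\lambda'$ in place of $\lambda$ computes the cap product with values in $F'_{n-m}(X)$.

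Next I would write out the two sides. The class $\phi_*(u\cap a)$ is represented by postcomposing the displayed composite (formed with $F$ and $\lambda$) with $\phi\wedge\id_{X^+}$, whereas $u\cap\phi_*(a)$ is represented by running the same construction with $F'$ and $\lambda'$, starting from $(\phi\wedge\id)\circ\widetilde a$. Since $\overline\Delta$, $\widetilde u$ and $t$ only touch the $(X/A)$ and $X^+$ smash factors, the map $\phi$ applied to the $F$-factor slides to the right past all of them; hence $u\cap\phi_*(a)$ is equally represented by the displayed composite with $\id_{\Sigma^m E}\wedge\phi\wedge\id_{X^+}$ inserted immediately after $t\wedge\id$ and with $\lambda'\wedge\id$ at the end. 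Comparing, $\phi_*(u\cap a)$ and $u\cap\phi_*(a)$ are the two ways around the square
\[
\xymatrix{
E\wedge F \ar[r]^{\id_E\wedge\phi} \ar[d]_{\lambda} & E\wedge F' \ar[d]^{\lambda'} \\
F \ar[r]_{\phi} & F'
}
\]
smashed with $X^+$ and suspended $m$ times, and this square commutes up to homotopy by the definition of an $E$-module morphism. That yields $\phi_*(u\cap a)=u\cap\phi_*(a)$.

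I do not expect a genuine obstacle here: the content is entirely formal. The one thing to watch is coherence bookkeeping — fixing a single consistent choice of the suspension/desuspension isomorphisms, the symmetry $t$, and the identification of the pair $(X,A)$ with its quotient $X/A$ — so that the two composites are compared literally rather than merely up to an unexamined automorphism. Once such a model is in place (e.g. that of \cite{switzer} or \cite{rudyak}), the verification is the diagram chase above, and the relative diagonal absorbs the pair $(X,A)$ with no additional input.
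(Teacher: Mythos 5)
The paper records this lemma as a standard fact and gives no proof at all, so there is no argument of the author's to compare against; your proposal supplies the missing verification, and it is correct. Unwinding the cap product as the composite through the relative diagonal $X/A\to (X/A)\wedge X^{+}$, the slant against $\widetilde u$, the symmetry, and the action map, and then observing that $\phi$ slides past every map not touching the $F$-factor so that the whole comparison collapses to the homotopy-commutative square $\phi\circ\lambda\simeq\lambda'\circ(\id_{E}\wedge\phi)$ defining an $E$-module morphism, is exactly the standard argument implicit in the Switzer/Rudyak construction of the cap product that the paper is invoking. Your closing caveat is the right one: the only real work is fixing a single model of the cap product (order of smash factors, choice of diagonal, consistent suspension identifications) so that the two composites are compared on the nose rather than up to unexamined identifications; with that in place the diagram chase closes and no further input is needed.
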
 

\begin{lemma} \label{lem.phitau}
The Thom homomorphisms $\Phi$ of an oriented PL microbundle $\mu$
of rank $m$ over a compact polyhedron $X$
commute with the passage from
Witt bordism to $\syml (\rat)$-homology, that is, the diagram
\[ \xymatrix{
\widetilde{\MWITT}_n (\Th (\mu)) \ar[r]^\Phi \ar[d]_{\tau_*}
  & \MWITT_{n-m} (X) \ar[d]_{\tau_*} \\
\redsyml (\rat)_n (\Th (\mu)) \ar[r]^\Phi 
  & \syml (\rat)_{n-m} (X)
} \]
commutes.
\end{lemma}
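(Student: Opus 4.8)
The Thom homomorphism is by definition $\Phi = \rho_* \circ (u_\SPL(\mu) \cap_{\MSPL, E} -)$ for an $\MSPL$-module spectrum $E$. The proof is a matter of unwinding this definition for $E = \MWITT$ and $E = \syml(\rat)$ and checking compatibility with $\tau_*$ term by term. First I would recall that $\tau: \MWITT \to \syml(\rat)$ is an $\MSPL$-module morphism; this was established just above the statement by the displayed homotopy commutative diagram relating the module action maps via $\phi_W$ and $\tau\phi_W$. Thus Lemma \ref{lem.caponemodmor} applies with $E = \MSPL$, $F = \MWITT$, $F' = \syml(\rat)$, $\phi = \tau$, $u = u_\SPL(\mu) \in \MSPL^m(N, \partial N)$: for $a \in \widetilde{\MWITT}_n(\Th(\mu)) \cong \MWITT_n(N,\partial N)$ we get $\tau_*(u_\SPL(\mu) \cap a) = u_\SPL(\mu) \cap \tau_*(a)$ in $\MWITT_{n-m}(N) \to \syml(\rat)_{n-m}(N)$.

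**Key steps in order.** (1) Identify the reduced homology of the Thom space with the relative group $E_*(N, \partial N)$ naturally in $E$, so that $\tau_*$ on $\widetilde{\MWITT}_n(\Th(\mu))$ corresponds to $\tau_*$ on $\MWITT_n(N,\partial N)$; this naturality is immediate since the identification is induced by a map of spaces. (2) Apply Lemma \ref{lem.caponemodmor} to commute $\tau_*$ past the cap product with $u_\SPL(\mu)$. (3) Commute $\tau_*$ past $\rho_*$: recall $\rho_*$ is the inverse of the isomorphism $E_*(X) \to E_*(N)$ induced by the zero-section inclusion $X \hookrightarrow N$, which is natural in the spectrum $E$; hence $\tau_* \rho_* = \rho_* \tau_*$. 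Composing (1)–(3) gives the desired commutativity $\Phi \circ \tau_* = \tau_* \circ \Phi$. One should also note, for the $\syml(\rat)$-side of the diagram, that the module structure on $\syml(\rat)$ used to define its Thom homomorphism $\Phi$ is exactly the one coming from $\tau\phi_W$, which is consistent with the factorization through $\tau$ — this is what makes the two $\cap$'s in Lemma \ref{lem.caponemodmor} the correct ones.

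**Main obstacle.** There is no serious obstacle; the content is entirely formal, resting on (a) $\tau$ being a module morphism — already proved via the displayed diagram — and (b) the naturality in the spectrum variable of the three ingredients of $\Phi$: the Thom-space/relative-group identification, the cap product, and the zero-section isomorphism $\rho_*$. The one point requiring a word of care is keeping straight which $\MSPL$-module structures are in play: the cap product $\cap_{\MSPL,\MWITT}$ uses the action via $\phi_W$, while $\cap_{\MSPL,\syml(\rat)}$ uses the action via $\tau\phi_W$; the commuting square defining "$\tau$ is a module morphism'' is precisely the compatibility of these two, so Lemma \ref{lem.caponemodmor} is directly applicable. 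I would therefore present the proof as a short diagram chase assembling these three naturality squares, invoking Lemma \ref{lem.caponemodmor} for the middle one.
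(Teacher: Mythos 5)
Your proposal is correct and follows essentially the same route as the paper: the paper's proof likewise notes that $\tau_*$, being a natural transformation of homology theories, commutes with $\rho_*$, and then invokes Lemma \ref{lem.caponemodmor} (with $\tau$ an $\MSPL$-module morphism via the diagram displayed just before the statement) to commute $\tau_*$ past the cap product with $u_\SPL(\mu)$. Your additional remarks on the Thom-space/relative-group identification and on which module structures enter the two cap products are exactly the implicit bookkeeping in the paper's shorter argument.
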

\begin{proof}
As $\tau_*$ is a natural transformation of homology theories,
it commutes with the isomorphism $\rho_*$.
Since $\tau$ is an $\MSPL$-module morphism, Lemma \ref{lem.caponemodmor}
applies to give
\begin{align*}
\tau_* \Phi
&= \tau_* \rho_* (u \cap -) = \rho_* \tau_* (u \cap -) \\
&= \rho_* (u \cap \tau_* (-)) = \Phi \tau_*,
\end{align*}
where $u=u_\SPL (\mu)$.
\end{proof}

\begin{prop} \label{prop.transftaucomm}
Let $F$ be a closed oriented $d$-dimensional PL manifold and let 
$\xi$ be an oriented $F$-block bundle with total space $X$ over the
compact polyhedral base $B$.
Then the diagram
\[ \xymatrix{
\MWITT_n (B) \ar[r]^{\xi^!} \ar[d]_{\tau_*}
  & \MWITT_{n+d} (X) \ar[d]_{\tau_*} \\
\syml (\rat)_n (B) \ar[r]^{\xi^!} 
  & \syml (\rat)_{n+d} (X)
} \]
commutes.
\end{prop}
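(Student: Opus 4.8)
The plan is to unwind the definition of the block bundle transfer $\xi^!$ given in Section \ref{sec.bundletransferstable} and check that each of its three constituent maps commutes with the natural transformation $\tau_* \colon \MWITT_* (-) \to \syml (\rat)_* (-)$. Recall that $\xi^! = \Phi \circ T(\xi)_* \circ \sigma$, where $\sigma$ is the suspension isomorphism, $T(\xi)_*$ is induced by the Umkehr map of Section \ref{sec.umkehrmap}, and $\Phi$ is the Thom homomorphism of the stable vertical normal microbundle $\mu$ of $\xi$, of rank $m = s-d$. So the claim will follow by pasting together three commuting squares along the top and bottom rows
\[ \MWITT_n (B) \xrightarrow{\;\sigma\;} \widetilde{\MWITT}_{n+s} (S^s B^+) \xrightarrow{\;T(\xi)_*\;} \widetilde{\MWITT}_{n+s} (\Th (\mu)) \xrightarrow{\;\Phi\;} \MWITT_{n+d} (X) \]
and the analogous row for $\syml (\rat)$, with vertical maps $\tau_*$ throughout.

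First I would treat the left-hand square, for $\sigma$. Since $\tau_* $ is a natural transformation of generalized homology theories, it is in particular compatible with the suspension isomorphisms of the two theories (these being part of the structure of a homology theory, or alternatively induced by the canonical map $S^1 \wedge (-) \to (-)$ on spectra); hence $\tau_* \circ \sigma = \sigma \circ \tau_*$. Next, the middle square, for $T(\xi)_*$: here $T(\xi) \colon S^s B^+ \to \Th (\mu) = \Th (\nu_\theta)$ is a fixed pointed map of spaces, so $T(\xi)_*$ is induced by a map of spaces, and naturality of $\tau_*$ with respect to continuous maps immediately gives $\tau_* \circ T(\xi)_* = T(\xi)_* \circ \tau_*$. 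Finally, the right-hand square, for the Thom homomorphism $\Phi$, is precisely the content of Lemma \ref{lem.phitau}, which was established using that $\tau \colon \MWITT \to \syml (\rat)$ is an $\MSPL$-module morphism (via the ring map $\tau \phi_W$ endowing $\syml (\rat)$ with its $\MSPL$-module structure, as displayed just before Lemma \ref{lem.caponemodmor}) together with Lemma \ref{lem.caponemodmor} on the compatibility of cap products with module morphisms and the naturality of $\rho_*$.

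Composing the three squares, we obtain $\tau_* \circ \xi^! = \tau_* \circ \Phi \circ T(\xi)_* \circ \sigma = \Phi \circ T(\xi)_* \circ \sigma \circ \tau_* = \xi^! \circ \tau_*$, which is the assertion. I do not expect a genuine obstacle here: the only substantive input is Lemma \ref{lem.phitau}, which has already been proved, and it rests in turn on the verification — recorded in the diagram preceding the statement of Lemma \ref{lem.phitau} — that $\tau$ really is an $\MSPL$-module morphism and not merely a map of spectra. The one point deserving a sentence of care is to make sure the $\MSPL$-module structures on $\MWITT$ and on $\syml (\rat)$ are the ones used to define the respective transfers $\xi^!$ (via $\phi_W$ and via $\tau \phi_W$), so that the module morphism hypothesis of Lemma \ref{lem.phitau} is literally the relevant one; this is immediate from the definitions in the preceding subsections. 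The argument is therefore a short diagram chase once Lemma \ref{lem.phitau} is in hand.
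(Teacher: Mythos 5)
Your proposal is correct and follows essentially the same route as the paper: the paper likewise decomposes $\xi^!$ as $\Phi \circ T(\xi)_* \circ \sigma$, notes that the suspension and Umkehr-map squares commute because $\tau_*$ is a natural transformation of homology theories, and concludes with Lemma \ref{lem.phitau} for the Thom-homomorphism square. Your extra remark about matching the $\MSPL$-module structures is a sensible, if brief, amplification of what the paper leaves implicit.
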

\begin{proof}
Let $\mu$ be the stable vertical normal microbundle of $\xi$.
The right hand square of the diagram
\[ \xymatrix{
\MWITT_n (B) \ar[r]^<<<<{\cong}_<<<<\sigma \ar[d]_{\tau_*}
  & \widetilde{\MWITT}_{n+s} (S^s B^+) \ar[d]_{\tau_*} \ar[r]^{T(\xi)_*}
  & \widetilde{\MWITT}_{n+s} (\Th (\mu)) \ar[d]^{\tau_*} \\
\syml (\rat)_n (B) \ar[r]^<<<<<{\cong}_<<<<<\sigma
  & \redsyml (\rat)_{n+s} (S^s B^+) \ar[r]^{T(\xi)_*} 
  & \redsyml (\rat)_{n+s} (\Th (\mu))
} \]
commutes, as $\tau_*$ is a natural transformation of homology theories.
The left hand square, involving the suspension isomorphism $\sigma$,
commutes for the same reason.
The statement now follows from Lemma \ref{lem.phitau}.
\end{proof}

An oriented topological $(\real^m, 0)$-bundle $\alpha$ over a
CW complex $Z$, classified by a map $Z\to \BSTOP_m$,
possesses a Thom class 
\[ u_\STOP (\alpha) \in \RMSTOP^m (\Th (\alpha)) \]
in oriented topological cobordism.
The next auxiliary result on compatibility of Thom classes is standard and 
e.g. recorded as Lemma 3.7 in \cite{banaglnyjm}.
\begin{lemma} \label{lem.usplustop}
Let $\alpha$ be an oriented PL $(\real^m,0)$-bundle.
On cobordism groups, the homomorphism
\[ \phi_F: \RMSPL^m (\Th (\alpha)) \longrightarrow
              \RMSTOP^m (\Th (\alpha_\TOP)) \]
induced by the canonical map 
$\phi_F: \MSPL \to \MSTOP$
sends the Thom class of $\alpha$ to the Thom class of the
underlying oriented topological $(\real^m,0)$-bundle $\alpha_\STOP$,
\[  \phi_F (u_\SPL (\alpha)) = u_\STOP (\alpha_\STOP). \]              
\end{lemma}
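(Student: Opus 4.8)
The plan is to unwind the definition of the two Thom classes and reduce the statement, by naturality, to the defining property of the map $\phi_F\colon\MSPL\to\MSTOP$ on the universal bundles. Recall that a classifying map $c\colon Z\to\BSPL_m$ of $\alpha$ is covered by a bundle map, which induces on Thom spaces a pointed map $\widehat{c}\colon\Th(\alpha)\to\Th(\gamma^\SPL_m)$, and $u_\SPL(\alpha)\in\RMSPL^m(\Th(\alpha))=[\Sigma^\infty\Th(\alpha),\Sigma^m\MSPL]$ is represented by $\Sigma^\infty\widehat{c}$ followed by the spectrum structure map $\Sigma^\infty\Th(\gamma^\SPL_m)\to\Sigma^m\MSPL$; likewise $u_\STOP(\alpha_\STOP)$ is represented by $\Sigma^\infty\widehat{c}_\STOP$ followed by $\Sigma^\infty\Th(\gamma^\STOP_m)\to\Sigma^m\MSTOP$, where $\widehat{c}_\STOP\colon\Th(\alpha_\STOP)\to\Th(\gamma^\STOP_m)$ comes from a classifying map of the underlying topological bundle $\alpha_\STOP$. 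The forget map $\phi_F$ is, in degree $m$, induced by the map of Thom spaces $\Th(f_m)\colon\Th(\gamma^\SPL_m)\to\Th(\gamma^\STOP_m)$ attached to the bundle map from the underlying topological bundle of $\gamma^\SPL_m$ to $\gamma^\STOP_m$ (covering $f_m\colon\BSPL_m\to\BSTOP_m$); by construction $\Th(f_m)$ is compatible with the structure maps, so that the square comparing the two maps $\Sigma^\infty\Th(\gamma^\SPL_m)\to\Sigma^m\MSPL$ and $\Sigma^\infty\Th(\gamma^\STOP_m)\to\Sigma^m\MSTOP$ via $\phi_F$ commutes on the nose.

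First I would use that square to rewrite $\phi_F(u_\SPL(\alpha))$ as the stable class represented by the composite $\Sigma^\infty\Th(\alpha)\xrightarrow{\Sigma^\infty\widehat{c}}\Sigma^\infty\Th(\gamma^\SPL_m)\xrightarrow{\Sigma^\infty\Th(f_m)}\Sigma^\infty\Th(\gamma^\STOP_m)\to\Sigma^m\MSTOP$. Comparing with the representative of $u_\STOP(\alpha_\STOP)$, and noting that $\alpha$ and $\alpha_\STOP$ have the same total spaces and hence $\Th(\alpha)=\Th(\alpha_\STOP)=\Th(\alpha_\TOP)$, it then suffices to show $\Th(f_m)\circ\widehat{c}\simeq\widehat{c}_\STOP$ as pointed maps $\Th(\alpha_\STOP)\to\Th(\gamma^\STOP_m)$. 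But both of these are induced by bundle maps from the single topological bundle $\alpha_\STOP$ into the universal bundle $\gamma^\STOP_m$ (the left one being the composite $\alpha_\STOP\to(\gamma^\SPL_m)_\STOP\to\gamma^\STOP_m$). Such bundle maps cover two classifying maps of $\alpha_\STOP$, namely $f_m\circ c$ and the chosen classifier, which are homotopic; a homotopy between them yields a bundle over $Z\times I$, and homotopy-functoriality of the Thom space construction turns this into the desired homotopy of maps of Thom spaces. Reading off the composites then gives $\phi_F(u_\SPL(\alpha))=u_\STOP(\alpha_\STOP)$.

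I do not expect a genuine obstacle here; the only point requiring care is bookkeeping about orientations, i.e. that $\phi_F$ is arranged so that the underlying topological bundle of the oriented $\gamma^\SPL_m$ carries precisely the orientation defining $u_\STOP$, and that the homotopy of bundle maps above can be taken orientation-preserving. Both are part of the standard conventions: the entire lemma is the universal case $Z=\BSPL_m$, $\alpha=\gamma^\SPL_m$, i.e. $\phi_F(u_\SPL(\gamma^\SPL_m))=u_\STOP((\gamma^\SPL_m)_\STOP)$, propagated to arbitrary $\alpha$ by the naturality of Thom classes and of $\phi_F$ in the space variable, and that universal identity is exactly how the oriented ring map $\phi_F$ is constructed. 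Hence the work lies entirely in recording the identifications precisely rather than in any substantive argument, which is why the statement is standard.
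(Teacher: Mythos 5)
Your argument is correct: unwinding both Thom classes via classifying maps and their induced maps of Thom spaces, using that $\phi_F$ is assembled from the universal bundle comparison $\Th(\gamma^\SPL_m)\to\Th(\gamma^\STOP_m)$, and then invoking homotopy uniqueness of classifying maps for the underlying topological bundle is exactly the standard verification. The paper itself offers no proof of this lemma, citing it as a standard fact (Lemma 3.7 of \cite{banaglnyjm}), so your write-up simply supplies the routine naturality argument that the paper delegates to that reference.
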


Following \cite[pp. 290, 291]{ranickitotsurgob}, 
an oriented topological $(\real^m,0)$-bundle
$\alpha$ has a canonical $\syml$-cohomology orientation
\[ u_\mbl (\alpha) \in {\redsyml}^m (\Th (\alpha)), \]
which we shall also refer to as the 
\emph{$\syml$-cohomology Thom class} of $\alpha$, defined by
\begin{equation} \label{equ.umblissigmaustop}
u_\mbl (\alpha) := \sigma^* (u_\STOP (\alpha)). 
\end{equation}
The morphism of spectra 
$\epsilon_\rat: \syml (\intg)\to \syml (\rat)$ coming from
localization induces a homomorphism
\[ \redsyml^m (\Th (\alpha)) \longrightarrow 
   \redsyml (\rat)^m (\Th (\alpha)). \]
We denote the image of $u_\mbl (\alpha)$ under this map again by
$u_\mbl (\alpha) \in \redsyml (\rat)^m (\Th (\alpha)).$

\begin{lemma} \label{lem.umsotoumbl}
Let $\alpha$ be an oriented PL $(\real^m,0)$-bundle.
The homomorphism
\[ \tau \phi_W: 
  \RMSPL^m (\Th (\alpha)) \longrightarrow
              \syml (\rat)^m (\Th (\alpha)) \]
induced by the ring morphism 
$\tau \phi_W: \MSPL \to \syml (\rat)$
sends the $\MSPL$-cohomology Thom class of $\alpha$ to the 
$\syml$-cohomology Thom class of (the underlying topological bundle of) $\alpha$,
\[ \tau \phi_W (u_\SPL (\alpha)) = u_\mbl (\alpha). \]  
\end{lemma}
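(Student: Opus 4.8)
The statement to be proven, $\tau\phi_W(u_\SPL(\alpha)) = u_\mbl(\alpha)$, asserts that the composite ring map $\MSPL \xrightarrow{\phi_W} \MWITT \xrightarrow{\tau} \syml(\rat)$ carries the canonical $\MSPL$-Thom class of an oriented PL $(\real^m,0)$-bundle to the canonical $\syml$-Thom class. The plan is to recognize both sides as images of the topological Thom class $u_\STOP(\alpha_\STOP)$ under two a priori different routes through the homotopy-commutative square (\ref{equ.esigmaphifistauphiw}), and then invoke that commutativity. I would carry this out in three steps.

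First I would rewrite the right-hand side. By definition (\ref{equ.umblissigmaustop}) we have $u_\mbl(\alpha) = \epsilon_\rat\,\sigma^*(u_\STOP(\alpha_\STOP))$ (applying the localization map $\epsilon_\rat$ as in the paragraph following Lemma~\ref{lem.usplustop}), and by Lemma~\ref{lem.usplustop} we have $u_\STOP(\alpha_\STOP) = \phi_F(u_\SPL(\alpha))$. Hence
\[
u_\mbl(\alpha) = (\epsilon_\rat \circ \sigma^* \circ \phi_F)_*\,u_\SPL(\alpha) \in \syml(\rat)^m(\Th(\alpha)),
\]
so the right-hand side is the image of $u_\SPL(\alpha)$ under the map on reduced cobordism/cohomology of $\Th(\alpha)$ induced by the composite spectrum map $\MSPL \xrightarrow{\phi_F} \MSTOP \xrightarrow{\sigma^*} \syml(\intg) \xrightarrow{\epsilon_\rat} \syml(\rat)$, which is precisely the top-then-right path around the square (\ref{equ.esigmaphifistauphiw}).

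Second, the left-hand side $\tau\phi_W(u_\SPL(\alpha))$ is, tautologically, the image of $u_\SPL(\alpha)$ under the map induced on $\RMSPL^m(\Th(\alpha))$ by the composite spectrum map $\MSPL \xrightarrow{\phi_W} \MWITT \xrightarrow{\tau} \syml(\rat)$, i.e.\ the left-then-bottom path around the same square. Since (\ref{equ.esigmaphifistauphiw}) homotopy commutes, the two composite spectrum maps $\epsilon_\rat\sigma^*\phi_F$ and $\tau\phi_W$ from $\MSPL$ to $\syml(\rat)$ are homotopic, hence induce the same homomorphism on $\widetilde{(-)}^m(\Th(\alpha))$; applying this common homomorphism to $u_\SPL(\alpha)$ gives the equality of the two classes. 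The only mild subtlety, which I would note but not belabor, is that one is comparing classes in $\syml(\rat)^m(\Th(\alpha))$ (as opposed to a reduced group over $\syml(\intg)$), so one applies $\epsilon_\rat$ uniformly to both sides first; this is harmless because, as stated in the excerpt, $u_\mbl(\alpha) \in \redsyml(\rat)^m(\Th(\alpha))$ is \emph{defined} to be the $\epsilon_\rat$-image of the integral class.

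I expect no genuine obstacle here: the proof is a diagram chase, and every ingredient — the definition of $u_\mbl$ via $\sigma^*$, the identity $\phi_F(u_\SPL) = u_\STOP$ of Lemma~\ref{lem.usplustop}, and the homotopy-commutativity of (\ref{equ.esigmaphifistauphiw}) coming from a commutative diagram of ad-theories — is already available from the excerpt. If anything requires a word of care, it is making sure that "Thom class of a PL bundle in each theory" is in all four corners the class obtained by functoriality from the classifying map into the relevant Thom spectrum, so that the arrows in (\ref{equ.esigmaphifistauphiw}) literally transport one universal Thom class to another; this compatibility is exactly the content of Lemmas~\ref{lem.usplustop} and the construction of $\sigma^*$ and $\tau$ in \cite{ranickitotsurgob}, \cite{blm}, so I would cite those rather than reprove it.
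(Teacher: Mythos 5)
Your proposal is correct and is essentially the paper's own argument: both proofs combine the definition (\ref{equ.umblissigmaustop}) of $u_\mbl$, the identity $\phi_F(u_\SPL(\alpha)) = u_\STOP(\alpha_\STOP)$ from Lemma \ref{lem.usplustop}, and the homotopy commutativity of diagram (\ref{equ.esigmaphifistauphiw}) to identify the two composite paths $\tau\phi_W$ and $\epsilon_\rat\sigma^*\phi_F$ applied to $u_\SPL(\alpha)$. Your extra remark about applying $\epsilon_\rat$ uniformly is a harmless elaboration of the same chase.
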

\begin{proof}
By Lemma \ref{lem.usplustop}, 
Ranicki's definition
(\ref{equ.umblissigmaustop}), and the homotopy commutativity of diagram
(\ref{equ.esigmaphifistauphiw}),
\begin{align*}
\tau \phi_W (u_\SPL (\alpha))
&= \epsilon_\rat \sigma^* \phi_F (u_\SPL (\alpha)) 
   = \epsilon_\rat \sigma^* (u_\STOP (\alpha_\STOP)) \\
&= u_\mbl (\alpha_\STOP). 
\end{align*}
\end{proof}
Lemma \ref{lem.umsotoumbl}, together with Lemma \ref{lem.tderingmorcapprods},
implies that the $F$-block bundle transfer on 
$\syml (\rat)$-homology is given by
\[ \xi^! = \rho_* (u_\mbl (\mu) \cap T(\xi)_* \sigma (-)): 
   \syml (\rat)_n (B) \longrightarrow \syml (\rat)_{n+d} (X). \]

\begin{example} \label{exple.trivialxitransfer}
We continue Example \ref{exple.trivialxiumkehronehomol} 
and compute the transfer for the trivial $F$-block bundle $\xi$
with total space $X=F \times B$.
Let $E$ be a commutative ring spectrum and 
$\phi: \MSPL \to E$ a morphism of ring spectra, equipping $E$ with the structure
of an $\MSPL$-module.
Recall that we had chosen a PL embedding $\theta_F: F \hookrightarrow \real^s$
with $s$ large enough so that $\theta_F$ has a tubular neighborhood given by
a PL microbundle $\mu_F$ which represents the stable normal PL microbundle of $F$.
The stable vertical normal bundle of $\xi$ is then given by
$\mu=\pro^*_1 \mu_F$.
Its Thom class $u_\SPL (\mu) \in \widetilde{\MSPL}^{s-d} (\Th (\mu))
  = \widetilde{\MSPL}^{s-d} (\Th (\mu_F) \Smash B^+)$
is $u_\SPL (\mu) = u_\SPL (\mu_F) \Smash 1$, 
since the bundle map $\mu \to \gamma^\SPL_{s-d}$  
factors as $\mu \to \mu_F \to \gamma^\SPL_{s-d},$  
where the first map covers the projection $\pro_1: F\times B\to F$
and the second map the classifying map for $\mu_F$.
The element $\phi (u_\SPL (\mu_F))$ is an $E$-orientation of $\mu_F$
(\cite[Prop. V.1.6]{rudyak}) and thus
$[F]_E := \rho_{F*} 
   (\phi (u_\SPL (\mu_F)) \cap [\Th \mu_F]_E) \in E_d (F)$,
with $\rho_{F*}: E_d (N_F) \cong E_d (F)$,   
is an $E$-homology orientation for the PL manifold $F$
(\cite[Prop. V.2.8]{rudyak}, \cite[p. 333, Lemma 14.40]{switzer}).  
The transfer $\xi^!: E_n (B) \to E_{n+d} (F\times B)$ is then given on 
$a\in E_n (B)$ by
\begin{equation} \label{equ.transfertrivialxiisproduct}
\xi^! (a) = [F]_E \times a,
\end{equation}
as follows from the calculation
\begin{align*}
\xi^! (a)
&= \Phi T(\xi)_* \sigma (a) \\
&= \Phi ([\Th \mu_F]_E \Smash a) \\
&= \rho_* (\phi(u_\SPL (\mu)) \cap ([\Th \mu_F]_E \Smash a)) \\
&= \rho_* (\phi(u_\SPL (\mu_F) \Smash 1) \cap ([\Th \mu_F]_E \Smash a)) \\
&= \rho_* ((\phi (u_\SPL (\mu_F)) \Smash 1) \cap ([\Th \mu_F]_E \Smash a)) \\
&= \rho_* ((\phi (u_\SPL (\mu_F)) \cap [\Th \mu_F]_E) 
                     \times (1 \cap a)) \\
&= \rho_{F*} (\phi (u_\SPL (\mu_F)) \cap [\Th \mu_F]_E)
         \times a \\
&= [F]_E \times a.
\end{align*}  
\end{example}

\section{Geometric Pullback Transfer on Bordism}
\label{sec.pullbacktransfer}

As in previous sections, 
$F$ is a closed $d$-dimensional oriented PL manifold and
$\xi$ is an oriented PL $F$-block bundle with total space $X$
over a finite ball complex $K,$ $B=|K|.$
We shall geometrically construct a pullback transfer
\[ \xi^!_\PB:  \Omega^\Witt_n (B) \longrightarrow
         \Omega^\Witt_{n+d} (X) \]
on Witt bordism.
Let $f: W \to B$ be a continuous map representing an element $[f]$ of
$\Omega^\Witt_n (B)$.
Choose a PL map $g: W\to B$ homotopic to $f$.
We follow Casson's method for pulling back $F$-block bundles, \cite{casson}.
(Note that the pullback of block and mock bundles
is not generally defined through cartesian diagrams.)
There is a compact polyhedron $V$ and a factorization
\[ \xymatrix{
W \ar@{^{(}->}[r]^j \ar[rd]_g & B \times V \ar[d]^{\pro_1} \\
& B
} \]
of $g$ into a PL embedding $j$ followed by a standard projection.
Let $L$ be a cell complex with $|L|=V$.
The $F$-block bundle pullback $\pro^*_1 \xi$ is by definition
$\xi \times L$,
an $F$-block bundle over the cell complex $K \times L$
with total space
$E(\pro^*_1 \xi) = X\times V.$
Thus the first factor projection 
$X \times V \to X$
defines a PL map
\[ \pro_1: E(\pro^*_1 \xi) \longrightarrow X. \]
Let $C$ be the product cell complex $C := K \times L$ and put 
$\eta := \pro^*_1 \xi.$
Let $C'$ be a subdivision of $C$ such that the subpolyhedron 
$j(W) \subset V\times B$ is given by
$j(W) = |D'|$ for a subcomplex $D'$ of $C'$.
Block bundles can be subdivided and this does not change the total space,
\cite[p. 37]{casson}.
Let $\eta'$ over $C'$ be a subdivision of $\eta$,
$E(\eta') = E(\eta)$.
Block bundles can be restricted to subcomplexes. The total space of the
restriction is given by the union of the blocks over the cells of the
subcomplex. Thus we can restrict $\eta'$ to the subcomplex $D'$ of $C'$
and obtain an $F$-block bundle
$\eta'|_{D'}$
whose total space is a PL subspace
$E(\eta'|_{D'}) \hookrightarrow E(\eta') = E(\eta).$
The composition
\[ E(\eta'|_{D'}) \hookrightarrow E(\eta) \longrightarrow X \] 
gives a map
\begin{equation} \label{equ.eetaprimedprimex}
\overline{g}: E(\eta'|_{D'}) \longrightarrow X. 
\end{equation}
Let $j^* \eta$ be the $F$-block bundle over $W$ that corresponds to
$\eta'|_{D'}$ under the PL homeomorphism $j: W \cong j(W)$.
The pullback $F$-block bundle $g^* \xi$ is then defined to be
\[ g^* \xi = j^* \eta = j^* (\pro^*_1 \xi). \]
The map (\ref{equ.eetaprimedprimex}) is thus a map
\[ \overline{g}: E(g^* \xi) \longrightarrow X. \]
Note that $E(g^* \xi)$ is a compact polyhedron.
In the above construction of pullbacks and $\overline{g}$,
it was not important that the Witt domain $W$ has empty boundary;
everything applies to compact $W$ with boundary as well.
Indeed, Casson's pullback applies of course to PL maps with general
polyhedral domain.
Let $\xi,\xi'$ be $F$-block bundles over cell complexes $K,K'$ such that
$|K|=B=|K'|$.
Recall that $\xi$ and $\xi'$ are called \emph{equivalent} if for some
common subdivision $K''$ of $K$ and $K'$, the
subdivision of $\xi$ over $K''$ is isomorphic to the
subdivision of $\xi'$ over $K''$.
(An \emph{isomorphism} of $F$-block bundles over the same complex is
a block preserving homeomorphism of total spaces.)
An equivalence $\phi: \xi \cong \xi'$ of $F$-block bundles over $B$
induces an equivalence
\[ g^* \phi: g^* \xi \cong g^* \xi'  \]
such that
\[ \xymatrix{
E(g^* \xi) \ar[d]_{g^* \phi}^\cong \ar[r]^{\overline{g}}
   & E(\xi) =X \ar[d]^\phi_\cong \\
E(g^* \xi') \ar[r]^{\overline{g}'} & E(\xi') =X'
} \]
commutes.

\begin{lemma} \label{lem.fiberprodiswitt}
Let $g: W \to B$ be a PL map defined on a compact Witt space with
possibly nonempty boundary $\partial W$.
Then the total space $E(g^* \xi)$ is a closed Witt space with boundary
$E((g^* \xi)|_{\partial W})$.
\end{lemma}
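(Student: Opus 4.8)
The plan is to identify $E(g^*\xi)$ with the total space of an oriented mock bundle over a ball complex triangulating $W$, and then to quote Lemma \ref{lem.mockbundleoverwittiswitt}. First I would unwind the pullback construction: by definition $g^*\xi = j^*\eta = j^*(\pro_1^*\xi)$, where $\eta=\pro_1^*\xi=\xi\times L$ is an $F$-block bundle over $C=K\times L$, $\eta'$ is a subdivision of $\eta$ over a subdivision $C'$ of $C$ with $j(W)=|D'|$ for a subcomplex $D'\subset C'$, and $j^*\eta$ is the $F$-block bundle over $W$ corresponding to $\eta'|_{D'}$ under the PL homeomorphism $j\colon W\cong|D'|$. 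Transporting $D'$ back along $j$ produces a ball complex structure $K_W$ on $W$ over which $g^*\xi$ is an $F$-block bundle; fixing a block fibration $p\colon E(g^*\xi)\to W$ (Casson \cite[Lemma 6]{casson}), so that $p^{-1}(\sigma)=(g^*\xi)(\sigma)$ for every cell $\sigma$, and using that $F$ is a closed oriented PL $d$-manifold so that each block $(g^*\xi)(\sigma)$ is a compact PL manifold of dimension $d+\dim\sigma$ with boundary $p^{-1}(\partial\sigma)$, one sees that $p$ exhibits $g^*\xi$ as a $d$-mock bundle over $K_W$.

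Next I would settle the orientations, which is the only delicate point. Since $W$ is a Witt space with boundary, it is an oriented PL pseudomanifold with boundary $\partial W$, so $K_W$ admits an orientation for which the sum of its oriented top-dimensional balls is a cycle rel boundary. The block bundle $\xi$ is oriented, hence so are $\xi\times L$, the subdivision $\eta'$, its restriction $\eta'|_{D'}$, and therefore $g^*\xi$; this means precisely that $\epsilon\big(p^{-1}(\tau),p^{-1}(\sigma)\big)=\epsilon(\tau,\sigma)$ for every codimension-one face $\tau<\sigma$ in $K_W$. Because reversing the orientation of a cell $\sigma$ together with that of the manifold $p^{-1}(\sigma)$ leaves all of these incidence equalities intact, I may adjust the base orientation cell by cell until it agrees with the one chosen above, while keeping $g^*\xi$ an oriented $d$-mock bundle over $K_W$.

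Finally, I would apply Lemma \ref{lem.mockbundleoverwittiswitt} to the finite ball complex pair $(K_W,\partial K_W)$, where $\partial K_W$ is the subcomplex with $|\partial K_W|=\partial W$, and to the oriented $d$-mock bundle $g^*\xi/K_W$ with projection $p$. The lemma yields that $E(g^*\xi)$ is a compact Witt space of dimension $\dim W+d$ with boundary $p^{-1}(\partial W)$. Since the restriction of a block bundle to a subcomplex has as its total space the union of the blocks over the cells of that subcomplex, $p^{-1}(\partial W)=\bigcup_{\sigma\in\partial K_W}(g^*\xi)(\sigma)=E\big((g^*\xi)|_{\partial W}\big)$, which is the asserted boundary; in particular $E(g^*\xi)$ is closed when $\partial W=\varnothing$. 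The main obstacle is thus purely the orientation bookkeeping of the second step; everything else is a direct translation of the pullback construction together with a citation of Lemma \ref{lem.mockbundleoverwittiswitt}.
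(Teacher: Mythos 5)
Your proposal is correct and follows essentially the same route as the paper, whose proof simply observes that an $F$-block bundle is in particular a mock bundle, so $g^*\xi$ is a mock bundle over the Witt space $W$ and Lemma \ref{lem.mockbundleoverwittiswitt} applies. Your extra care with the ball complex structure $K_W$, the orientation bookkeeping, and the identification of $p^{-1}(\partial W)$ with $E((g^*\xi)|_{\partial W})$ just makes explicit what the paper leaves implicit.
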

\begin{proof}
An $F$-block bundle is in particular a mock bundle.
Thus $g^* \xi$ is a mock bundle over the Witt space $W$
and the result follows from
Lemma \ref{lem.mockbundleoverwittiswitt}.
\end{proof}
By Lemma \ref{lem.fiberprodiswitt}, 
the map $\overline{g}: E(g^* \xi) \to X$ represents an element
$[\overline{g}] \in \Omega^\Witt_{n+d} (X)$.\\

For future reference and additional clarity in subsequent arguments,
let us record explicitly:
\begin{lemma} \label{lem.homeoyieldswittbord}
Let $W, W'$ be closed $n$-dimensional Witt spaces.
If $f\simeq f':W\to X$
are homotopic maps, then $[f] = [f'] \in \Omega^\Witt_n (X).$
If $\phi: W \stackrel{\cong}{\longrightarrow} W'$ is a PL homeomorphism,
and $f:W\to X,$ $f':W' \to X$ maps such that $f' \circ \phi = f$,
then
$[f] = [f'] \in \Omega^\Witt_n (X).$
\end{lemma}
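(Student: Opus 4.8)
The plan is to prove both assertions of Lemma \ref{lem.homeoyieldswittbord} directly from the definition of the Witt bordism relation, essentially observing that each is an instance of a bordism given by a product cylinder, together with Lemma \ref{lem.mockbundleoverwittiswitt} (or rather the trivial case of it) to verify that the relevant cylinders are Witt spaces with the correct boundary. Since $\Omega^\Witt_n(-)$ is a homology theory in the classical sense, the homotopy invariance statement is formally automatic; nevertheless I would spell it out geometrically so that the proof stays within the geometric-bordism framework used throughout the paper.

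For the first claim, suppose $H: W\times I \to X$ is a (PL, after a homotopy) homotopy from $f$ to $f'$. The space $W\times I$ is a compact Witt space with boundary $W\times\{0\}\sqcup W\times\{1\}$: indeed $W\times I$ is an oriented PL pseudomanifold, and the link of a point $(x,t)$ with $t\in(0,1)$ is the same as the link of $x$ in $W$, so the Witt vanishing condition $IH^{\bar m}_k(L;\rat)=0$ is inherited; at boundary points the collar structure is the obvious one coming from the collar $[0,1)$-factor. Hence $H$ is a Witt bordism from $(W,f)$ to $(W',f')=(W,f')$, and $[f]=[f']$ in $\Omega^\Witt_n(X)$. (One may also simply invoke that $\Omega^\Witt_*$ is a generalized homology theory and therefore homotopy invariant.)

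For the second claim, given a PL homeomorphism $\phi: W \xrightarrow{\cong} W'$ with $f'\circ\phi = f$, consider the mapping cylinder $\cyl(\phi)$ of $\phi$. Because $\phi$ is a PL \emph{homeomorphism}, $\cyl(\phi)$ is PL homeomorphic to $W\times I$, hence is a compact Witt space with boundary $W\sqcup W'$ exactly as in the previous paragraph. The map $W\times I\cong \cyl(\phi)\to X$ given by $H(w,t) = f(w) = f'(\phi(w))$ (constant in the $I$-direction under the identification $\cyl(\phi)\simeq W\times I$) is well defined and continuous, restricts to $f$ on $W$ and to $f'$ on $W'$, and therefore exhibits a Witt bordism between $(W,f)$ and $(W',f')$. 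Thus $[f]=[f']\in\Omega^\Witt_n(X)$.

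I do not expect any genuine obstacle here; the only point requiring a line of care is the verification that a product cylinder $W\times I$ over a Witt space $W$ is again a Witt space with the expected boundary, and this is precisely the content of the trivial ($q=0$, trivial mock bundle over $I$) case of Lemma \ref{lem.mockbundleoverwittiswitt}, or can be seen by the direct link computation sketched above. Everything else is a formal application of the definition of the Witt bordism equivalence relation.
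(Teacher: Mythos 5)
Your proof is correct and takes essentially the same route as the paper: the homotopy is realized as a Witt bordism via the cylinder $W\times I$, and the homeomorphism case is handled by a cylinder-type bordism (you use the mapping cylinder of $\phi$, while the paper glues a cylinder on $W$ to a cylinder on $W'$ along $\phi$ --- the same construction up to PL homeomorphism). The only point needing verification, that the cylinder on a closed Witt space is a compact Witt space with the expected boundary, is observed in both arguments.
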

\begin{proof}
The first statement, asserting homotopy invariance, is part of the
fact that Witt bordism constitutes a homology theory and is proven
by considering a homotopy as a Witt bordism, noting that the cylinder
on a closed Witt space is a Witt space with boundary.
The bordism required by the second statement is given by taking a 
cylinder on the domain of the PL homeomorphism
and a cylinder on the target of the PL homeomorphism, and then gluing the
two cylinders using the homeomorphism.
\end{proof}

\begin{lemma}
The class 
\[ [\overline{g}: E(g^* \xi) \to X] \in \Omega^\Witt_{n+d} (X) \]
depends only on the Witt class $[g] \in \Omega^\Witt_n (B)$.
\end{lemma}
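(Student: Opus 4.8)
The plan is to show that the assignment $[g]\mapsto [\overline{g}\colon E(g^*\xi)\to X]$ is well-defined by verifying that it respects the two moves that can change a representative of a Witt bordism class: replacing $g$ by a homotopic PL map, and cobounding two representatives by a Witt bordism over $B$. First I would address the dependence on the choice of PL approximation. Given a continuous $f\colon W\to B$, any two PL maps $g_0,g_1\colon W\to B$ homotopic to $f$ are PL homotopic to each other; let $G\colon W\times I\to B$ be such a PL homotopy. Applying Casson's pullback construction to $G$ (which, as emphasized in the text preceding Lemma \ref{lem.fiberprodiswitt}, works for arbitrary compact polyhedral domains, in particular $W\times I$) produces an $F$-block bundle $G^*\xi$ over $W\times I$ together with a map $\overline{G}\colon E(G^*\xi)\to X$. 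By Lemma \ref{lem.fiberprodiswitt}, $E(G^*\xi)$ is a compact Witt space whose boundary is $E((G^*\xi)|_{W\times\{0,1\}})$, which is PL homeomorphic over $X$ to $E(g_0^*\xi)\sqcup E(g_1^*\xi)$; hence $\overline{G}$ is a Witt bordism showing $[\overline{g}_0]=[\overline{g}_1]$. The one point to check here is that restricting $G^*\xi$ to $W\times\{i\}$ genuinely recovers $g_i^*\xi$ up to the equivalence of block bundles used in the construction; this follows from the naturality of Casson's pullback under the inclusions $W\times\{i\}\hookrightarrow W\times I$ together with the commuting square relating $\overline{g}$ to an equivalence $\phi\colon\xi\cong\xi'$ displayed just before Lemma \ref{lem.fiberprodiswitt}, and from Lemma \ref{lem.homeoyieldswittbord}, which absorbs the block-bundle equivalences into Witt bordism.

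Next I would handle bordism invariance proper. Suppose $[g_0]=[g_1]$ in $\Omega^\Witt_n(B)$, witnessed by a compact $(n+1)$-dimensional Witt space $Z$ with $\partial Z=W_0\sqcup W_1$ and a continuous map $h\colon Z\to B$ restricting to $g_0,g_1$ on the boundary. Choose a PL approximation $h'\colon Z\to B$ of $h$; by the collaring of $\partial Z$ and a relative PL approximation, we may arrange that $h'$ restricts on $\partial Z$ to PL maps PL-homotopic to $g_0,g_1$, so by the first part of the argument we lose nothing by assuming $h'|_{\partial Z}=g_0\sqcup g_1$ up to the moves already justified. Applying Casson's pullback to $h'$ yields an $F$-block bundle $(h')^*\xi$ over $Z$ and a map $\overline{h'}\colon E((h')^*\xi)\to X$; by Lemma \ref{lem.fiberprodiswitt}, $E((h')^*\xi)$ is a compact $(n+d+1)$-dimensional Witt space with boundary $E((h')^*\xi|_{\partial Z})=E(g_0^*\xi)\sqcup E(g_1^*\xi)$, and $\overline{h'}$ restricts to $\overline{g}_0,\overline{g}_1$ on these two pieces. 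Thus $\overline{h'}$ is precisely a Witt bordism in $X$ between $\overline{g}_0$ and $\overline{g}_1$, giving $[\overline{g}_0]=[\overline{g}_1]\in\Omega^\Witt_{n+d}(X)$.

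I expect the main obstacle to be bookkeeping rather than conceptual: Casson's pullback is not defined by a cartesian square, it involves a choice of factorization $g=\mathrm{pr}_1\circ j$ through some $B\times V$, a subdivision $C'$ of $K\times L$ adapted to $j(W)$, and a restriction to a subcomplex, and different choices produce block bundles that are only equivalent, not equal. The care needed is to check that all these choices can be made compatibly for the boundary restrictions of a pullback over $W\times I$ or over $Z$ — e.g. that a factorization and adapted subdivision chosen for $h'$ restricts to admissible data for $g_0,g_1$ — and then to invoke Lemma \ref{lem.homeoyieldswittbord} to pass from "equivalent total spaces over $X$" to "equal Witt bordism classes". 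Once the stratumwise collaring of $\partial Z$ is used to set up the relative PL approximation, and once one notes (as in Lemma \ref{lem.mockbundleoverwittiswitt}, via Lemma \ref{lem.fiberprodiswitt}) that all total spaces appearing are automatically Witt spaces of the correct dimension with the expected boundary, the two displays above assemble into the required bordisms and the proof concludes.
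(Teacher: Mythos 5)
Your proposal is correct and takes essentially the same route as the paper's proof: pull $\xi$ back over a PL Witt bordism of the representatives, use Lemma \ref{lem.fiberprodiswitt} to see that the resulting total space is a compact Witt space with the expected boundary, and use the compatibility of Casson's pullback with restriction to the boundary pieces together with Lemma \ref{lem.homeoyieldswittbord} to identify those pieces with $E(g_j^*\xi)$ over $X$. The only cosmetic difference is that you treat independence of the PL approximation as a separate first step, whereas the paper subsumes it in the single bordism argument (a homotopy cylinder on a closed Witt space being itself a Witt bordism).
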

\begin{proof}
Let $g_0: W_0 \to B$ and $g_1: W_1 \to B$ be PL maps such that
$[g_0] = [g_1] \in \Omega^\Witt_n (B)$.
Let $G:W \to B$ be a Witt bordism between $g_0$ and $g_1$;
we may assume $G$ to be PL.
Let $i_j: W_j \hookrightarrow W$ denote the boundary inclusions,
$j=0,1$.
Since $g_j = G \circ i_j$, there is an equivalence
$g^*_j \xi \cong i^*_j G^* \xi$ such that
\[ \xymatrix{
E(g^*_j \xi) \ar[d]_\cong \ar[rd]^{\overline{g}_j} & \\
E(i^*_j G^* \xi) \ar[r]_{\overline{G i_j}} & E(\xi) =X 
} \]
commutes.
Thus $\overline{g}_j$ and $\overline{G i_j}$ are Witt bordant,
$j=0,1$, by Lemma \ref{lem.homeoyieldswittbord}.
According to Lemma \ref{lem.fiberprodiswitt},
$E(G^* \xi)$ is a compact Witt space with boundary
$E(i^*_0 G^* \xi) \sqcup E(i^*_1 G^* \xi)$.
The diagram
\[ \xymatrix{
E(i^*_j G^* \xi) \ar@{^{(}->}[d] \ar[rd]^{\overline{G i_j}} & \\
E(G^* \xi) \ar[r]_{\overline{G}} & E(\xi) =X 
} \]
commutes, $j=0,1$.
Hence, $\overline{G}$ is a Witt bordism between
$\overline{G i_0}$ and $\overline{G i_1}$.
\end{proof}

We define the \emph{geometric transfer} (or \emph{pullback transfer})
\[ \xi^!_\PB:  \Omega^\Witt_n (B) \longrightarrow
         \Omega^\Witt_{n+d} (X) \]
by
\[ \xi^!_\PB [g:W\to B] = [\overline{g}: E(g^* \xi) \to X], \]
where $g$ is a PL representative of the bordism class.
Let
\[ \xi^!_{BRS}: \Omega^\Witt_n (B) \longrightarrow
         \Omega^\Witt_{n+d} (X)  \]
be the map
\[ \xi^!_{BRS} [g] := \rho_* (u_{BRS} (\nu) \cap T(\xi)_* \sigma [g]),  \]
where $\nu = \nu_\xi: X\to \BBSPL_{s-d}$ represents the stable
vertical normal PL disc block bundle of $\xi$.
This is a technical intermediary; in terms of their respective definitions,
the difference between $\xi^!_{BRS}$ and $\xi^!$ is that the former uses
the Thom class $u_{BRS} (\nu)$, while the latter uses the Thom class $u_\SPL (\mu)$.
We will eventually see that $\xi^!_\PB = \xi^!_{BRS} = \xi^!$
on Witt bordism. Towards that goal, let us first investigate the behavior 
of both the pullback transfer and the BRS-transfer 
under standard factor projections.

\begin{prop} \label{prop.transferandprojection}
Let $B$ and $D$ be compact polyhedra.
Let $\xi \times D$ denote the $F$-block bundle over $B\times D$ 
obtained by pulling
back $\xi$ under the projection $\pro_1: B\times D \to B$.
Then the diagrams
\begin{equation} \label{equ.pbtransferproj}
\xymatrix@C=60pt{
\Omega^\Witt_n (B) \ar[r]^{\xi^!_\PB}
  & \Omega^\Witt_{n+d} (X) \\
\Omega^\Witt_n (B \times D) \ar[u]^{\pro_{1*}} \ar[r]^{(\xi \times D)^!_\PB}
  & \Omega^\Witt_{n+d} (X \times D) \ar[u]_{\pro_{1*}}  
} \end{equation}
and
\begin{equation} \label{equ.brstransferproj}
\xymatrix@C=60pt{
\Omega^\Witt_n (B) \ar[r]^{\xi^!_{BRS}}
  & \Omega^\Witt_{n+d} (X) \\
\Omega^\Witt_n (B \times D) \ar[u]^{\pro_{1*}} \ar[r]^{(\xi \times D)^!_{BRS}}
  & \Omega^\Witt_{n+d} (X \times D) \ar[u]_{\pro_{1*}}  
} \end{equation}
commute.
\end{prop}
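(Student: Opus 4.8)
The plan is to treat the two squares by different means, since at this stage we do not yet know that $\xi^!_\PB$, $\xi^!_{BRS}$ and $\xi^!$ agree on Witt bordism. For the pullback square \eqref{equ.pbtransferproj} I would argue directly from Casson's pullback construction by fixing one coherent choice of auxiliary data. Represent a class in $\Omega^\Witt_n(B\times D)$ by a PL map $h\colon W\to B\times D$; then $g:=\pro_1\circ h\colon W\to B$ is PL and represents $\pro_{1*}[h]$. Choose a Casson factorization of $h$ as an embedding $j\colon W\hookrightarrow (B\times D)\times V'$ followed by the projection to $B\times D$, with $V'$ a compact polyhedron, and observe that, rewriting $(B\times D)\times V'=B\times(D\times V')$, the very same embedding $j$ is an admissible Casson factorization of $g$ through $B\times(D\times V')$. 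Since $\pro_1^*(\xi\times D)=\pro_1^*\xi$ as block bundles over $B\times(D\times V')$, the block bundles $h^*(\xi\times D)$ and $g^*\xi$ produced by these two factorizations are literally the same object — the restriction to the chosen subcomplex with polyhedron $j(W)$ of the product block bundle $\pro_1^*\xi$ — and, chasing the definition of the maps $\overline{h}$ and $\overline{g}$ (inclusion of the restricted total space followed by a factor projection), one finds $\pro_1\circ\overline{h}=\overline{g}\colon E(g^*\xi)\to X$, because the composite of factor projections $(X\times D)\times V'\to X\times D\to X$ is the single projection $X\times(D\times V')\to X$. By Lemma \ref{lem.fiberprodiswitt} all total spaces involved are closed Witt spaces, so $\pro_{1*}(\xi\times D)^!_\PB[h]=[\pro_1\circ\overline{h}]=[\overline{g}]=\xi^!_\PB\pro_{1*}[h]$ in $\Omega^\Witt_{n+d}(X)$ (invoking Lemma \ref{lem.homeoyieldswittbord} if one prefers to phrase the last identification via a PL homeomorphism over $X$). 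The only point needing a word is the legitimacy of the coherent choice, which is covered by the independence of $\xi^!_\PB$ of the factorization and subdivision established earlier in this section.

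For the BRS square \eqref{equ.brstransferproj} I would decompose $\xi^!_{BRS}=\rho_*\bigl(u_{BRS}(\nu_\xi)\cap-\bigr)\circ T(\xi)_*\circ\sigma$ and check that each of the three factors commutes with the relevant ``forget $D$'' map, i.e.\ with the collapse $D^+\to S^0$ on the appropriate smash factor. Three inputs are needed. First, from the product embedding $\theta\times\id_D\colon X\times D\hookrightarrow\real^s\times(B\times D)$ one identifies (exactly as in Example \ref{exple.trivialxiverticalbundle}) the vertical normal disc block bundle of $\xi\times D$ with the pullback of $\nu_\xi$ along $\pro_1\colon X\times D\to X$, whence the total space of this block bundle is $N\times D$, that of its sphere block bundle is $\partial N\times D$, and $\Th(\nu_{\xi\times D})\cong\Th(\nu_\xi)\wedge D^+$. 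Second, tracing the construction of Section \ref{sec.umkehrmap} with the product regular neighbourhood and a product collar, one obtains $T(\xi\times D)\simeq T(\xi)\wedge\id_{D^+}$, parallel to Example \ref{exple.trivialxiumkehrmap}. Third, by naturality of the BRS--Thom class under block bundle pullback — which follows from Lemma \ref{lem.usplgoestoubrs} together with the naturality of $u_\SPL$ — the class $u_{BRS}(\nu_{\xi\times D})$ is the image of $u_{BRS}(\nu_\xi)$ under $\pro_1^*\colon\Omega^{-m}_\SPL(N,\partial N)\to\Omega^{-m}_\SPL(N\times D,\partial N\times D)$, where $m=s-d$.

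Granting these, the three squares commute as follows: the suspension square commutes by naturality of $\sigma$ with respect to $\pro_1\colon B\times D\to B$; the Umkehr square commutes essentially on the nose, since $T(\xi)\wedge\id_{D^+}$ followed by collapsing $D^+$ equals $T(\xi)$ followed by the same collapse; and the Thom-homomorphism square commutes because $\pro_1$ intertwines the zero-section isomorphisms $\rho_*$ and, by the projection formula for the $\MSPL$-module cap product on $\MWITT$ (a standard fact in the spirit of Lemmas \ref{lem.tderingmorcapprods} and \ref{lem.caponemodmor}), $\pro_{1*}\bigl(\pro_1^*u_{BRS}(\nu_\xi)\cap b\bigr)=u_{BRS}(\nu_\xi)\cap\pro_{1*}b$. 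Composing the three squares yields \eqref{equ.brstransferproj}. I expect the main obstacle to lie in the second paragraph: making precise that the Umkehr map of the stabilised product $\xi\times D$ is, up to pointed homotopy, $T(\xi)\wedge\id_{D^+}$ — which forces one to build the embedding, the regular neighbourhood and the collar for $\xi\times D$ as products of those for $\xi$, and to check that the collapse recipe of Section \ref{sec.umkehrmap} respects this product structure — together with the naturality of $u_{BRS}$ under pullback. Both are routine but require genuine verification, whereas the pullback-transfer square reduces to a single coherent choice in Casson's construction.
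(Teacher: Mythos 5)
Your proposal is correct and follows essentially the same route as the paper: the pullback square is handled by fixing one Casson factorization that serves simultaneously for $h$ and $\pro_1\circ h$ so that the two pullback block bundles coincide and the factor projections intertwine $\overline{h}$ and $\overline{g}$, and the BRS square is handled by decomposing $\xi^!_{BRS}$ into suspension, Umkehr map and geometric Thom homomorphism, using the product embedding $\theta\times\id_D$ to identify $\nu_{\xi\times D}=\nu_\xi\times D$, the compatibility of the Umkehr maps with the projections, naturality of $u_{BRS}$ under pullback, and the projection formula. The only cosmetic deviations are your smash-product phrasing of $\Th(\pro_1)$ and deriving the naturality of $u_{BRS}$ from Lemma \ref{lem.usplgoestoubrs} rather than quoting it directly from Buoncristiano--Rourke--Sanderson, as the paper does.
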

\begin{proof}
We will first establish the commutativity of
diagram (\ref{equ.pbtransferproj}) involving the pullback
transfers.
Recall that the $F$-block bundle $\xi$ is given over a
cell complex $K$ with $|K|=B$.
Let $J$ be a cell complex with polyhedron $|J|=D$.
Then $\xi \times D$ is an $F$-block bundle over the
cell complex $K\times J$.
Let $[g] \in \Omega^\Witt_n (B \times D)$ be any element,
represented by a PL map $g: W \to B \times D$.
Choose a compact polyhedron $V$ and a factorization of $g$ as
\begin{equation} \label{equ.factorizwtobd}
\xymatrix@C=50pt{
W \ar@{^{(}->}[r]^j \ar[rd]_g 
   & (B \times D) \times V \ar[d]^{\pro_{B\times D}} \\
& B \times D
} \end{equation}
Let $L$ be a cell complex with $|L|=V$.
We will compute $\xi^!_\PB \pro_{1*} [g]$.
The element $\pro_{1*} [g]$ is represented by $\pro_1 \circ g$
with factorization
\[ \xymatrix@C=50pt{
W \ar@{^{(}->}[r]^j \ar[rd]_{\pro_1 \circ g} 
   & B \times D \times V \ar[d]^{\pro_B} \\
& B.
} \]
The pullback
$\pro^*_B \xi = \xi \times J \times L$
has total space $E(\pro^*_B \xi) = X \times D \times V$
which projects to $X$ via
\[ \pro_X: E(\pro^*_B \xi) = X \times D \times V
  \longrightarrow X = E(\xi). \]
Let $C$ be the cell complex $C=K \times J \times L$ and
let $C'$ be a subdivision of $C$ such that
$j(W)$ is given by $j(W) = |D'|$
for some subcomplex $D'$ of $C'$.
Let $(\pro^*_B \xi)'$ be the block bundle over $C'$ obtained by
subdivision of $\pro^*_B \xi$.
Then $(\pro^*_B \xi)'$ can be restricted to $D'$, and the total
space of this restriction $(\pro^*_B \xi)'|_{D'}$ is a subspace
of $E((\pro^*_B \xi)')=E(\pro^*_B \xi)$.
The composition of the subspace inclusion with $\pro_X$ yields
a map  
\[ 
\overline{\pro_1 \circ g}:
  E((\pro^*_B \xi)'|_{D'}) \subset
  E(\pro^*_B \xi) = X \times D \times V
  \stackrel{\pro_X}{\longrightarrow} X
\]
such that
\[ \xi^!_\PB [\pro_1 \circ g]
    = [\overline{\pro_1 \circ g}].  \]

Let us compute $(\xi \times D)^!_\PB [g]$.
The relevant factorization is (\ref{equ.factorizwtobd});
the pullback
\[ \pro^*_{B \times D} (\xi \times D) 
   = (\xi \times D) \times L = \xi \times J \times L
     = \pro^*_B \xi \]
has total space 
$E(\pro^*_{B \times D} (\xi \times D)) = X \times D \times V$
which projects to $X \times D$ via
\[ \pro_{X \times D}: E(\pro^*_{B \times D} (\xi \times D)) 
    = (X \times D) \times V
  \longrightarrow X \times D = E(\xi \times D). \]
Let $(\pro^*_{B \times D} (\xi \times D))'$ 
be the block bundle over $C'$ obtained by
subdivision of $\pro^*_{B \times D} (\xi \times D)$.
Then
$(\pro^*_{B \times D} (\xi \times D))' = (\pro^*_B \xi)'$
and thus
\[ (\pro^*_{B \times D} (\xi \times D))'|_{D'} =
      (\pro^*_B \xi)'|_{D'}.  \]
Consider the commutative diagram
\[ \xymatrix@C=60pt{
E((\pro^*_{B \times D} (\xi \times D))'|_{D'}) \ar@{^{(}->}[r] \ar@{=}[d] &
  E(\pro^*_{B \times D} (\xi \times D)) \ar[r]^{\pro_{X \times D}} \ar@{=}[d] &
    X \times D \ar[d]^{\pro_1} \\
E((\pro^*_B \xi)'|_{D'}) \ar@{^{(}->}[r] &
  E(\pro^*_B \xi) \ar[r]^{\pro_X} &
    X.
} \]
The upper horizontal composition is a map $\overline{g}$ such that
\[ (\xi \times D)^!_\PB [g] =
       [\overline{g}],  \]
and the lower horizontal composition is
$\overline{\pro_1 \circ g}$.       
Therefore,
\begin{align*}
\pro_{1*} (\xi \times D)^!_\PB [g]
&= [\pro_1 \circ \overline{g}] = [\overline{\pro_1 \circ g}] \\
&= \xi^!_\PB [\pro_1 \circ g] = \xi^!_\PB \pro_{1*} [g].
\end{align*}
Thus, (\ref{equ.pbtransferproj}) commutes as claimed.

It remains to establish the commutativitiy of
diagram (\ref{equ.brstransferproj}).
Let $\nu_\xi = \nu_\theta$ denote the stable vertical normal 
PL disc block bundle
associated to a particular choice of blockwise embedding
$\theta: X=E(\xi) \hookrightarrow \real^s \times B$
by Proposition \ref{thm.existencenormaldiscblockbundle}.
The PL embedding
\[ E(\xi \times D) = X \times D
   \stackrel{\theta \times \id_D}{\hookrightarrow}
     (\real^s \times B) \times D = \real^s \times (B \times D) \]
is block preserving over $B\times D$
with respect to the $F$-blocks
$(\xi \times D)(\sigma \times \tau) = \xi (\sigma) \times \tau$
of $\xi \times D$,
$\sigma \in K,$ $\tau \in J$, as
\[  
(\theta \times \id_D)(\xi (\sigma) \times \tau)
= \theta (\xi (\sigma)) \times \tau
\subset (\real^s \times \sigma) \times \tau
= \real^s \times (\sigma \times \tau).
\]
Thus the stable vertical normal disc-block bundle
of $\xi \times D$
can be computed from the embedding $\theta \times \id_D$,
which yields
\[ \nu_{\xi \times D} = \nu_{\theta \times \id_D} =
   \nu_\theta \times D = \nu_\xi \times D, \]
a disc block bundle over $X \times D$.   
Recall that the Thom space of the block bundle $\nu_\xi$ is
\[ \Th (\nu_\xi)
 = N \cup_{\partial N} \cone (\partial N) \]
with $N=E(\nu_\xi)$.
Thus, with 
$N' := E(\nu_{\xi \times D}) = N \times D,$
we have
\[ \Th (\nu_{\xi \times D})
 = N' \cup_{\partial N'} \cone (\partial N').   \]
Here, $\partial N'$ denotes the total space of the sphere bundle
of $\xi \times D,$
$\partial N' = (\partial N)\times D.$
The projection
$\pro_1: N' = N \times D \to N$
induces a map
\[ \Th(\pro_1): (N \times D) \cup_{(\partial N) \times D}
                \cone ((\partial N) \times D)
    \longrightarrow 
    N \cup_{\partial N} \cone (\partial N),
\]
i.e. a map
\[ \Th (\pro_1):
  \Th (\nu_{\xi \times D}) \longrightarrow
  \Th (\nu_\xi). \]
The suspension of the projection $\pro_1: B \times D \to B$ 
is a map
$S^s \pro_1: S^s (B\times D)^+ \to S^s B^+.$
The $F$-block bundle $\xi \times D$ has its Umkehr map
\[ T(\xi \times D):
      S^s (B\times D)^+ \longrightarrow
        \Th (\nu_{\xi \times D}) \]
such that the diagram        
\[ \xymatrix@C=60pt{
S^s B^+ \ar[r]^{T(\xi)} &
  \Th (\nu_\xi) \\
S^s (B\times D)^+ \ar[u]^{S^s \pro_1} \ar[r]^{T(\xi \times D)} &
  \Th (\nu_{\xi \times D}) \ar[u]_{\Th (\pro_1)}
} \]
commutes up to homotopy. The induced diagram
\[ \xymatrix@C=60pt{
\widetilde{\Omega}^\Witt_{n+s} (S^s B^+) \ar[r]^{T(\xi)_*} &
  \widetilde{\Omega}^\Witt_{n+s} (\Th \nu_\xi) \\
\widetilde{\Omega}^\Witt_{n+s} (S^s (B\times D)^+) 
   \ar[u]^{(S^s \pro_1)_*} \ar[r]^{T(\xi \times D)_*} &
  \widetilde{\Omega}^\Witt_{n+s} (\Th \nu_{\xi \times D}) 
    \ar[u]_{\Th (\pro_1)_*}
} \]
on reduced Witt bordism commutes.
The diagram
\[ \xymatrix@C=60pt{
\Omega^\Witt_n (B) \ar[r]^{\sigma}_\cong &
  \widetilde{\Omega}^\Witt_{n+s} (S^s B^+) \\
\Omega^\Witt_n (B\times D) 
   \ar[u]^{\pro_{1*}} \ar[r]^{\sigma}_\cong &
  \widetilde{\Omega}^\Witt_{n+s} (S^s (B\times D)^+) \ar[u]_{(S^s \pro_1)_*}
} \]
commutes by the naturality of the suspension isomorphism $\sigma$.

It remains to show that
\begin{equation} \label{equ.capuandrhoandproj} 
\xymatrix@C=60pt{
\widetilde{\Omega}^\Witt_{n+s} (\Th \nu_\xi) 
 \ar[r]^{u_{BRS} (\nu_\xi) \cap -} &
  \Omega^\Witt_{n+d} (E\nu_\xi)
   \ar[r]^{\rho_*}_\cong &
    \Omega^\Witt_{n+d} (X) \\
\widetilde{\Omega}^\Witt_{n+s} (\Th \nu_{\xi \times D})
 \ar[u]^{\Th (\pro_1)_*} 
 \ar[r]^{u_{BRS} (\nu_{\xi \times D}) \cap -} &
  \Omega^\Witt_{n+d} (E\nu_{\xi \times D})
   \ar[u]^{\pro_{1*}}
   \ar[r]^{\rho_*}_\cong &
    \Omega^\Witt_{n+d} (X \times D) 
     \ar[u]_{\pro_{1*}}
} \end{equation}
commutes.
The right hand side commutes, since
the zero section embedding
$X \times D \hookrightarrow E \nu_{\xi \times D} = N \times D$
of $\nu_{\xi \times D}$ is given by
$i \times \id_D$, where $i$ is the zero section embedding
$i: X \hookrightarrow E\nu_\xi =N$ of $\nu_\xi$, 
so that
\[ \xymatrix@C=60pt{
N = E\nu_\xi & X \ar[l]_i \\
N\times D = E\nu_{\xi \times D} \ar[u]_{\pro_1} &
  X \times D \ar[l]^{i \times \id_D} \ar[u]^{\pro_1}
} \]
commutes.
We will prove that the left hand side commutes as well.
The map
\[ \Th (\pro_1):
  \Th (\nu_{\xi \times D}) \longrightarrow
  \Th (\nu_\xi) \]
induces a homomorphism
\[ \Th (\pro_1)^*:
 \Omega^{d-s}_\SPL (E\nu_\xi, \dot{E} \nu_\xi)
 \longrightarrow
 \Omega^{d-s}_\SPL (E\nu_{\xi \times D}, \dot{E} \nu_{\xi \times D}), \]
which agrees with the homomorphism
\[ \pro^*_1: 
 \Omega^{d-s}_\SPL (N, \partial N)
 \longrightarrow
 \Omega^{d-s}_\SPL ((N, \partial N) \times D) \]
induced by the map of pairs
$\pro_1: (N, \partial N) \times D \to (N, \partial N).$
By the naturality of the BRS-Thom class
(\cite[top of p. 27]{buonrs}),
this homomorphism maps the BRS-Thom class
of $\nu_\xi$ to the BRS-Thom class of 
$\pro^*_1 \nu_\xi = \nu_{\xi \times D}$,
\[ \pro^*_1 (u_{BRS} (\nu_\xi)) = u_{BRS} (\nu_{\xi \times D}).  \] 
Given any element 
\[ [g] \in  \widetilde{\Omega}^\Witt_{n+s} (\Th \nu_{\xi \times D})
   = \Omega^\Witt_{n+s} ((N,\partial N)\times D),
\]
the computation 
\begin{align*}
u_{BRS} (\nu_\xi) \cap \pro_{1*} [g]
&= \pro_{1*} (\pro^*_1 u_{BRS} (\nu_\xi) \cap [g]) \\
&= \pro_{1*} (u_{BRS} (\nu_{\xi \times D}) \cap [g])
\end{align*}
shows that the left hand side of diagram 
(\ref{equ.capuandrhoandproj}) commutes.
\end{proof}

The pullback transfer $\xi^!_\PB$ on Witt bordism
agrees with the transfer $\xi^!_{BRS}$:
\begin{prop} \label{prop.geopullbtransfistpcapu}
The diagram
\[ \xymatrix@C=50pt{
\widetilde{\Omega}^\Witt_{n+s} (S^s B^+)
 \ar[r]^{T(\xi)_*} &
 \widetilde{\Omega}^\Witt_{n+s} (\Th (\nu))
 \ar[r]^{u_{BRS} (\nu) \cap -} &
 \Omega^\Witt_{n+d} (E(\nu)) 
  \ar[d]^{\rho_*}_\cong \\
\Omega^\Witt_n (B) \ar[u]^\sigma_\cong
  \ar[rr]^{\xi^!_\PB} & &
   \Omega^\Witt_{n+d} (X)
} \]
commutes, that is, $\xi^!_\PB = \xi^!_{BRS}$.
\end{prop}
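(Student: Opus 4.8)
I would first reduce to the case where $g$ is a PL embedding. Given $[g]\in\Omega^\Witt_n(B)$, choose a PL representative and factor $g=\pro_1\circ j$ through a PL embedding $j\colon W\hookrightarrow B\times V$ into a product with a compact polyhedron, exactly as in Casson's pullback construction; then $\pro_{1*}[j]=[g]$. Proposition~\ref{prop.transferandprojection}, applied to both the pullback and the BRS transfer and to the block bundle $\xi\times V$ over $B\times V$, gives $\xi^!_\PB[g]=\pro_{1*}(\xi\times V)^!_\PB[j]$ and $\xi^!_{BRS}[g]=\pro_{1*}(\xi\times V)^!_{BRS}[j]$. Hence it suffices to prove $\zeta^!_\PB=\zeta^!_{BRS}$ on the class of a PL embedding of a closed Witt space, for an arbitrary oriented PL $F$-block bundle $\zeta$ over an arbitrary compact polyhedral base; the instance $(\zeta,j)=(\xi\times V,j)$ then yields the general statement.

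So let $j\colon W\hookrightarrow B=|K|$ be a PL embedding of a closed $n$-dimensional Witt space. Subdividing $K$, I may assume $W=|L|$ for a subcomplex $L\subseteq K$. Then $j^*\xi=\xi|_L$, its total space is the PL subspace $E(\xi|_L)=\bigcup_{\sigma\in L}\xi(\sigma)\subseteq X$, and the associated map $E(j^*\xi)\to X$ is the inclusion, so $\xi^!_\PB[j]=[E(\xi|_L)\hookrightarrow X]$ straight from the definition. For $\xi^!_{BRS}[j]$ I would choose the blockwise embedding $\theta$ with $\theta(X)\subseteq\interi(D^s)\times B$, so $N=E(\nu)\subseteq\interi(D^s)\times B$. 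The suspension isomorphism sends $[j]$ to the class of $\id_{D^s}\times j$, regarded as a map of pairs $(D^s\times W,S^{s-1}\times W)\to(D^s\times B,S^{s-1}\times B)$; this is the standard description of suspension in a bordism theory as crossing a cycle with the $s$-disc, which one checks by capping with the BRS-Thom class of $\real^s\times B$ (the zero-section mock bundle $\{0\}\times B\hookrightarrow D^s\times B$) and retracting to $B$. Since $\theta(X)$ lies in the interior, $T(\xi)$ is the identity on $N$ and collapses everything outside a collar of $\partial N$, in particular $S^{s-1}\times B$, to the cone point. A routine transversality-and-collar argument then identifies $T(\xi)_*\sigma[j]$ with the relative class $[(N_L,\partial N_L)\hookrightarrow(N,\partial N)]$ in $\widetilde\Omega^\Witt_{n+s}(\Th(\nu))\cong\Omega^\Witt_{n+s}(N,\partial N)$, where $N_L:=N\cap(D^s\times W)$. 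By the blockwise structure of $\nu$, this $N_L$ is the total space of the restriction of the disc block bundle $\nu$ over $\theta(E(\xi|_L))$, with $\partial N_L=\partial N\cap(D^s\times W)$; in particular $N_L$ is a compact $(n+s)$-dimensional Witt space with boundary mapping into $\partial N$.

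It then remains to evaluate $\rho_*\bigl(u_{BRS}(\nu)\cap[(N_L,\partial N_L)\hookrightarrow(N,\partial N)]\bigr)$. Here $u_{BRS}(\nu)$ is represented by the zero-section mock bundle $\eta$ over the ball complex $N$, with $E(\eta)=\theta(X)$, projection the zero section $i$, and $\eta$ empty over the part of the complex triangulating $\partial N$; so the geometric BRS cap-product formula for Witt bordism (Section~\ref{sec.bundletransferstable}) applies to the inclusion $f\colon(N_L,\partial N_L)\hookrightarrow(N,\partial N)$ and gives $u_{BRS}(\nu)\cap T(\xi)_*\sigma[j]=[E(f^*\eta)\to N]$, with $E(f^*\eta)=N_L\cap\theta(X)=\theta(E(\xi|_L))$ (using that $\theta$ is block preserving, so $\theta(X)\cap(D^s\times\sigma)=\theta(\xi(\sigma))$) and with the map to $N$ the diagonal, which factors as $\theta(E(\xi|_L))\hookrightarrow\theta(X)\xrightarrow{\,i\,}N$. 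Identifying $\theta(X)$ with $X$, this equals $i_*[E(\xi|_L)\hookrightarrow X]$, so applying the inverse $\rho_*$ of $i_*$ gives $\xi^!_{BRS}[j]=[E(\xi|_L)\hookrightarrow X]=\xi^!_\PB[j]$, which finishes the embedding case and hence the proposition. The step I expect to be the main obstacle is the identification of $T(\xi)_*\sigma[j]$ with $[(N_L,\partial N_L)\hookrightarrow(N,\partial N)]$: one must verify that the PL collapse $T(\xi)$ meets the zero section transversely in the relevant mock-bundle sense and that the collared complement of $N_L$ in $D^s\times W$ contributes nothing to the bordism class. This is routine but fiddly PL Pontryagin--Thom bookkeeping, and the point of the initial reduction to embeddings is precisely that it makes the required transversality automatic.
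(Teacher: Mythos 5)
Your overall strategy is the paper's: reduce to the case of a PL embedding via Proposition \ref{prop.transferandprojection}, then evaluate $u_{BRS}(\nu)\cap T(\xi)_*\sigma[j]$ geometrically and compare with the pullback transfer. But there is a genuine gap at the decisive step, the evaluation of the cap product. You write that the geometric BRS cap-product formula "applies to the inclusion $(N_L,\partial N_L)\hookrightarrow(N,\partial N)$ and gives $E(f^*\eta)=N_L\cap\theta(X)=\theta(E(\xi|_L))$", i.e.\ you treat the mock-bundle pullback of the zero-section mock bundle $\eta$ as the naive set-theoretic intersection. That is exactly what cannot be assumed: $\eta$ is a mock bundle over the \emph{ball} complex whose balls are the blocks of $\nu$ (plus sphere-bundle balls), whereas the geometric cap-product formula recalled in Section \ref{sec.bundletransferstable} requires the mock bundle to live over a \emph{simplicial} complex and the representing map of the homology class to be simplicial, so that the pullback is defined by a cartesian square. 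To meet these hypotheses one must invoke the BRS subdivision theorem for (embedded) mock bundles, and this replaces the zero section $i$ by an isotopic embedding $i'$ of a subdivided bundle $\eta'$; the pullback total space is then $(D^s\times h(W))\cap i'E(\eta')$, not $\theta(E(\xi|_L))$ on the nose. Recovering the pullback-transfer representative from this requires arranging the covering \emph{ambient} isotopy $H$ to preserve the pieces $E(\nu)\cap(D^s\times\sigma)$ for every cell $\sigma\in K$ (a careful double induction over the skeleta of the block structure and the cells of $K$), and then invoking invariance of Witt bordism classes under PL homeomorphism and isotopy. This block-preserving isotopy argument is the bulk of the paper's proof, and your proposal compresses it into the unproved equality above; moreover you locate the "main obstacle" elsewhere, at the identification of $T(\xi)_*\sigma[j]$ with the relative class of $(N_L,\partial N_L)$.

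That intermediate identification is in fact an extra step the paper does not need: the paper caps directly with the class of $f=T(\xi)\circ(\id\times h)$, observing that $\id\times h$ is simplicial after subdivision and that $T(\xi)^*(\eta'/_{T'})=\eta'/_{L'}$, so no replacement of the cycle $D^s\times W$ by $N_L$ is required. If you keep your version, you also owe an argument that $N_L$ is a compact Witt space with boundary mapping to $\partial N$ (it is a union of blocks of $\nu$ over the Witt space $E(\xi|_L)$, but a disc block bundle is not literally a mock bundle, so Lemma \ref{lem.mockbundleoverwittiswitt} does not apply verbatim), and a collaring argument for discarding the exterior of $N_L$ in $D^s\times W$. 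These are repairable, but the subdivision/isotopy issue in the cap-product evaluation is the real content of the proposition and is missing from your proof.
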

\begin{proof}
Let $h: W^n \to B$ be a continuous map from a closed $n$-dimensional
Witt space $W$ to $B$, representing an element
$[h]\in \Omega^\Witt_n (B)$. By simplicial approximation, we may
assume that $h$ is PL.
We begin by observing that by Proposition
\ref{prop.transferandprojection}, it suffices to prove
the statement for the case where $h: W\to B$ is a PL embedding:
For given any PL map $h: W\to B$,
consider the graph embedding
\[ (h,\id_W): W \longrightarrow B \times W \]
which factors $h$ as
\[ \xymatrix@C=50pt{
W \ar@{^{(}->}[r]^{(h,\id_W)} \ar[rd]_h & B \times W
   \ar[d]^{\pro_1} \\
& B.   
} \]
Let $\xi \times W$ denote the $F$-block bundle over $B\times W$ 
obtained by pulling
back $\xi$ under the projection $\pro_1: B\times W \to B$.
If the statement is known for embeddings, then
\[  (\xi \times W)^!_\PB [(h,\id_W)] 
         = (\xi \times W)^!_{BRS} [(h,\id_W)]. \]
Hence by Proposition \ref{prop.transferandprojection} with $D=W$,
\begin{align*}
\xi^!_\PB [h]
&= \xi^!_\PB [\pro_1 \circ (h,\id_W)] 
  = \xi^!_\PB \pro_{1*} [(h,\id_W)] \\
&= \pro_{1*} (\xi \times W)^!_\PB [(h,\id_W)] 
  = \pro_{1*} (\xi \times W)^!_{BRS} [(h,\id_W)] \\
&= \xi^!_{BRS} \pro_{1*} [(h,\id_W)] 
  = \xi^!_{BRS} [h].
\end{align*}
Consequently, it remains to prove the equality 
$\xi^!_\PB = \xi^!_{BRS}$ on Witt bordism classes that are
represented by PL embeddings.

As in the construction of the Umkehr map $T(\xi)$ in 
Section \ref{sec.umkehrmap}, let $N$ denote the total space $E(\nu)$ of the
stable vertical normal closed disc block bundle $\nu = \nu_\theta$
of $\xi$ associated to a choice of block preserving embedding 
$\theta: X\hookrightarrow \real^s \times B$, where $X$ is the total
space of the given $F$-block bundle $\xi$.
Thus $N$ is a $\xi$-block preserving
regular neighborhood of $\theta (X)$ in $\real^s \times B$.
Recall that $\partial N$ denotes the total space of the sphere
block bundle of $\nu$.
Let $D^s \subset \real^s$ be a closed PL ball about the origin which is
large enough so that
$(D^s - \partial D^s) \times B$ contains 
$N \cup V \subset \real^s \times B$, where $V$ is the
outside collar to $\partial N$ used in the construction of the
Umkehr map; such a ball exists by compactness of $X$.

Let $h: W \hookrightarrow B$ be a PL embedding of a closed Witt space into $B$.
Recall that $K$ is a cell complex with polyhedron $|K|=B$ and $\xi$
is given over $K$.
By subdivision of $K$ and $\xi$, we may assume that
$h(W)=|K_W|$ for a subcomplex $K_W$ of $K$.
Let $L_S$ be a finite simplicial complex such that
\begin{enumerate}
\item[(1)] $|L_S| = S^s B^+$,
\item[(2)] there is a subcomplex $L$ of $L_S$ such that $|L| = D^s \times B$,
\item[(3)] for every simplex $\sigma \in K$, there is a subcomplex
  $L_\sigma$ of $L$ such that
  \[ |L_\sigma| = D^s \times \sigma, \]
\item[(4)] there exists a subcomplex $L_\theta$ of $L$ 
           such that $|L_\theta| = \theta (X)$,
\item[(5)] the stable vertical normal bundle $\nu$ is a (disc-) block bundle over
  the complex $L_\theta$ such that
  \[ E(\nu) \cap (D^s \times \sigma) =
             \bigcup_{\tau \in L_\sigma \cap L_\theta} \nu (\tau), \]
  where $\nu (\tau)$ is the disc-block of $\nu$ over the simplex $\tau$.             
\end{enumerate}
Property (3) implies that $L_\tau$ is a subcomplex
of $L_\sigma$ for every face $\tau$ of $\sigma \in K$.
Furthermore,
\[
D^s \times h(W) = D^s \times |K_W| = \bigcup_{\sigma \in K_W} D^s \times \sigma
  = \bigcup_{\sigma \in K_W} |L_\sigma| 
  = | \bigcup_{\sigma \in K_W} L_\sigma |
\]
so that
\[ L_W := \bigcup_{\sigma \in K_W} L_\sigma \]
is a simplicial subcomplex of $L$ with
$D^s \times h(W) = |L_W|.$
Since the embedding $\theta: X \hookrightarrow \real^s \times B$ is
block preserving with respect to the $F$-blocks of $\xi$, we have
$\theta (\xi (\sigma)) = (D^s \times \sigma) \cap \theta (X)$
for all $\sigma \in K$.
Hence by property (4),
\[
\theta (\xi (\sigma)) =
 |L_\sigma| \cap |L_\theta| = |L_\sigma \cap L_\theta|.
\]
Thus the embedded $F$-blocks $\theta (\xi (\sigma))$ are triangulated by the
subcomplex $L_\sigma \cap L_\theta$ of $L$.

The image $\sigma [h]$ under the suspension isomorphism
\[ \sigma: \Omega^\Witt_n (B) \stackrel{\cong}{\longrightarrow} 
   \widetilde{\Omega}^\Witt_{n+s} (S^s B^+)
   = \Omega^\Witt_{n+s} ((D^s, \partial D^s) \times B) \]
is represented by the closed product PL embedding
\[ \id \times h: (D^s \times W, \partial (D^s \times W)) \hookrightarrow
       (D^s \times B, (\partial D^s) \times B). \]
The Umkehr map is a PL map
\[ T(\xi): S^s B^+ = \Th (\real^s \times B) 
   = \frac{D^s \times B}{(\partial D^s) \times B} 
       \longrightarrow \Th (\nu) \]
which is the identity near $\theta (X)$.
Composing with it, we obtain a PL map
\[ f = T(\xi) \circ (\id \times h):
  (D^s \times W, \partial (D^s \times W)) \longrightarrow
  (\Th (\nu), \infty). \]
Let $A$ be the ball complex with
$|A| = N= E(\nu)$ whose balls include the blocks of $\nu$.
The rest of the balls come from the sphere block bundle of $\nu$.
The BRS Thom class
$u_{BRS} (\nu) \in \Omega^{-(s-d)}_\SPL (N, \partial N)$
is represented by the mock bundle $\eta$ with projection given by the
zero section 
$i: \theta (X) \to A$. 
Thus the total space of $\eta$ is $E(\eta)=\theta (X)$.
The mock bundle $\eta$ is an embedded mock bundle in the sense
of Buoncristiano-Rourke-Sanderson \cite[p. 34]{buonrs}:
The restriction $i|: \eta (\sigma) \to \sigma$ for a ball 
$\sigma = \nu (\tau) \in A$ is the inclusion $\tau \hookrightarrow \nu (\tau)$,
which is locally flat by definition of a disc block bundle.
Furthermore, $i|: \eta (\sigma) \to \sigma$ is proper,
i.e. 
$i|^{-1} (\partial \sigma) = \partial \eta (\sigma).$
We wish to compute the cap product
\[ u_{BRS} (\nu) \cap [f: (D^s,\partial D^s) \times W 
   \to (\Th (\nu),\infty)] \in 
    \Omega^\Witt_{n+d} (E(\nu)).  \]
The base complex of $\eta$ is only known to be a ball complex, 
not a simplicial complex
as required for pulling back a mock bundle via a cartesian square.
Thus we need to subdivide simplicially.
Let $L'$ be a simplicial subdivision of $L$ and 
let $A'$ be a simplicial subdivision of $A$ such that
$A'$ is a subcomplex of $L'$. Thus,
\[ |A'| = E(\nu),~ |L'| = D^s \times B. \]
The complex $L'$ contains a (simplicial) subcomplex $L'_W$ given by
\[ L'_W = \{ \tau \in L' ~|~ \tau \subset \sigma
    \text{ for some } \sigma \in L_W \}.  \]
This is a subdivision of $L_W \subset L$, and
\[ |L'_W| = |L_W| = D^s \times h(W). \]
So the inclusion
\[ |L'_W| = D^s \times h(W) \hookrightarrow D^s \times B = |L'| \]
is a \emph{simplicial} map
\[ L'_W \hookrightarrow L'. \]
By \cite[Thm. 2.1, p. 23]{buonrs} (see also 
\cite[Subdivision theorem, p. 128]{rourke})
mock bundles can be subdivided:
If $\alpha$ is a mock bundle over a ball complex $D$ 
with total space $E(\alpha)$ and projection $p: E(\alpha) \to D$,
and $D'$ is a 
subdivision of $D$, then there exists a mock bundle $\alpha'$ over
$D'$ together with a PL homeomorphism
$\phi: E(\alpha) \stackrel{\cong}{\longrightarrow} 
  E(\alpha')$
which preserves $\alpha$-blocks over $D$, and a homotopy
\[ F: E(\alpha) \times I \longrightarrow |D|=|D'|,~
   F_0 =p,~ F_1 = p' \phi, \]
which respects the $\alpha$-blocks over $D$. 
(Here, $p': E(\alpha')\to |D'|$
is the projection of $\alpha'$.)
Moreover, if $\alpha$ is an embedded mock bundle,
then the subdivision theorem yields
again an embedded mock bundle and the homotopy can be taken to be
an isotopy which is covered by an ambient isotopy.
We apply this to the zero section mock bundle $\eta$ over $A$:
Since $A'$ is a (simplicial) subdivision of $A$, there thus
exists a correspondingly subdivided mock bundle
$\eta'$ over $A'$.
Since $\eta$ is an embedded mock bundle
$i: \theta (X) = E(\eta) \hookrightarrow E(\nu),$
so is $\eta'$.
Thus the projection map $i'$ of $\eta'$ may be taken to be a PL embedding
$i': E(\eta') \hookrightarrow E(\nu).$
As the zero section $i$ does not touch the sphere bundle of $\nu$
(i.e. $\eta$ has empty blocks over $\partial N$), the same
is true for the perturbation $i'$.
There exists a PL homeomorphism
$\phi: \theta (X) = E(\eta) \stackrel{\cong}{\longrightarrow} 
  E(\eta')$
which preserves $\eta$-blocks over the ball complex $A$.
The maps $i$ and $i' \phi$ are isotopic via an isotopy
\[ F: \theta (X) \times I \longrightarrow E(\nu) \times I,~
   F_0 =i,~ F_1 = i' \phi. \]
This isotopy is covered by an ambient isotopy
\[ H: E(\nu) \times I \longrightarrow E(\nu) \times I,~ H_0 = \id, \]
so that
\[ \xymatrix{
\theta (X) \times I \ar[rd]_F \ar[rr]^{F_0 \times \id = i\times \id} & &
  E(\nu) \times I \ar[ld]^H \\
& E(\nu) \times I &
} \]
commutes. This implies
\begin{equation} \label{equ.h1iisiprimephi}
H_1 \circ i = F_1 = i' \circ \phi. 
\end{equation}
By an induction on the cells $\sigma \in K$, starting with
the $0$-dimensional cells, $F$ and $H$ can be constructed to
preserve blocks over $K$. More precisely:
Let $\nu_\sigma$ denote the restriction of $\nu$ to the 
embedded $F$-block
$\theta (\xi (\sigma)) = \theta (X) \cap (D^s \times \sigma).$
Since $\nu$ is a block bundle over the complex $L_\theta$
and $\theta (\xi (\sigma))$ is triangulated by 
$L_\sigma \cap L_\theta$, the total space of $\nu_\sigma$
is given by $E(\nu_\sigma) = \bigcup_\tau \nu (\tau)$,
where $\tau$ ranges over all simplices of $L_\sigma \cap L_\theta$.
Thus by property (5) above,
\begin{equation} \label{equ.enusigmaenucapsigmads}
E(\nu_\sigma) = E(\nu) \cap (D^s \times \sigma). 
\end{equation}
Then $H$ can be inductively arranged to satisfy
\begin{equation} \label{equ.htenusigmaisenusigma}
  H_t (E(\nu_\sigma)) = E(\nu_\sigma)
\end{equation}
for all $\sigma \in K$ and all $t\in [0,1]$, as follows:
Recall that Buoncristiano, Rourke and Sanderson's construction of $H$ in
their proof of the
mock bundle subdivision theorem proceeds inductively 
over cells of the base, starting with the 0-cells.
In the present context, one organizes their induction as follows:
Start with the $0$-skeleton $A^0$ of $A$. 
For every $0$-cell $\sigma^0$ of $K$, subdivide $\eta$ over
$A^0 \cap D^s \times \sigma^0$ within the manifold 
$E(\nu) \cap D^s \times \sigma^0$.
Extend this subdivision for every $1$-cell $\sigma^1$ of $K$ to
a subdivision over $A^0 \cap D^s \times \sigma^1$
within the manifold $E(\nu) \cap D^s \times \sigma^1$. 
Continue in this way with
$2$-cells $\sigma^2$, etc., until all cells of $K$ have been used.
Then move on to the $1$-skeleton $A^1$ of $A$:
For every $0$-cell $\sigma^0$ of $K$,
extend the subdivision to
a subdivision over $A^1 \cap D^s \times \sigma^0$
within the manifold $E(\nu) \cap D^s \times \sigma^0$.
Extend this subdivision for every $1$-cell $\sigma^1$ of $K$ to
a subdivision over $A^1 \cap D^s \times \sigma^1$
within the manifold $E(\nu) \cap D^s \times \sigma^1$, and so on.

The mock bundle $\eta'$ is defined over the simplicial complex
$A'$ with polyhedron $|A'| = E(\nu)$, but
using the canonical inclusions $E(\nu) \subset D^s \times B$
and $E(\nu) \subset \Th (\nu),$ we may regard $\eta'$
as a mock bundle over $D^s \times B$, and as a mock bundle over
$\Th (\nu)$. In more detail, the composition
\[ E(\eta') \stackrel{i'}{\hookrightarrow}
   E(\nu) = |A'| \hookrightarrow D^s \times B =|L'| \]
is the projection of a mock bundle over the complex $L'$,
whose blocks over simplices in $A'$ are the blocks of $\eta'$
and blocks over simplices not in $A'$ are taken to be empty.
(Here, we are using that $\eta'$ has empty blocks over the sphere
bundle $\partial N$.)
Similarly, after extending the triangulation $A'$ to a
triangulation $T'$ of $\Th (\nu)$ by coning off simplices of $A'$
that are in $\partial N$ (and adding the cone point $\infty$ as
a $0$-simplex), the composition
\[ E(\eta') \stackrel{i'}{\hookrightarrow}
   E(\nu) = |A'| \hookrightarrow \Th (\nu) =|T'| \]
is the projection of a mock bundle over the complex $T'$,
whose blocks over simplices in $A'$ are the blocks of $\eta'$
and blocks over simplices not in $A'$ are again taken to be empty.
In view of the commutative diagram
\[ \xymatrix{
E(\eta') \ar@{^{(}->}[d]_{i'} & \\
E(\nu) \ar@{^{(}->}[d] \ar@{^{(}->}[rd] & \\
D^s \times B \ar[r]_{T(\xi)} & \Th (\nu)
} \]
the pullback $T(\xi)^* (\eta'/_{T'})$ under the Umkehr map is
precisely $\eta'/_{L'}$. Therefore, the mock bundle pullback
$f^* (\eta')$ is given by
\[ f^* (\eta') = (\id \times h)^* T(\xi)^* (\eta'/_{T'}) 
               = (\id \times h)^* (\eta'/_{L'}). \]
The mock bundle $\eta'$ (contrary to $\eta$, possibly)
is defined over a \emph{simplicial} complex
$L'$ and, as pointed out above, the inclusion
$D^s \times h(W) \hookrightarrow D^s \times B = |L'|$
is a \emph{simplicial} map
\[ L'_W \hookrightarrow L'. \]
Therefore, the mock bundle pullback
$f^* (\eta') = (\id \times h)^* (\eta')$
is given by the \emph{cartesian} diagram
\[ \xymatrix{
E((\id \times h)^* \eta') \ar[r] \ar[d] \ar[rd]^g 
  & E(\eta') \ar@{^{(}->}[d]^{i'} \\
D^s \times h(W) =|L'_W| \ar@{^{(}->}[r] & D^s \times B = |L'|.
} \]
It follows that the cap product of the BRS Thom class 
with $[f]$ is given by the diagonal arrow
\[ u_{BRS} (\nu) \cap [f] = [g] \in 
    \Omega^\Witt_{n+d} (E(\nu)),  \]
the total space of the pullback is given by
\[ E((\id \times h)^* \eta')
    = (D^s \times h(W)) \cap i' E(\eta')  \]
and $g$ is the subspace inclusion
\[ g: (D^s \times h(W)) \cap i' E(\eta') \subset i' E(\eta') \subset 
  E(\nu). \]
We shall show next that the final stage $H_1: E(\nu) \to E(\nu)$ 
of the ambient isotopy $H$
induces a homeomorphism
\begin{equation} \label{equ.homeoh1onexikw}
H_1: E(\xi|_{K_W}) \stackrel{\cong}{\longrightarrow}  
      (D^s \times h(W)) \cap i' E(\eta'), 
\end{equation}      
where we use $\theta$ to identify $X=E(\xi)$ and
$\theta (X)$, and to identify
$E(\xi|_{K_W})$ and $\theta (X) \cap (D^s \times |K_W|)$.      
The homeomorphism $H_1$ restricts to a homeomorphism
\[
H_1|: \theta (X) \cap (D^s \times |K_W|)
 \stackrel{\cong}{\longrightarrow}
 H_1 (\theta (X) \cap (D^s \times |K_W|)),
\]
whose target we shall now compute:
\begin{align*}
H_1 (\theta (X) \cap (D^s \times |K_W|))
&= H_1 (\theta (X) \cap E(\nu) \cap (D^s \times |K_W|)) \\
&= H_1 (\theta (X)) \cap H_1 (E(\nu) \cap D^s \times |K_W|) \\
&= H_1 (\theta (X)) \cap 
     H_1 \left( E(\nu) \cap \bigcup_{\sigma \in K_W} D^s \times \sigma \right) \\
&= H_1 (\theta (X)) \cap 
     H_1 \left( \bigcup_{\sigma \in K_W} E(\nu) \cap (D^s \times \sigma) \right) \\
&= H_1 (\theta (X)) \cap 
     \bigcup_{\sigma \in K_W} H_1 (E(\nu) \cap (D^s \times \sigma)) \\
&= H_1 (\theta (X)) \cap 
     \bigcup_{\sigma \in K_W} (E(\nu) \cap (D^s \times \sigma))  
        \text{ (by (\ref{equ.enusigmaenucapsigmads}) 
                and (\ref{equ.htenusigmaisenusigma}))}  \\ 
&= H_1 (\theta (X)) \cap E(\nu) \cap
     \bigcup_{\sigma \in K_W} (D^s \times \sigma) \\
&= H_1 (\theta (X)) \cap
     \bigcup_{\sigma \in K_W} (D^s \times \sigma)  \\
&= H_1 i E(\eta) \cap
     \bigcup_{\sigma \in K_W} |L_\sigma|   \\     
&= i' \phi E(\eta) \cap |L_W| 
     \text{ (by (\ref{equ.h1iisiprimephi}))} \\
&= i' E(\eta') \cap (D^s \times h(W)).             
\end{align*}
Thus we obtain the homeomorphism
(\ref{equ.homeoh1onexikw}).
In the diagram
\[ \xymatrix{
E(\xi|_{K_W}) \ar[d]_{H_1|}^\cong
 \ar@{^{(}->}[r] & E(\xi)=\theta (X) \ar@{^{(}->}[r]^i 
  & E(\nu) \ar[d]^{H_1}_\cong \\
i' E(\eta') \cap (D^s \times h(W)) \ar@{^{(}->}[rr]^g
  & & E(\nu),  
} \]
all the horizontal arrows are subspace inclusions and thus the diagramm commutes.
By Lemma \ref{lem.homeoyieldswittbord} applied to the 
PL homeomorphism $H_1|$,
\[ [g] = [g \circ H_1|] \in \Omega^\Witt_{n+d} (E(\nu)). \]
By commutativity of the diagram,
\begin{align*} 
[g \circ H_1|] 
&= [E(\xi|_{K_W}) \subset \theta (X) 
     \stackrel{H_1 i}{\longrightarrow} E(\nu)] \\
&= [E(\xi|_{K_W}) \subset \theta (X) 
     \stackrel{i' \phi}{\longrightarrow} E(\nu)]     
\end{align*}
By restriction, the isotopy $F$ gives rise to an isotopy
\[ \hat{F}: E(\xi|_{K_W}) \times I \subset
     \theta (X) \times I \stackrel{F}{\longrightarrow} 
     E(\nu) \]
from
\[ \hat{F}_0 =
   E(\xi|_{K_W}) \subset \theta (X) 
     \stackrel{F_0 = i}{\longrightarrow} E(\nu)
\]
to
\[ \hat{F}_1 =
   E(\xi|_{K_W}) \subset \theta (X) 
     \stackrel{F_1 = i' \phi}{\longrightarrow} E(\nu).
\]
By Lemma \ref{lem.homeoyieldswittbord},
\[ [\hat{F}_0] = [\hat{F}_1] \in \Omega^\Witt_{n+d} (E(\nu)). \]
Therefore,
\[ [g] = [g \circ H_1|] = [\hat{F}_1] = [\hat{F}_0]
    \in \Omega^\Witt_{n+d} (E(\nu)). \]

Now the geometric pullback transfer of $[h: W \hookrightarrow B]$
is given by
\[ \xi^!_\PB [h:W\hookrightarrow B] = 
  [E(\xi|_{K_W}) \subset E(\xi) = \theta (X)]. \]
Hence
\[ i_* \xi^!_\PB [h:W\hookrightarrow B] = [\hat{F}_0]. \]
Finally, since $i_*$ and $\rho_*$ are inverses of each other,
\begin{align*}
\xi^!_\PB [h:W\hookrightarrow B]
&= \rho_* [\hat{F}_0] = \rho_* [g] 
    = \rho_* (u_{BRS} (\nu) \cap [f]) \\
&= \rho_* (u_{BRS} (\nu) \cap [T(\xi) \circ (\id \times h)]) \\
&= \rho_* (u_{BRS} (\nu) \cap T(\xi)_* \sigma [h]) \\
&= \xi^!_{BRS} [h],
\end{align*}
as was to be shown.
\end{proof}
We will refer to the map $\rho_* (u_{BRS} (\nu)\cap -)$
as the \emph{geometric Thom homomorphism}.

\begin{prop} \label{prop.htpythomhomisgeothom}
The homotopy-theoretic Thom homomorphism $\Phi$ agrees with the geometric
Thom homomorphism, that is, the diagram
\[ \xymatrix{
\widetilde{\MWITT}_{n+s} (\Th (\mu)) \ar[r]^\Phi \ar[d]_{\cong}
& \MWITT_{n+d} (X) \ar[d]^{\cong} \\
\widetilde{\Omega}^\Witt_{n+s} (\Th (\nu)) \ar[r]^{\rho_* (u_{BRS} (\nu) \cap -)} 
& \Omega^\Witt_{n+d} (X)
} \]
commutes.
\end{prop}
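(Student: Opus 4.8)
The plan is to reduce the statement to the identification of Thom classes provided by Lemma~\ref{lem.usplgoestoubrs}, together with the geometric description of the cap product recalled in Section~\ref{sec.bundletransferstable}. First I would observe that the vertical isomorphisms in the square are induced by the natural equivalence of homology theories $\MWITT_*(-)\cong\Omega^\Witt_*(-)$ of \eqref{equ.mwittisomegawitt} (applied to the spaces $X$ and $\Th(\mu)=\Th(\nu)$, the latter equality being the one noted in Section~\ref{sec.umkehrmap}). Since this equivalence is a natural transformation of homology theories, it is compatible with the zero-section isomorphism $\rho_*\colon \MWITT_*(N)\xrightarrow{\cong}\MWITT_*(X)$ that occurs in both $\Phi$ and the geometric Thom homomorphism. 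Hence it suffices to prove that cap product with the $\MSPL$-cohomology Thom class $u_\SPL(\mu)$, regarded on $\widetilde{\MWITT}_{n+s}(\Th(\mu))\cong\MWITT_{n+s}(N,\partial N)$, corresponds under the natural equivalence to cap product with the BRS-Thom class $u_{BRS}(\nu)$ on $\widetilde{\Omega}^\Witt_{n+s}(\Th(\nu))\cong\Omega^\Witt_{n+s}(N,\partial N)$.

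Next I would invoke Lemma~\ref{lem.usplgoestoubrs}: since $\nu=\mu_\PLB$ is the underlying PL disc block bundle of the microbundle $\mu$, the isomorphism $\beta$ of \eqref{equ.identmsplcohbrstq} carries $u_{BRS}(\nu)$ to $u_\SPL(\mu)$. Thus the square in question becomes an instance of the compatibility between the homotopy-theoretic $\MSPL$-module cap product $\cap\colon\MSPL^m(N,\partial N)\otimes\MWITT_n(N,\partial N)\to\MWITT_{n-m}(N)$ and the geometric mock-bundle cap product $\cap\colon\Omega^{-m}_\SPL(N,\partial N)\otimes\Omega^\Witt_n(N,\partial N)\to\Omega^\Witt_{n-m}(N)$ under $\beta$ and the natural equivalence $\MWITT\simeq\Omega^\Witt$; but that compatibility is exactly the geometric description of the cap product recalled, following \cite[II.2]{buonrs}, in Section~\ref{sec.bundletransferstable}, where $([\eta/C],[f\colon W\to|C|])$ is sent to $[E(f^*\eta)\to|C|]$. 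Taking for $\eta$ the zero-section mock bundle representing $u_{BRS}(\nu)$, both Thom homomorphisms are then computed by one and the same geometric pullback, and the square commutes.

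The one point that needs care --- and the only real obstacle --- is that the geometric cap product description presupposes a simplicial base complex and a simplicial map, whereas the zero-section mock bundle $\eta$ representing $u_{BRS}(\nu)$ lives over the ball complex $A$ with $|A|=N$. The remedy is the one already used in the proof of Proposition~\ref{prop.geopullbtransfistpcapu}: pass to a simplicial subdivision $A'$ of $A$ and replace $\eta$ by the subdivided embedded mock bundle $\eta'$ furnished by the mock-bundle subdivision theorem \cite[Thm.~2.1]{buonrs}, noting that this changes neither the class $u_{BRS}(\nu)$ nor the value of the cap product. After this subdivision the cartesian-square description of the pullback applies verbatim, and comparing it with the homotopy-theoretic cap product through $\beta$ finishes the proof. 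All of the conceptual content is already contained in Lemma~\ref{lem.usplgoestoubrs} and the geometric cap product formula; what remains is bookkeeping with subdivisions of the kind carried out in detail for Proposition~\ref{prop.geopullbtransfistpcapu}.
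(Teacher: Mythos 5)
Your proposal is correct and follows essentially the same route as the paper: identify the two Thom classes via Lemma \ref{lem.usplgoestoubrs} applied to $\mu$ with $\mu_\PLB=\nu$, and then invoke the compatibility of the homotopy-theoretic $\MSPL$-module cap product with the Buoncristiano--Rourke--Sanderson cap product under the canonical identifications, together with naturality of $\MWITT_*\cong\Omega^\Witt_*$ and $\rho_*$. The closing discussion of simplicial subdivision is harmless but not needed here; that bookkeeping is only required when one actually evaluates the cap product on cycles, as in Proposition \ref{prop.geopullbtransfistpcapu}.
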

\begin{proof}
Recall that $\Phi$ is given by
$\Phi = \rho_* (u_\SPL (\mu) \cap -)$.
The result follows from Lemma \ref{lem.usplgoestoubrs}
applied to $\mu$ with underlying oriented block bundle $\mu_\PLB = \nu$,
together with the geometric description of the cap product
given in \cite{buonrs}.
\end{proof}

\begin{prop} \label{prop.transfchiischitransf}
Manifold-block bundle transfer on $\MWITT$-homology 
and geometric pullback transfer on Witt bordism agree,
that is, the diagram
\[ \xymatrix{
\MWITT_n (B) \ar[r]^{\xi^!} \ar[d]^\cong 
  & \MWITT_{n+d} (X) \ar[d]_\cong \\
\Omega^\Witt_n (B) \ar[r]^{\xi^!_\PB} & \Omega^\Witt_{n+d} (X)
} \]
commutes.
\end{prop}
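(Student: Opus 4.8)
The plan is to decompose the asserted equality into two comparisons that have already been established in the preceding propositions, linked by the natural equivalence $\MWITT_*(-) \cong \Omega^\Witt_*(-)$ of (\ref{equ.mwittisomegawitt}). Recall that on $\MWITT$-homology the block bundle transfer $\xi^!$ is the composite $\Phi \circ T(\xi)_* \circ \sigma$ with $\Phi = \rho_* (u_\SPL (\mu) \cap -)$, while on Witt bordism the intermediary transfer is $\xi^!_{BRS} [g] = \rho_* (u_{BRS} (\nu) \cap T(\xi)_* \sigma [g])$, built from the BRS-Thom class of the underlying disc block bundle $\nu$ of $\mu$.

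First I would note that the suspension isomorphism $\sigma$ and the homomorphism $T(\xi)_*$ induced by the Umkehr map are each either part of the structural data of a homology theory or induced by a map of spaces, and hence commute with the natural transformation $\MWITT_*(-) \stackrel{\cong}{\to} \Omega^\Witt_*(-)$. Therefore, under this identification, the first two arrows in the definition of $\xi^!$ are carried to the arrows $\sigma$ and $T(\xi)_*$ occurring in the definition of $\xi^!_{BRS}$. Next, Proposition \ref{prop.htpythomhomisgeothom} identifies the homotopy-theoretic Thom homomorphism $\Phi$ on $\MWITT$-homology with the geometric Thom homomorphism $\rho_* (u_{BRS} (\nu) \cap -)$ on Witt bordism, again via (\ref{equ.mwittisomegawitt}). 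Splicing these two facts into a single commutative square shows that $\xi^!$ on $\MWITT_*(-)$ corresponds, under the equivalence $\MWITT_*(-) \cong \Omega^\Witt_*(-)$, to $\xi^!_{BRS}$ on $\Omega^\Witt_*(-)$.

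Finally, Proposition \ref{prop.geopullbtransfistpcapu} already supplies the equality $\xi^!_\PB = \xi^!_{BRS}$ on Witt bordism. Chaining this with the previous step gives the commutativity of the square in the statement, completing the argument. The only point requiring care is purely bookkeeping: one must keep straight the several instances of $\rho_*$ (the inverse of the zero-section inclusion, appearing for both homology theories) and match the cap product with $u_\SPL (\mu)$ against the cap product with $u_{BRS} (\nu)$; but this matching is exactly the content of Lemma \ref{lem.usplgoestoubrs} together with the geometric description of the cap product recalled in Section \ref{sec.thomhomomorphism} and used in Proposition \ref{prop.htpythomhomisgeothom}, so no new difficulty arises. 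In short, the proof is a brief assembly of Propositions \ref{prop.htpythomhomisgeothom} and \ref{prop.geopullbtransfistpcapu}.
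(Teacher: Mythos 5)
Your argument is correct and is essentially the paper's own proof: it decomposes the square using the definition of $\xi^!$ as $\Phi \circ T(\xi)_* \circ \sigma$, naturality of the equivalence $\MWITT_*(-) \cong \Omega^\Witt_*(-)$ against $\sigma$ and $T(\xi)_*$, Proposition \ref{prop.htpythomhomisgeothom} to match $\Phi$ with the geometric Thom homomorphism $\rho_*(u_{BRS}(\nu)\cap -)$, and Proposition \ref{prop.geopullbtransfistpcapu} for $\xi^!_\PB = \xi^!_{BRS}$. No gaps; the bookkeeping remark about $u_\SPL(\mu)$ versus $u_{BRS}(\nu)$ is exactly the content already delegated to Proposition \ref{prop.htpythomhomisgeothom} via Lemma \ref{lem.usplgoestoubrs}.
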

\begin{proof}
We must show that the outer square of the diagram
\[ \xymatrix@R=30pt@C=30pt{
\MWITT_n (B) \ar[rr]^{\xi^!} \ar[rd]^{T(\xi)_* \sigma} \ar[ddd]^{\cong} &
  & \MWITT_{n+d} (X) \ar[ddd]_{\cong} \\
  & \widetilde{\MWITT}_{n+s} (\Th (\mu)) \ar[d]^{\cong} 
                 \ar[ru]^{\Phi} & \\
  & \widetilde{\Omega}^\Witt_{n+s} (\Th (\nu)) 
        \ar[rd]^{\rho_* (u_{BRS} (\nu) \cap -)} & \\
\Omega^\Witt_n (B) \ar[rr]^{\xi^!_\PB} \ar[ru]^{T(\xi)_* \sigma} & & 
  \Omega^\Witt_{n+d} (X) 
} \]
commutes. The upper part commutes by definition of the $F$-block bundle
transfer $\xi^!$. The left hand part commutes as the vertical arrows are
given by a natural isomorphism of homology theories, while the right hand part
commutes by Proposition \ref{prop.htpythomhomisgeothom}.
The lower part of the diagram, involving the pullback transfer $\xi^!_\PB$,
commutes according to Proposition \ref{prop.geopullbtransfistpcapu}.
\end{proof}

A closed $n$-dimensional Witt space $W$ has a \emph{fundamental class}
\[ [W]_\Witt \in \Omega^\Witt_n (W) \]
in Witt bordism represented by the identity map,
$[W]_\Witt = [\id: W\to W]$.
Under the natural identification (\ref{equ.mwittisomegawitt}),
this class corresponds to a unique class
$[W]_\Witt \in \MWITT_n (W).$

\begin{prop} \label{prop.transferofwittfundclass}
Suppose $B$ is a closed Witt space of dimension $n$. 
Then the total space $X$ of the oriented $F$-block bundle $\xi$
over $B$ is a closed Witt space and
the geometric pullback transfer 
\[ \xi^!_\PB:  \Omega^\Witt_n (B) \longrightarrow
         \Omega^\Witt_{n+d} (X) \]
maps the Witt fundamental class of $B$ to 
the Witt fundamental class of $X$,
\[ \xi^!_\PB [B]_\Witt = [X]_\Witt. \]         
\end{prop}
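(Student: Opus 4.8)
The plan is to read off both assertions directly from the geometric definition of the pullback transfer, applied to the identity map of $B$, using the most economical choices available in Casson's pullback construction.

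That $X$ is a closed Witt space is the Witt-space part of the statement: an oriented $F$-block bundle is in particular an oriented $d$-mock bundle over a ball complex $K$ with $B=|K|$, so, since $B$ is a closed ($\partial B=\varnothing$) Witt space, Lemma \ref{lem.mockbundleoverwittiswitt} shows that its total space $X=E(\xi)$ is a closed $(n+d)$-dimensional Witt space. Equivalently, one may invoke Lemma \ref{lem.fiberprodiswitt} for the PL map $\id_B\colon B\to B$, noting $E(\id_B^*\xi)=E(\xi)=X$. Here one orients $K$ so that the sum of its top-dimensional balls is a cycle --- possible because $B$ is an oriented pseudomanifold --- and the orientation of $\xi$ as an $F$-block bundle provides the blockwise orientations with the incidence-number compatibility that Lemma \ref{lem.mockbundleoverwittiswitt} requires.

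For the transfer formula, recall that $[B]_\Witt=[\id_B\colon B\to B]\in\Omega^\Witt_n(B)$ and that $\id_B$ is already PL, hence usable as the PL representative in the definition of $\xi^!_\PB$. To form the Casson pullback of $\xi$ along $\id_B$ I would take the trivial factorization $B\hookrightarrow B\times\pt\xrightarrow{\pro_1}B$ with $V=\pt$. Then $\pro^*_1\xi=\xi\times L$ over $K\times L=K$ with $L$ a single $0$-cell, so $\pro^*_1\xi=\xi$, its total space is $X\times\pt=X$, and the factor projection $E(\pro^*_1\xi)=X\times\pt\to X$ is $\id_X$. Since $j(B)=|K|$, no subdivision is needed: take $C'=C=K$ and $D'=K$; restricting $\eta'=\xi$ to $D'$ returns $\xi$ with total space $X$, so the composition $E(\eta'|_{D'})=X\hookrightarrow E(\eta)=X\to X$ defining $\overline{\id_B}$ is the identity $\id_X$, and $\id_B^*\xi=j^*\eta=\xi$. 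Therefore
\[
\xi^!_\PB[B]_\Witt=\xi^!_\PB[\id_B\colon B\to B]
 =[\overline{\id_B}\colon E(\id_B^*\xi)\to X]=[\id_X\colon X\to X]=[X]_\Witt .
\]

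The argument is essentially bookkeeping: one traces Casson's pullback of an $F$-block bundle and checks that, on the identity input and with the trivial choices, every step --- pullback, subdivision, restriction to a subcomplex, composition with the factor projection --- is vacuous, so that $\overline{\id_B}$ is literally $\id_X$. The only place needing a touch of care is the orientation bookkeeping for the first assertion, i.e. confirming that the oriented $F$-block bundle over the oriented Witt space $B$ is an oriented mock bundle over an appropriately oriented ball complex so that Lemma \ref{lem.mockbundleoverwittiswitt} applies; this is immediate from the definitions. Even with a less economical factorization, for which one would only conclude that $\overline{\id_B}$ is PL-homeomorphic over $X$ to $\id_X$, Lemma \ref{lem.homeoyieldswittbord} would still give $[\overline{\id_B}]=[X]_\Witt$.
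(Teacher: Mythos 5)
Your proposal is correct and follows essentially the same route as the paper: the Witt property of $X$ comes from Lemma \ref{lem.fiberprodiswitt} (itself a case of Lemma \ref{lem.mockbundleoverwittiswitt}), and the transfer formula is obtained by evaluating the pullback transfer on the PL representative $\id_B$ of $[B]_\Witt$, where the Casson pullback construction degenerates so that $\overline{\id_B}=\id_X$. The paper's proof is just a terser version of your bookkeeping; your extra remarks on the trivial factorization and on invoking Lemma \ref{lem.homeoyieldswittbord} for non-minimal choices only make explicit what the paper leaves implicit.
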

\begin{proof}
If the base $B$ is Witt, then the total space $X$ is Witt
by Lemma \ref{lem.fiberprodiswitt}.
The Witt fundamental class $[B]_\Witt$ is represented by
the identity map $g=\id_B: B\to B$ (which is PL).
Pulling back under this identity map, 
the map $\overline{g}: E(\id^* \xi) \to X$
is the identity $\id: E(\id^* \xi) =X \to X$.
Therefore,
\[ \xi^!_\PB [\id: B\to B] = [\overline{g}: E(\id^* \xi) \to X] 
  = [\id_X] = [X]_\Witt. \]
\end{proof}

\begin{example} \label{exple.trivialxitransferwittbord}
We continue our previous examples on
the trivial $F$-block bundle $\xi$
with total space $X=F \times B$, $B$ any compact polyhedron.
The geometric pullback transfer 
$\xi^!_\PB:  \Omega^\Witt_n (B) \to \Omega^\Witt_{n+d} (F \times B)$
is then by construction 
$\xi^!_\PB [g:W\to B] = [\id_F \times g: F \times W \to F \times B].$
The Witt bordism $\times$-product
\[ \times: \Omega^\Witt_d (F) \times \Omega^\Witt_n (B)
      \longrightarrow \Omega^\Witt_{d+n} (F \times B),~ [h] \times [g] = [h\times g], \]
can be used to decompose the class $[\id_F \times g]$ as
$[F]_\Witt \times [g]$. We thus find that
\[ \xi^!_\PB [g] = [F]_\Witt \times [g], \]
which agrees with (\ref{equ.transfertrivialxiisproduct}). 
If $B=W$ is an $n$-dimensional closed Witt space and
$g$ the identity, then 
$\xi^!_\PB [B]_\Witt = [F]_\Witt \times [B]_\Witt = [F\times B]_\Witt$, 
in agreement with Proposition \ref{prop.transferofwittfundclass}.
\end{example}

\section{Transfer of the $\syml$-Homology Fundamental Class}

In \cite{blm}, we constructed a canonical 
\emph{$\syml (\rat)$-homology fundamental class}
\[ [X]_\mbl \in \syml (\rat)_n (X) \]
for closed $n$-dimensional Witt spaces $X$
using the morphism $\tau:\MWITT \to \syml (\rat)$ of ring spectra.   
This class is the image of the Witt theory
fundamental class $[X]_{\Witt}$ under
the map
\[ \tau_*: \Omega^\Witt_n (X) \cong \MWITT_n (X) \longrightarrow
   \syml (\rat)_n (X), \]
i.e. $[X]_\mbl = \tau_* [X]_{\Witt}.$

\begin{thm} \label{prop.gysinpreserveslhomfundclass}
Suppose $B$ is a closed Witt space of dimension $n$. 
Then the total space $X$ of the oriented $F$-block bundle $\xi$ over $B$
is a closed Witt space and the block bundle transfer 
\[ \xi^!:  \syml (\rat)_n (B) \longrightarrow
         \syml (\rat)_{n+d} (X) \]
maps the $\syml (\rat)$-homology fundamental class of $B$ to the
$\syml (\rat)$-homology fundamental class of $X$,
\[ \xi^! [B]_\mathbb{L} = [X]_\mathbb{L}. \]         
\end{thm}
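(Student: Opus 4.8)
The plan is to reduce the statement to the compatibility results already established, by a short diagram chase that exploits the fact that both $[B]_\mbl$ and $[X]_\mbl$ are \emph{defined} as $\tau_*$-images of Witt bordism fundamental classes. First I would dispose of the assertion that $X$ is a closed Witt space: apply Lemma~\ref{lem.fiberprodiswitt} to the PL map $\id_B\colon B\to B$, whose associated pullback block bundle is $\xi$ itself with total space $X$ and empty boundary; alternatively this is already recorded in Proposition~\ref{prop.transferofwittfundclass}. Hence $[X]_\Witt=[\id_X]\in\Omega^\Witt_{n+d}(X)$ and $[X]_\mbl=\tau_*[X]_\Witt\in\syml(\rat)_{n+d}(X)$ are defined.

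The main computation is then the chain of equalities
\[
\xi^![B]_\mbl \;=\; \xi^!\tau_*[B]_\Witt \;=\; \tau_*\xi^![B]_\Witt
 \;=\; \tau_*\xi^!_\PB[B]_\Witt \;=\; \tau_*[X]_\Witt \;=\; [X]_\mbl .
\]
Here the first and last equalities are the definitions of the $\syml(\rat)$-homology fundamental classes; the second equality is the commutativity of the square in Proposition~\ref{prop.transftaucomm}, whose top row is the block bundle transfer $\xi^!$ on $\MWITT$-homology; the third equality is Proposition~\ref{prop.transfchiischitransf}, which under the natural identification $\MWITT_*(-)\cong\Omega^\Witt_*(-)$ identifies this transfer with the geometric pullback transfer $\xi^!_\PB$ on Witt bordism; and the fourth equality is Proposition~\ref{prop.transferofwittfundclass}, which computes $\xi^!_\PB[B]_\Witt=[X]_\Witt$ directly, since pulling $\xi$ back along $\id_B$ yields the identity map of $X$.

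I do not expect a genuine obstacle at this stage: the substantial work has already been done in Propositions~\ref{prop.geopullbtransfistpcapu}, \ref{prop.htpythomhomisgeothom} and \ref{prop.transfchiischitransf}, which together show that the homotopy-theoretic Thom-homomorphism transfer, the BRS transfer, and the geometric pullback transfer all agree on Witt bordism. The only point demanding a little care is the bookkeeping of the several natural isomorphisms $\MWITT_*(-)\cong\Omega^\Witt_*(-)$ and the check that they carry $[B]_\Witt$, $[X]_\Witt$ and the transfer maps to their Witt-bordism counterparts — but this is precisely how $[-]_\mbl$ and the transfer on $\MWITT$-homology were set up, so it amounts to unwinding definitions rather than proving anything new.
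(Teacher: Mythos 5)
Your proposal is correct and follows essentially the same route as the paper: it combines Proposition~\ref{prop.transferofwittfundclass} ($\xi^!_\PB [B]_\Witt = [X]_\Witt$), Proposition~\ref{prop.transfchiischitransf} (identifying $\xi^!$ with $\xi^!_\PB$ on Witt bordism), and Proposition~\ref{prop.transftaucomm} (compatibility with $\tau_*$), exactly as the paper's proof does. The Witt property of $X$ via Lemma~\ref{lem.fiberprodiswitt} is likewise how the paper handles it, so nothing further is needed.
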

\begin{proof}
By Proposition \ref{prop.transferofwittfundclass},
$\xi^!_\PB [B]_\Witt = [X]_\Witt$
for the pullback transfer.
Thus, using Proposition \ref{prop.transfchiischitransf} 
on the compatibility of
block bundle transfer and pullback transfer,
\[ \xi^! [B]_\Witt 
    = \xi^!_\PB [B]_\Witt = [X]_\Witt. \]
Finally, by Proposition \ref{prop.transftaucomm},
\[ \xi^! [B]_\mbl = \xi^! \tau_* [B]_\Witt =
  \tau_* \xi^! [B]_\Witt = \tau_* [X]_\Witt
   = [X]_\mbl. 
\]
\end{proof}

\begin{example} \label{exple.trivialxitransferlhomology}
We describe the $\syml (\rat)$-homology transfer and illustrate
Theorem \ref{prop.gysinpreserveslhomfundclass} for
the trivial $F$-block bundle $\xi$ with total space $X=F \times B$.
We use the notation of the earlier examples on this special case.
By Lemma \ref{lem.umsotoumbl},
$u_\mathbb{L} (\mu_F) = \tau \phi_W (u_\SPL (\mu_F))$.
Hence, using \cite[p. 552, Prop. 7.1.2]{ranickiesitats},
\[ [F]_\mathbb{L} = \rho_{F*} 
   (\tau \phi_W (u_\SPL (\mu_F)) \cap [\Th \mu_F]_\mathbb{L}) 
     \in \syml (\rat)_d (F), \]
see also \cite[p. 186, Prop. 16.16 (c)]{ranickialtm}.
Consequently, Formula (\ref{equ.transfertrivialxiisproduct}) 
applies to yield the description
\[ \xi^! (a) = [F]_\mathbb{L} \times a \]
for the transfer $\xi^!: \syml (\rat)_n (B) \to \syml (\rat)_{n+d} (F\times B)$.
When $B$ is a closed $n$-dimensional Witt space,
we obtain
\[ \xi^! [B]_\mathbb{L} = [F]_\mathbb{L} \times [B]_\mathbb{L}
  = [F\times B]_\mathbb{L} \]
(where the second equality has been established in
\cite[Theorem 13.1]{blm}), in agreement with 
Theorem \ref{prop.gysinpreserveslhomfundclass}.
\end{example}

\section{Behavior of the Cheeger-Goresky-MacPherson L-class Under Transfer}

Rationally, Theorem \ref{prop.gysinpreserveslhomfundclass}
leads to a formula that describes the behavior of the
Cheeger-Goresky-MacPherson $L$-class under block bundle transfer.

\begin{thm}  \label{thm.lclassbundletransfer}
Let $B$ be a closed Witt space
and let $F$ be a closed oriented PL manifold.
Let $\xi$ be an oriented PL $F$-block bundle over $B$
with total space $X$ and
oriented stable vertical normal microbundle $\mu$ over $X$.
Then the associated block bundle transfer $\xi^!$ sends the
Cheeger-Goresky-MacPherson $L$-class of $B$ to the product
\[ \xi^! L_* (B) = L^* (\mu) \cap L_* (X). \]
\end{thm}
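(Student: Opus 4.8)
The plan is to deduce this rational statement from the integral-coefficient Theorem~\ref{prop.gysinpreserveslhomfundclass} by identifying the relevant $L$-classes with the images of $\syml(\rat)$-homology fundamental classes under a coefficient map, and then tracking the explicit cap-product formula for the transfer through this identification. Recall first the standard fact (Goresky--MacPherson, Siegel, and in the spectrum-level form of Banagl--Laures--McClure) that for a closed Witt space $Z$ the Cheeger--Goresky--MacPherson--Siegel $L$-class $L_*(Z)\in H_*(Z;\rat)$ is, up to the usual normalization, the image of $[Z]_\mbl\in\syml(\rat)_*(Z)$ under the map induced by the ring morphism $\syml(\rat)\to \bigvee_k \Sigma^{4k}H\rat$ arising from the rational homotopy splitting of $\syml$; write this map on homology as $\ell_*\colon \syml(\rat)_*(Z)\to H_*(Z;\rat)$. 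The naturality of $\ell_*$ in $Z$ is the only structural input needed.

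First I would set up the cap-product description of $\xi^!$ on $\syml(\rat)$-homology already recorded in Section~\ref{sec.bundletransferstable}, namely
\[
\xi^! = \rho_* \bigl( u_\mbl(\mu) \cap T(\xi)_*\sigma(-)\bigr)
  \colon \syml(\rat)_n(B)\longrightarrow \syml(\rat)_{n+d}(X),
\]
and observe that the cohomological $L$-class $L^*(\mu)$ is, under the analogous rationalization of the $\syml$-cohomology Thom class, the image of $u_\mbl(\mu)$: this is precisely Ranicki's identification of $L^*$ with the image of the symmetric $L$-theory orientation, combined with Lemma~\ref{lem.umsotoumbl}. Then I would apply $\ell_*$ to both sides of the identity $\xi^![B]_\mbl=[X]_\mbl$ of Theorem~\ref{prop.gysinpreserveslhomfundclass}. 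On the left, naturality of $\ell_*$ with respect to the Umkehr map and the suspension isomorphism, together with the compatibility of $\ell_*$ with cap products against rationalized cobordism/$\syml$-cohomology classes (Lemma~\ref{lem.caponemodmor} applied to the $\MSPL$-module morphism underlying $\ell$, or equivalently Lemma~\ref{lem.tderingmorcapprods}), converts $\ell_*\xi^![B]_\mbl$ into $\rho_*\bigl(L^*(\mu)\cap T(\xi)_*\sigma(\ell_*[B]_\mbl)\bigr)$. On the right, naturality gives $\ell_*[X]_\mbl = L_*(X)$ up to normalization. Since $\ell_*[B]_\mbl = L_*(B)$, the right-hand ordinary-homology transfer $\rho_*(L^*(\mu)\cap T(\xi)_*\sigma(-))$ is exactly the ordinary-homology block bundle transfer $\xi^!$ described in the subsection ``Block Transfer on Ordinary Homology'', after one checks that $L^*(\mu)$ and $u_\intg(\mu)$ (hence $u_\SPL(\mu)$) are intertwined by $\ell$. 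Thus $\xi^!L_*(B) = \ell_*\xi^![B]_\mbl = \ell_*[X]_\mbl = L^*(\mu)\cap L_*(X)$, where in the last two expressions I have silently absorbed the normalization constants, which match because $\ell_*$ carries the $\syml(\rat)$-cohomology Thom class of $\mu$ to $L^*(\mu)$ with the \emph{same} normalization that it carries $[Z]_\mbl$ to $L_*(Z)$ — this is the internal consistency of Ranicki's $L$-class package.

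The main obstacle is the bookkeeping of normalization constants and the precise comparison of cap products under the rationalizing ring map $\ell\colon\syml(\rat)\to\bigvee_k\Sigma^{4k}H\rat$: one must verify that $\ell$ is genuinely a ring map (so that the cap-product naturality Lemma~\ref{lem.caponemodmor}/Lemma~\ref{lem.tderingmorcapprods} applies and $\ell_*$ of a Thom class is again a Thom class in the target), and that the resulting cohomology Thom class of $\mu$ in $\bigvee_k\Sigma^{4k}H\rat$-theory is $L^*(\mu)$ up to the correct power of $8$'s. For the latter I would cite Ranicki~\cite{ranickialtm} (the symmetric $L$-theory orientation of a topological block bundle and its image under the signature/$L$-class map) together with the fact (used already in Section~\ref{sec.thomhomomorphism}) that every $H\intg$-orientable PL bundle is $\MSPL$-orientable and the orientations are compatible. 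Once this identification is in place, the proof is a diagram chase: apply $\ell_*$ to the commuting diagram defining $\xi^!$ on $\syml(\rat)$-homology, use naturality in the three constituent maps (suspension, Umkehr, Thom homomorphism), and read off the displayed formula. I would close by remarking that since $L^*(\mu)$ is a unit in $H^*(X;\rat)$, the formula also expresses $L_*(X)$ as $L^*(\mu)^{-1}\cap\xi^!L_*(B)$, recovering the computational statement advertised in the introduction.
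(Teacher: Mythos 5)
Your overall strategy --- rationalize the fundamental-class identity $\xi^![B]_\mbl=[X]_\mbl$ of Theorem \ref{prop.gysinpreserveslhomfundclass} through the cap-product description of the transfer --- is the same as the paper's, but your key computational input is wrong, and the error sits exactly where the content of the theorem lies. You assert that under the rationalization $\syml(\rat)\to\bigvee_j S^{4j}H\rat$ the class $u_\mbl(\mu)$ is carried (essentially) to $L^*(\mu)$ times the ordinary Thom class, so that $\ell_*$ of the $\syml(\rat)$-homology transfer becomes the ordinary-homology transfer. Ranicki's actual statement (\cite[Remark 16.2, p.~176]{ranickialtm}, which is the input the paper uses) is
\[ u_\mbl(\mu)\otimes\rat \;=\; \rho^* L^*(\mu)^{-1}\cup u_\rat(\mu), \]
with the \emph{inverse} $L$-class; in particular $u_\mbl(\mu)\otimes\rat$ is \emph{not} the ordinary rational Thom class, and the $\syml(\rat)$-homology transfer does \emph{not} rationalize to the ordinary-homology transfer. (The paper emphasizes exactly this in the example following the theorem: in general $\xi^!_\rat(-\otimes\rat)\neq\xi^!_\mbl(-)\otimes\rat$.) The discrepancy between the two transfers is cap product with the invertible class $L^*(\mu)^{-1}$, and that is precisely where the factor $L^*(\mu)$ in the statement comes from; it is not a ``normalization constant'' that can be silently absorbed.

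Concretely, with the correct identification one computes
\begin{align*}
L_*(X) &= [X]_\mbl\otimes\rat = (\xi^![B]_\mbl)\otimes\rat
 = \rho_*\bigl((\rho^*L^*(\mu)^{-1}\cup u_\rat(\mu))\cap T(\xi)_*\sigma\, L_*(B)\bigr)\\
 &= L^*(\mu)^{-1}\cap \rho_*\bigl(u_\rat(\mu)\cap T(\xi)_*\sigma\, L_*(B)\bigr)
 = L^*(\mu)^{-1}\cap \xi^! L_*(B),
\end{align*}
which rearranges to the claimed formula. Your own chain of equalities is internally inconsistent: you first state $\ell_*[X]_\mbl=L_*(X)$ but conclude with $\ell_*[X]_\mbl=L^*(\mu)\cap L_*(X)$; and had you followed your stated identification of $\ell(u_\mbl(\mu))$ with $L^*(\mu)$ consistently, you would have arrived at $\xi^!L_*(B)=L^*(\mu)^{-1}\cap L_*(X)$, which is not the theorem. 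Moreover, your intermediate expression $\rho_*(L^*(\mu)\cap T(\xi)_*\sigma(\ell_*[B]_\mbl))$ does not typecheck: $T(\xi)_*\sigma(\cdot)$ lies in the reduced homology of $\Th(\mu)$ and must be capped with a class in $\redh^{\,s-d}(\Th(\mu);\rat)$, not with $L^*(\mu)\in H^*(X;\rat)$; the correct class is $\rho^*L^*(\mu)^{-1}\cup u_\rat(\mu)$. Once you replace your identification by Ranicki's and pull $\rho^*L^*(\mu)^{-1}$ out via the projection formula, the argument closes and coincides with the paper's proof.
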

\begin{proof}
By Theorem \ref{prop.gysinpreserveslhomfundclass},
the $\syml$-homology transfer $\xi^!$ of $\xi$
sends the $\syml (\rat)$-homology fundamental class of $B$ to the 
$\syml (\rat)$-homology fundamental class of $X$:
$\xi^! [B]_\mbl = [X]_\mbl.$
It remains to analyze what this equation means after we tensor with
$\rat$, i.e. after we apply the localization morphism
\[
\syml (\rat) \longrightarrow \syml (\rat)_{(0)}
 = \bigvee_i S^i H(L^i (\rat)\otimes \rat)
 = \bigvee_j S^{4j} H\rat,
\]
which is a ring morphism of ring spectra.
By \cite[Lemma 11.1]{blm},
\[ [B]_\mbl \otimes \rat = L_* (B),~
   [X]_\mbl \otimes \rat = L_* (X). \]
Using Ranicki's \cite[Remark 16.2, p. 176]{ranickialtm},
\[ u_\mbl (\mu) \otimes \rat 
  = \rho^* L^* (\mu)^{-1} \cup u_\rat (\mu), \]
where $u_\rat (\mu) \in \redh^{s-d} (\Th (\mu);\rat)$ is the Thom class of $\mu$
in ordinary rational cohomology.   
(Note that Ranicki omits cupping with $u_\rat (\mu)$ in his notation.)
Thus
\begin{align*}
L_* (X)
&= [X]_\mbl \otimes \rat 
    = (\xi^! [B]_\mbl) \otimes \rat \\
&= \rho_* (u_\mbl (\mu) \cap T(\xi)_* \sigma [B]_\mbl) \otimes \rat \\
&= \rho_* (u_\mbl (\mu)\otimes \rat \cap T(\xi)_* \sigma ([B]_\mbl \otimes \rat)) \\
&= \rho_* ((\rho^* L^* (\mu)^{-1} \cup u_\rat (\mu)) \cap T(\xi)_* \sigma L_* (B)) \\
&= \rho_* (\rho^* L^* (\mu)^{-1} \cap (u_\rat (\mu) \cap T(\xi)_* \sigma L_* (B))) \\
&= L^* (\mu)^{-1} \cap \rho_* (u_\rat (\mu) \cap T(\xi)_* \sigma L_* (B)) \\
&= L^* (\mu)^{-1} \cap \xi^! L_* (B).
\end{align*}
\end{proof}
If $t$ is a stable inverse for $\mu$, then $t$ has the interpretation
of a stable vertical tangent bundle for $\xi$, and 
by Theorem \ref{thm.lclassbundletransfer}, the formula
\[ \xi^! L_* (B) = L^* (t)^{-1} \cap L_* (X) \]
holds.

\begin{example}
We discuss Theorem \ref{thm.lclassbundletransfer}
vis-\`a-vis Formula (\ref{equ.transfertrivialxiisproduct}) 
in the situation of a trivial $F$-block bundle $\xi$ over $B$, using
the notation of earlier examples on this case.
Let $[F]_\rat \in H_d (F;\rat)$ denote the fundamental class of the oriented
PL manifold $F$ in ordinary rational homology.
By (\ref{equ.transfertrivialxiisproduct}),
\[ \xi^! (a) = [F]_\rat \times a \]
for $a\in H_n (B;\rat)$.
For a closed Witt space $B$, we obtain in particular 
\begin{equation} \label{equ.transflbislbtimesfrat}
\xi^! L_* (B) = [F]_\rat \times L_* (B). 
\end{equation}
Let $TF$ denote the tangent PL microbundle of the PL manifold $F$.
Then $\mu_F \oplus TF$ is the trivial microbundle and hence
$L^* (\mu_F) L^* (TF) = L^* (\mu_F \oplus TF) = 1$.
Furthermore, the Hirzebruch signature theorem holds for PL manifolds
and $L_* (F) = L^* (TF) \cap [F]_\rat$
(see Madsen-Milgram \cite[Chapter 4C]{madsenmilgram} and 
Thom \cite{thom}).
According to Theorem \ref{thm.lclassbundletransfer},
\begin{align*} 
\xi^! L_* (B) 
&= L^* (\mu) \cap L_* (X) \\
&= (L^* (\mu_F) \times 1) \cap (L_* (F) \times L_* (B)) \\ 
&= (L^* (\mu_F) \cap L_* (F)) \times (1 \cap L_* (B)) \\
&= (L^* (\mu_F) \cap L^* (TF) \cap [F]_\rat) \times L_* (B) \\
&= [F]_\rat \times L_* (B), \\
\end{align*}
confirming (\ref{equ.transflbislbtimesfrat}).
It is perhaps worthwhile to emphasize that transfer does not in general commute with
localization of spectra: 
If $\xi^!_\rat$ denotes the transfer on ordinary rational homology and
$\xi^!_\mbl$ the transfer on $\syml (\rat)$-homology, then
generally $\xi^!_\rat (- \otimes \rat) \not=
  \xi^!_\mbl (-) \otimes \rat$. For example,
\[ \xi^!_\rat ([B]_\mbl \otimes \rat) = \xi^!_\rat (L_* (B)) 
  = [F]_\rat \times L_* (B), \] 
which contains less information than
\[ (\xi^!_\mbl [B]_\mbl) \otimes \rat =
  [F\times B]_\mbl \otimes \rat = L_* (F\times B) = L_* (F) \times L_* (B). \]
\end{example}

\section{Normally Nonsingular Maps}
\label{sec.normnonsingmaps}

Let $f:Y\to X$ be a PL map of closed Witt spaces which is the
composition
\[ \xymatrix{
Y \ar@{^{(}->}[r]^g \ar[rd]_f& Z \ar[d]^p \\
& X
} \]
of an oriented normally nonsingular inclusion $g$ with normal bundle $\nu_g$
followed by the projection $p$ of an oriented PL $F$-fiber bundle $\xi$
with closed PL manifold fiber $F$ and stable vertical normal bundle $\nu_\xi$.
Then $f$ is a \emph{normally nonsingular map} in the sense
of \cite[Def. 5.4.3]{gmih2}. Let $c$ be the codimension of $g$ and
$d$ the dimension of $F$.
The bundle transfer $\xi^!$ and the Gysin restriction $g^!$ compose to
give a transfer homomorphism
\[ H_n (X;\rat) \stackrel{\xi^!}{\longrightarrow}
 H_{n+d} (Z;\rat) \stackrel{g^!}{\longrightarrow}
 H_{n+d-c} (Y;\rat), \]
with $c-d$ the \emph{relative dimension} of $f$.
Combining Theorem \ref{thm.lclassbundletransfer} of the present paper with
Theorem 3.18 of \cite{banaglnyjm}, we obtain
\begin{align*}
g^! \xi^! L_* (X)
&= g^! (L^* (\nu_\xi) \cap L_* (Z)) = g^* L^* (\nu_\xi) \cap g^! L_* (Z) \\
&= g^* L^* (\nu_\xi) \cap (L^* (\nu_g) \cap L_* (Y)) \\
&= L^* (g^* \nu_\xi \oplus \nu_g) \cap L_* (Y),
\end{align*}
at least when $Y,Z$ have even dimensions.

\end{document}